\theoremstyle{definition}
\newtheorem{thm}{Theorem}[section]
\newtheorem{Cor}[thm]{Corollary}
\newtheorem{prop}[thm]{Proposition}
\newtheorem{lemma}[thm]{Lemma}
\newtheorem{ex}[thm]{Example}
\newtheorem*{remark}{Remark}
\newcommand{\lr}[1]{\langle #1 \rangle}
\journal{Journal of Algebra}
\begin{document}

\begin{frontmatter}

\title{Modular Centralizer Algebras Corresponding to $p$-Groups}
\author{Adam Allan}
\address{Department of Mathematics, University of Chicago, 5734 S. University Avenue, Chicago, IL 60637, USA.}

\begin{abstract}
We study the Loewy structure of the centralizer algebra $kP^Q$ for $P$ a $p$-group with subgroup $Q$ and $k$ a field of characteristic $p$. Here $kP^Q$ is a special type of Hecke algebra. The main tool we employ is the decomposition $kP^Q = kC_P(Q) \ltimes I$ of $kP^Q$ as a split extension of a nilpotent ideal $I$ by the group algebra $kC_P(Q)$. We compute the Loewy structure for several classes of groups, investigate the symmetry of the Loewy series, and give upper and lower bounds on the Loewy length of $kP^Q$. Several of these results were discovered through the use of MAGMA, especially the general pattern for most of our computations. As a final application of the decomposition, we determine the representation type of $kP^Q$.
\end{abstract}

\begin{keyword}
centralizer algebra \sep Hecke algebra \sep Loewy series \sep radical series \sep nilpotency index
\end{keyword}

\end{frontmatter}

\section{Preliminaries}

If $G$ is a finite group with subgroup $H$ and $k$ a commutative ring with identity, then as in ~\cite{CurtisReiner}, the centralizer algebra $kG^H$ consists of all elements of $kG$ that are invariant under the conjugation action of $H$. There have been several recent investigations into the representation theory of $kG^H$ in the papers ~\cite{Ellers94}, ~\cite{Ellers95}, ~\cite{Ellers00}, ~\cite{Ellers04}, ~\cite{Murray02}, and ~\cite{Murray04}. In these papers, one of the motivating problems is the identification of the block idempotents of $kG^H$ for $G$ a $p$-solvable group and $H \unlhd G$, or $G = S_n$ and $H = S_m$. For $P$ a $p$-group with subgroup $Q$ and $k$ a field of characteristic $p$, $kP^Q$ has no nontrivial idempotents, and therefore the questions one might ask concerning the structure and representation theory of $kP^Q$ have a somewhat different flavor than the study of the more general $kG^H$. In particular, this paper explores the Loewy structure of $kP^Q$ and its representation type.\\

Jennings proved in ~\cite{Jennings} a theorem that now bears his name and which allows us to the compute the radical layers of the group algebra $kP$ for $P$ a $p$-group using certain characteristic subgroups $\{\kappa_i\}$ of $P$. More precisely, we let $\kappa_1 = P$ and inductively define $\kappa_n$ as the subgroup of $P$ generated by $[\kappa_s,\kappa_t]$ whenever $s,t < n$ and $s+t \ge n$, along with all $p$th powers of elements from $\kappa_r$ whenever $r < n$ and $pr \ge n$. So $\kappa_2 = \Phi(P)$ and each $\kappa_i/\kappa_{i+1}$ is an elementary abelian $p$-group. Let $\{x_{ij}\}_{j=1}^{s_i}$ be a collection of elements of $\kappa_i$ whose images in $\kappa_i/\kappa_{i+1}$ form a basis. Define $X_{ij} = x_{ij}-1 \in kP$ and consider products of the form $\prod X_{ij}^{a_{ij}}$ where $0 \leq a_{ij} < p$, and where the identity occurs when all $a_{ij} = 0$. We define the weight $w$ of $\prod X_{ij}^{a_{ij}}$ as $\sum ia_{ij}$. Jennings' Theorem states that the products of weight $w$ lie in $J^w(kP)$ and form a basis modulo $J^{w+1}(kP)$.\\

Alperin generalized in ~\cite{AlperinLoewy} the preceding discussion to compute the radical layers of the $kP$-module $k[P/R]$ for $R \leq P$. In particular, for each $i$ the set $\{y_{ij}\}$ is chosen as a subset of $\{x_{ij}\}$ such that the image of $\{y_{ij}\}$ in $\kappa_i/(\kappa_i \cap R)\kappa_{i+1}$ forms a basis. Again we let $Y_{ij} = y_{ij}-1 \in k[P/R]$, consider products of the form $\prod Y_{ij}^{a_{ij}}$ for $0 \leq a_{ij} < p$, and assign this product the weight $w = \sum ia_{ij}$. Then the products of weight $w$ lie in $J^w(k[P/R])$ and form a basis modulo $J^{w+1}(k[P/R])$.\\

More recently, in ~\cite{Towers} Towers obtained a decomposition of the Hecke algebra $\mbox{End}_P(k_Q\uparrow^P)$ for $P$ a $p$-group with subgroup $Q$, and used this to compute the radical series of $\mbox{End}_P(k_Q\uparrow^P)$ when $P$ has nilpotency class 2 and $[P,Q]$ is cyclic. Taking $Q = 1$ yields $\mbox{End}_P(k_Q\uparrow^P) \simeq kP$, and hence one can view these results as a generalization of the work from \cite{AlperinLoewy}. This is similar to what occurs for $kP^Q$ when $Q \leq ZP$, in which case $kP^Q = kP$. Since $kP^Q$ is isomorphic with the Hecke algebra $\mbox{End}_{Q \times P}(k_{\Delta Q}\uparrow^{Q \times P})$, several of the results from ~\cite{Towers} are relevant in the study of $kP^Q$. Indeed, the decomposition obtained in ~\cite{Towers} for Hecke algebras of $p$-groups specializes to the decomposition $kP^Q = kC_P(Q) \ltimes I$, and Theorem 6.2 from ~\cite{Towers} will serve as the starting point for our computations of the radical series of $kP^Q$.\\

The paper proceeds as follows. In section 2 we briefly derive the decomposition $kP^Q = kC_P(Q) \ltimes I$ using a counting principle that will foreshadow later arguments. We apply this decomposition to obtain several general results regarding the structure of $kP^Q$. In section 3 we establish several technical results that are of interest in their own right and that will also be useful later. In section 4 we compute the dimensions of the radical layers for centralizer algebras arising from extra special $p$-groups, noting that there is a surprising symmetry present here. The extra special $p$-groups also arise in sections 8 and 9. This symmetry is further explored in section 5, under somewhat restrictive conditions on $kP^Q$. In section 6 we give some general results concerning the Loewy length $\ell\ell(kP^Q)$ of $kP^Q$. For $P$ a $p$-group with a cyclic subgroup of index $p$ and $Q \leq P$ arbitrary, $\ell\ell(kP^Q)$ is computed explicitly in section 7. In sections 8 and 9 we derive lower and upper bounds for $\ell\ell(kP^Q)$. As an application of the decomposition $kP^Q = kC_P(Q) \ltimes I$, we determine the representation type of $kP^Q$ in section 10. Lastly, section 11 poses open questions and possible avenues for further research.\\

\textit{Notation} Throughout this paper $P$ will denote a $p$-group with subgroup $Q$, and $k$ will be an arbitrary field of characteristic $p$. Only in section 10 will it be necessary to assume that $k$ is algebraically closed. The augmentation map is denoted by $\varepsilon : kP \rightarrow k$. For $x,y \in P$ we write ${^xy} = xyx^{-1}$, $[x,y] = xyx^{-1}y^{-1}$, and if $y = {^qx}$ for some $q \in Q$ then we denote this by $x \sim_Q y$. In particular, the orbit ${^Qx}$ of $x$ under the conjugation action of $Q$ consists of all $y \in P$ such that $y \sim_Q x$. For $\xi \in kP$ if we write $\xi = \sum_{x \in P} \xi_x x$, then we define the support of $\xi$ as $\mbox{Supp}(\xi) = \{ x \in P : \xi_x \not= 0 \}$. For $x \in P$ we let $\kappa_x$ be the element of $kP^Q$ given by $\kappa_x = \sum_{q \in Q/C_Q(x)} {^qx}$. Lastly, $\ell\ell(kP^Q)$ denotes the Loewy (= radical) length of $kP^Q$. That is, $\ell\ell(kP^Q)$ is the smallest integer $d$ for which $J^d(kP^Q) = 0$.

\section{Structure of $kP^Q$}

With the notation from section 1, notice that $J(kP^Q) = \mbox{Ker}(\varepsilon) \cap kP^Q$ since $\mbox{Ker}(\varepsilon) \cap kP^Q$ is a nilpotent ideal in $kP^Q$ with codimension 1. In particular, $kP^Q$ is a basic and connected algebra with the unique simple module $k$, on which it acts via $\varepsilon$. The decomposition $kP^Q = kC_P(Q) \ltimes I$ can be obtained from \cite{Towers} by appropriate translation using the isomorphism $kP^Q \simeq \mbox{End}_{Q \times P}(k_{\Delta Q}\uparrow^{Q \times P})$. However, it is useful and instructive to derive this result directly. Begin by letting $\{x_i\}$ be representatives of the orbits of $Q$ acting on $P-C_P(Q)$ by conjugation, and define $\Omega = \{ \kappa_{x_i} \}$. So $kP^Q$ has the basis $C_P(Q) \cup \Omega$, and it is clear that $kC_P(Q)$ is a subalgebra of $kP^Q$. Let $I$ be the $k$-linear span of $\Omega$; we claim that $I$ is an ideal. Since $c\kappa_x = \kappa_{cx}$ and $\kappa_x c = \kappa_{xc}$ for $c \in C_P(Q)$ and $\kappa_x \in \Omega$, it is clear that $cI = Ic = I$. Also, if $\kappa_x,\kappa_y \in \Omega$ and $c \in C_P(Q)$, then the coefficient of $c$ in $\kappa_x\kappa_y$ equals $|S|$ where

$$S = \{(\bar{q_1},\bar{q_2}) \in Q/C_Q(x) \times Q/C_Q(y) : q_1xq_1^{-1}q_2yq_2^{-1} = c\}$$

The diagonal action of $Q$ on $Q/C_Q(x) \times Q/C_Q(y)$ induced by left multiplication leaves $S$ invariant since $qcq^{-1} = c$ for $q \in Q$. Also, if $(\bar{q_1},\bar{q_2})$ is invariant under $Q$ then $Q \leq q_1C_Q(x)q_1^{-1} \cap q_2C_Q(y)q_2^{-1}$, and so we obtain the contradiction that $x,y \in C_P(Q)$. Therefore, $Q$ acts semiregularly on $S$, and thus $|S| = 0$ in $k$. This implies that $I$ is an ideal in $kP^Q$. Moreover, since $p \mid |\mbox{Supp}(\kappa_x)|$ for $\kappa_x \in \Omega$, we see $I \subset \mbox{Ker}(\varepsilon)$. So $I$ is a nilpotent ideal, and hence $J(kP^Q) = J(kC_P(Q)) \oplus I$.\\

Moreover, $I$ is a $kC_P(Q) = kC$ permutation module since $C$ acts on the basis $\Omega$ by both left and right multiplication. When $C$ acts on $\Omega$ on the left and $\kappa_x \in \Omega$, we let $S_x$ denote the stabilizer in $C$ of $\kappa_x$. That is, $S_x$ consists of all $c \in C$ such that $cx \sim_Q x$. Writing $cx = qxq^{-1}$ yields $c = [q,x]$. So if write $S_x^{\dagger}$ for the set of all commutators $[q,x]$ with $q \in Q$, then $S_x = C \cap S_x^{\dagger}$. The following proposition summarizes these results.\\

\begin{prop}If $P$ is a $p$-group with subgroup $Q$, then as in ~\cite{Towers} there is a decomposition $kP^Q = kC_P(Q) \ltimes I$ with $I$ a nilpotent ideal that has a basis $\Omega$ that is permuted via left (or right) multiplication by $C_P(Q)$. Moreover, $J(kP^Q) = J(kC_P(Q)) \oplus I$ and the stabilizer $S_x$ in $C_P(Q)$ of $\kappa_x \in \Omega$ satisfies $S_x = C_P(Q) \cap S_x^{\dagger}$.\end{prop}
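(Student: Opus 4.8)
The plan is simply to assemble the pieces that have already been laid out in the discussion preceding the statement, since the proposition is essentially a summary. First I would fix the basis: let $\{x_i\}$ be a set of representatives for the orbits of $Q$ acting by conjugation on $P - C_P(Q)$, set $\Omega = \{\kappa_{x_i}\}$, and observe that $C_P(Q) \cup \Omega$ is a $k$-basis of $kP^Q$ — an element of $kP^Q$ is constant on $Q$-orbits, the orbits inside $C_P(Q)$ are singletons, and the orbits outside $C_P(Q)$ give precisely the $\kappa_{x_i}$. Then $kC_P(Q)$ is visibly a subalgebra, and $I := \mathrm{span}_k(\Omega)$ is a complement as a vector space.

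Next I would prove $I$ is a two-sided ideal. The relations $c\kappa_x = \kappa_{cx}$ and $\kappa_x c = \kappa_{xc}$ for $c \in C_P(Q)$ handle multiplication by $C_P(Q)$ on either side (and already show $I$ is a $kC_P(Q)$-permutation bimodule). For a product $\kappa_x \kappa_y$ of two elements of $\Omega$, the only thing to check is that no element $c \in C_P(Q)$ appears with nonzero coefficient; the coefficient of such a $c$ is $|S|$ with $S$ the set displayed in the excerpt, and the key point is that the diagonal left-multiplication action of $Q$ on $Q/C_Q(x) \times Q/C_Q(y)$ preserves $S$ (because $c$ is $Q$-fixed) and is free on $S$ — a fixed point $(\bar q_1, \bar q_2)$ would force $Q \le q_1 C_Q(x) q_1^{-1} \cap q_2 C_Q(y) q_2^{-1}$, i.e. $x, y \in C_P(Q)$, contrary to choice. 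Hence $p \mid |S|$, so $|S| = 0$ in $k$, and $\kappa_x\kappa_y \in I$. I would then note $I \subseteq \mathrm{Ker}(\varepsilon)$ since $|\mathrm{Supp}(\kappa_x)| = [Q : C_Q(x)]$ is divisible by $p$, so $I$ is a nilpotent ideal; combined with the already-noted fact $J(kP^Q) = \mathrm{Ker}(\varepsilon) \cap kP^Q$ and the vector-space decomposition $kP^Q = kC_P(Q) \oplus I$, this gives $J(kP^Q) = J(kC_P(Q)) \oplus I$, and the split extension $kP^Q = kC_P(Q) \ltimes I$ follows.

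Finally, for the stabilizer statement, I would compute directly: for $c \in C_P(Q)$, $c$ stabilizes $\kappa_x$ under left multiplication iff $cx \sim_Q x$, i.e. iff $cx = qxq^{-1}$ for some $q \in Q$, which rearranges to $c = qxq^{-1}x^{-1} = [q,x]$. Thus $S_x = C_P(Q) \cap \{[q,x] : q \in Q\} = C_P(Q) \cap S_x^{\dagger}$.

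The main obstacle is the ideal verification — specifically the semiregularity argument showing $p \mid |S|$ — but this is already spelled out in the paragraph before the statement, so in a write-up one would either repeat it briefly or simply cite that discussion. Everything else is bookkeeping with the permutation basis and the augmentation map.
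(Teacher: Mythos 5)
Your proposal reproduces the paper's own derivation essentially verbatim: the same permutation basis $C_P(Q)\cup\Omega$, the same counting argument showing the $Q$-action on $S$ has no fixed point (hence $p\mid|S|$, so $I$ is closed under multiplication), the same use of $I\subseteq\mathrm{Ker}(\varepsilon)$ to get nilpotence and the radical decomposition, and the same one-line computation identifying $S_x$ with $C_P(Q)\cap S_x^{\dagger}$. It is correct and takes the same approach.
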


Recall that if $G$ is a finite group then $kG$ is a symmetric algebra. Using the decomposition from Proposition 2.1, we show that if $Q$ is a non-central subgroup of $P$, then $kP^Q$ is never a symmetric algebra.\\

\begin{prop}If $P$ is a $p$-group with a non-central subgroup $Q$, then $kP^Q$ is not self-injective.\end{prop}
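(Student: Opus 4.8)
The plan is to convert self-injectivity into a constraint on the socle and then violate it. As recorded at the start of Section~2, $A:=kP^Q$ is basic and connected with unique simple module $k$, hence local; consequently the regular module $A_A$ is projective and indecomposable, since $\operatorname{End}_A(A_A)\cong A^{\mathrm{op}}$ is again local. If $A$ were self-injective then $A_A$ would in addition be injective, hence the injective envelope of a simple module, which forces $\operatorname{soc}(A_A)$ to be simple and therefore one-dimensional over $k$. So the goal becomes: when $Q$ is non-central, to produce two linearly independent elements of $\operatorname{soc}(A_A)=\{a\in A: aJ(A)=0\}$.

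For the first element I would take $\widehat{P}:=\sum_{x\in P}x$, which lies in $A$ because it is conjugation-invariant. Since $\widehat{P}\,g=\widehat{P}$ for every $g\in P$, it kills the augmentation ideal of $kP$; and since $J(A)=\operatorname{Ker}(\varepsilon)\cap A$ is contained in that augmentation ideal, $\widehat{P}\in\operatorname{soc}(A_A)$. For the second element I would exploit that $Q\not\le Z(P)$ makes $C_P(Q)$ a proper subgroup, so that $P\setminus C_P(Q)\neq\emptyset$ and hence the ideal $I$ of Proposition~2.1 is nonzero. As a nonzero finite-dimensional module, $I$ has nonzero socle, and $\operatorname{soc}(I)\subseteq\operatorname{soc}(A_A)$ because $I$ is a submodule of the regular module; choose $0\neq\zeta\in\operatorname{soc}(I)$. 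In the basis $C_P(Q)\cup\Omega$ of $A$, the element $\widehat{P}$ has coefficient $1$ at $1\in C_P(Q)$, whereas every element of $I$ has all of its $C_P(Q)$-coefficients equal to zero; hence $\widehat{P}\notin I$, so $\widehat{P}$ and $\zeta$ are linearly independent and $\dim_k\operatorname{soc}(A_A)\ge 2$, contradicting the previous paragraph.

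I expect the only real friction to be the structural reduction in the first paragraph — that a local, self-injective, finite-dimensional algebra has simple (hence one-dimensional) socle — together with being careful that the two displayed elements are genuinely independent, which is immediate once one observes that one of them lies in $I$ and the other does not. Everything else is a short verification. One could instead run the argument through the Frobenius double-annihilator criterion applied to the left ideal $I$ (asking whether $\dim_k\mathrm{r.ann}(I)=\dim_k A-\dim_k I$), but the socle route is shorter and, conveniently, requires no case analysis on the stabilizers $S_x$ of Proposition~2.1.
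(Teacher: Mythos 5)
Your proof is correct and takes essentially the same approach as the paper: both reduce self-injectivity to the socle of the regular module being one-dimensional, then exhibit $\sum_{x\in P} x$ and a nonzero socle element of $I$ as linearly independent socle elements (the independence coming from the support/coefficient disjointness between $C_P(Q)$ and $P\setminus C_P(Q)$). The only cosmetic differences are that you work with the right regular module and argue via the indecomposable-injective-is-an-injective-envelope fact, while the paper works with the left regular module and dualizes to a projective cover; both reductions are standard and equivalent.
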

\begin{proof}For convenience write $\Lambda = kP^Q$. If $_{\Lambda}\Lambda$ were injective, then $(_{\Lambda}\Lambda)^*$ would be projective and hence isomorphic to $\Lambda_{\Lambda}$ since $\Lambda$ is local, so that $\mbox{Top}((_{\Lambda}\Lambda)^*) \simeq k$ and hence $\mbox{Soc}(_{\Lambda}\Lambda) \simeq k$. Therefore, it suffices to show that $\mbox{Soc}(_{\Lambda}\Lambda)$ is at least two-dimensional. Write $\sigma = \sum_{p \in P} p$ and notice that $\sigma \in kP^Q$ and $\xi\sigma = \varepsilon(\xi)\sigma$ for $\xi \in kP^Q$. Thus $J(kP^Q)\sigma = 0$ and hence $k\sigma \subseteq \mbox{Soc}(_{\Lambda}\Lambda)$. On the other hand, $I$ is a nonzero submodule of $_{\Lambda}\Lambda$, and so $0 \not= \mbox{Soc}(I) \subseteq \mbox{Soc}(_{\Lambda}\Lambda)$. Since $\mbox{Supp}(\sigma) = P$ and $\mbox{Supp}(\xi) \subseteq P \setminus C_P(Q)$ for $\xi \in I$ we get $\mbox{Soc}(I) \cap k\sigma = 0$, and hence $\mbox{Soc}(_{\Lambda}\Lambda)$ is at least two-dimensional.
\end{proof}

Recall also that if $H$ is a subgroup of $G$ then $kG$ is projective as a $kH$-module. Again using proposition 2.1 we can show that the analogous statement for centralizer algebras is false.

\begin{prop}If $P$ is a $p$-group with a non-central $p$-element $x$, then $kP$ is not projective as a $kP^{\lr{x}}$-module.\end{prop}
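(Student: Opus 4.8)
Write $Q=\langle x\rangle$, $C=C_P(Q)=C_P(x)$ and $\Lambda=kP^Q$, and view $kP$ as a left $\Lambda$-module (the right-module case is symmetric). The plan is to reduce the question to the subalgebra $kC\subseteq\Lambda$. Recall from Section~2 that $\Lambda$ is local, so a projective $\Lambda$-module is free; hence it suffices to show $kP$ is not free over $\Lambda$. If it were, say $kP\cong\Lambda^{n}$ (necessarily $n\ge 1$), then $\Lambda$ is a direct summand of $kP$ as a $\Lambda$-module, and so also as a $kC$-module. But $C\le P$, so $kP$ is free as a $kC$-module, and therefore its summand $\Lambda\downarrow_{kC}$ is projective, hence --- $kC$ being local as well --- free over $kC$. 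I would then contradict this by showing $\Lambda\downarrow_{kC}$ is not $kC$-free.

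To see this, invoke the decomposition $\Lambda=kC\oplus I$ of Proposition~2.1, a decomposition of left $kC$-modules in which $I$ is the $kC$-permutation module with basis $\Omega$ (with $C$ acting by left multiplication). Splitting $I$ into its transitive constituents, $I\cong\bigoplus_{O}k[C/S_{O}]$, the sum over the $C$-orbits $O$ on $\Omega$ and $S_{O}$ the stabilizer. Each $k[C/S_{O}]$ is a direct summand of $I$, hence would be free over the local ring $kC$ if $I$ were; but a free cyclic $kC$-module is $\cong kC$, forcing $S_{O}=1$. Thus $\Lambda\downarrow_{kC}$ is $kC$-free only if $C$ acts freely on $\Omega$, and it is enough to exhibit a single $\kappa_{y}\in\Omega$ with $S_{y}\ne 1$.

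This is where non-centrality of $x$ enters. Since $x\notin Z(P)$ we have $C=C_P(x)<P$, so, $P$ being a $p$-group, $C$ is strictly smaller than its normalizer; pick $g\in N_P(C)\setminus C$. Then $g\notin C_P(x)$, so ${}^{Q}g$ is a non-trivial $Q$-orbit and we may take $g$ as its chosen representative, giving $\kappa_{g}\in\Omega$. Put $c=[x,g]$. Since $g$ normalizes $C$ and $x\in C$, both $x$ and $gx^{-1}g^{-1}=(gxg^{-1})^{-1}$ lie in $C$, so $c=x\cdot gx^{-1}g^{-1}\in C$; and $c\ne 1$ because $g\notin C_P(x)$. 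Finally $cg=xgx^{-1}={}^{x}g$, so $cg\sim_Q g$ and hence $c\in S_{g}$. Therefore $S_{g}\ne 1$, $C$ does not act freely on $\Omega$, and the proof is complete.

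The step I expect to be the real obstacle is recognizing that a crude numerical argument will not do: one might hope that $\dim_k\Lambda$ fails to divide $|P|$ whenever $Q$ is non-central, but this can fail (for instance, taking $P=C_4\ltimes C_2^{3}$ with $x$ a generator of the $C_4$ acting by a regular unipotent automorphism of $C_2^{3}$, one gets $\dim_k\Lambda=16$ and $|P|=32$). So freeness of $kP$ over $\Lambda$ must be refuted structurally, via the finer fact that the $C$-set $\Omega$ carries a non-free orbit; locating such an orbit --- equivalently, finding a commutator $[x,g]$ that is simultaneously nontrivial and centralizes $x$ --- is the crux, and it is precisely the fact that proper subgroups of $p$-groups lie strictly inside their normalizers that delivers it.
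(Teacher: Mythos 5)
Your proof is correct, and it takes a genuinely different route from the paper. The paper's own argument is exactly the ``crude numerical argument'' you rule out: it observes that since $|x|=p$, every $\lr{x}$-orbit on $P-C_P(x)$ has size $p$, so $\dim kP^{\lr{x}} = |C_P(x)| + (|P|-|C_P(x)|)/p$, which (being $>|P|/p$ and congruent to $(p-1)|C_P(x)|/p \not\equiv 0 \bmod p$ after clearing the denominator) cannot divide $|P|=p^n$ unless $C_P(x)=P$; freeness over the local ring $\Lambda$ then forces the contradiction. You seem to have read ``$p$-element'' as ``element of a $p$-group'' (hence of arbitrary $p$-power order), whereas the paper's proof makes clear it means $|x|=p$, and under that reading the counting argument is airtight and shorter. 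Your $C_4\ltimes C_2^3$ example with $|x|=4$ does not bear on the statement as the author intends it.

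That said, what you wrote is not wasted: your structural argument --- restrict the putative free $\Lambda$-module $kP$ to $kC$, use $\Lambda = kC \oplus I$ with $I$ a $kC$-permutation module, observe that $kC$-freeness of a cyclic orbit summand $k[C/S_O]$ forces $S_O=1$, and then exhibit a non-free orbit by picking $g \in N_P(C)\setminus C$ so that $[x,g]$ is a nontrivial element of $S_g$ --- actually proves the stronger statement that $kP$ is not $kP^{\lr{x}}$-projective for \emph{any} non-central $x$, regardless of its order, which is precisely the regime where the counting argument breaks down. The step where you check $[x,g]\in C$ (using $x\in C$ and $g\in N_P(C)$) and $[x,g]\neq 1$ (using $g\notin C_P(x)$) is correct and is the crux. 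So: different method, a longer proof of the stated proposition, but a shorter path to a more general one.
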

\begin{proof}
Write $kP^{\lr{x}} = kC_P(x) \ltimes I$ where $I$ has basis $\Omega$. Since $|x| = p$,  we get $|\mbox{Supp}(\kappa_y)| = p$ for $\kappa_y \in \Omega$. So we compute

$$\dim kP^{\lr{x}} = |C_P(x)| + |\Omega| = |C_P(x)| + \frac{|P|-|C_P(x)|}{p} > \frac{|P|}{p}$$

If $kP$ were projective as a $kP^{\lr{x}}$-module, then $kP$ would be a free $kP^{\lr{x}}$-module since $kP^{\lr{x}}$ is a local algebra. This would imply that $\dim kP^{\lr{x}}$ divides $|P|$. Since $\dim kP^{\lr{x}} > \frac{|P|}{p}$, the only is possibility is $\dim kP^{\lr{x}} = |P|$, and hence $x \in ZP$, contrary to our assumption on $x$.
\end{proof}

\section{Radical Structure of $kP^Q$}

With the notation from section 2, we know that $J(kP^Q) = J' \oplus I$ where we write $C = C_P(Q)$ and $J' = J(kC)$ for brevity. In computing $J^d$ for $d > 1$ it is useful to consider two separate questions: when is $J'I = IJ'$? and when is $I^2 \subseteq J'I$?

\begin{prop}Let $P$ be a $p$-group with subgroup $Q$ and write $kP^Q = kC \ltimes I$. Then $J'I = IJ'$ if and only if $Q$ satisfies the following condition: for all $x \in P$ and all $c \in C$, there exists $q \in Q$ such that $[x,qc] \in C$.
\end{prop}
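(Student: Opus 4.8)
The plan is to unwind both sides of the equation $J'I = IJ'$ into explicit linear combinations of basis elements of $I$ and compare coefficients. Recall $I$ has $k$-basis $\Omega = \{\kappa_x\}$, while $J' = J(kC)$ is spanned by elements $c - 1$ with $c \in C$ (equivalently, by $c - c'$ for $c,c' \in C$). Since $(c-1)\kappa_x = \kappa_{cx} - \kappa_x$ and $\kappa_x(c-1) = \kappa_{xc} - \kappa_x$, the subspace $J'I$ is spanned by the elements $\kappa_{cx} - \kappa_x$, and $IJ'$ is spanned by the elements $\kappa_{xc} - \kappa_x$, as $c$ ranges over $C$ and $\kappa_x$ over $\Omega$. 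Thus $J'I = IJ'$ is equivalent to: the left-translation action and the right-translation action of $C$ on the set $\Omega$ generate the same subspace of the permutation module $I$.

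First I would make this ``same subspace'' statement combinatorial. For a permutation action of $C$ on a set $\Omega$, the span of all $\{c\omega - \omega\}$ is the ``augmentation-type'' submodule whose codimension equals the number of $C$-orbits, and more precisely the quotient $I / (J'I)$ has as a basis the indicator functions of the left $C$-orbits on $\Omega$. So $J'I = IJ'$ holds if and only if the partition of $\Omega$ into left $C$-orbits coincides with the partition into right $C$-orbits. Two elements $\kappa_x, \kappa_y \in \Omega$ lie in the same left $C$-orbit precisely when $\kappa_y = \kappa_{cx}$, i.e. $cx \sim_Q y$, for some $c \in C$; they lie in the same right $C$-orbit precisely when $y \sim_Q xc'$ for some $c' \in C$. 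So the condition becomes: for all $x, y \in P \setminus C_P(Q)$, there exists $c \in C$ with $cx \sim_Q y$ if and only if there exists $c' \in C$ with $xc' \sim_Q y$.

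Next I would reduce this two-sided ``iff'' over all pairs to the single closed-form condition in the statement. The natural move is: the left $C$-orbit of $\kappa_x$ is contained in the right $C$-orbit of $\kappa_x$ for every $x$ if and only if for every $x$ and every $c \in C$ we have $cx \sim_Q xc'$ for some $c' \in C$, i.e. $cx = q x c' q^{-1} = ({}^q x)({}^q c')$ for some $q \in Q$; since ${}^q c'$ ranges over all of $C$ as $c'$ does (as $Q$ normalizes... actually $C = C_P(Q)$ is $Q$-invariant), this says $({}^q x)^{-1} c x \in C$, i.e. $x^{-1}({}^{q^{-1}}c) x \cdot (\text{stuff}) \in C$; rearranging, $[x, qc] \in C$ for a suitable reindexing $q \mapsto q^{-1}$ and using that $C$ is a subgroup. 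One then checks the reverse containment (right orbit inside left orbit) is symmetric and follows from the same condition by taking inverses, so containment in one direction already forces equality of the orbits. I would organize this as two short lemmas: (i) $J'I = IJ'$ $\iff$ left and right $C$-orbits on $\Omega$ agree; (ii) the orbit-equality is equivalent to the commutator condition.

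The main obstacle I anticipate is step (ii): carefully translating ``$cx \sim_Q y$ for some $c$ iff $xc'\sim_Q y$ for some $c'$, for all $y$'' into the clean statement ``$[x,qc]\in C$ for all $x,c$ and some $q$,'' keeping track of which side conjugation acts on and verifying that one inclusion of orbits is genuinely enough. A subtle point to get right is that $C_P(Q)$ is normalized by $Q$, so conjugating a candidate $c' \in C$ by $q \in Q$ stays inside $C$; this is what lets the quantifier over $c'$ be absorbed. Once the bookkeeping of conjugation sides is pinned down, the equivalence should drop out by elementary group manipulation.
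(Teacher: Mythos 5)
Your proposal is correct and takes essentially the same route as the paper: both reduce $J'I = IJ'$ to the coincidence of the left and right $C$-orbit structures on $\Omega$ via the description of $J'k\Omega_j$ as the zero-sum elements on each orbit, and then translate the orbit condition into the commutator criterion using that $Q$ centralizes $C$. The paper's only stylistic shortcut is the identity $\kappa_x(c^{-1}-1)=(c^{-1}-1)\kappa_{cxc^{-1}}+(\kappa_{cxc^{-1}}-\kappa_x)$, which immediately reduces the question to whether $\kappa_{cxc^{-1}}$ and $\kappa_x$ lie in the same left orbit and makes the symmetry between the two containments transparent.
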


\begin{proof}
To establish $IJ' \subseteq J'I$ it is enough to check that $\kappa_x(c^{-1}-1) \in J'I$ for $\kappa_x \in \Omega$ and $c \in C$. We compute

$$\kappa_x(c^{-1}-1) = (c^{-1}-1)\kappa_{cxc^{-1}} + (\kappa_{cxc^{-1}}-\kappa_x)$$

Since $(c^{-1}-1)\kappa_{cxc^{-1}} \in J'I$ we need $\kappa_{cxc^{-1}}-\kappa_x \in J'I$. So let $\Omega = \coprod_{i=1}^e \Omega_j$ be an orbit decomposition of $\Omega$ under the left action of $C$, so that $I = \bigoplus k\Omega_j$ as $kC$-modules. Then $J'I = \bigoplus J'k\Omega_j$ where $J'k\Omega_j$ is the Jacobson radical of the $kC$-module $k\Omega_j$. If $\kappa_{x_j}$ is a representative of the orbit $\Omega_j$, then $J'k\Omega_j$ consists of all elements of the form $\sum_{z \in C / S_{x_j}} \lambda_z z\kappa_{x_j}$ with $\sum \lambda_z = 0$. Assuming that $\kappa_{cxc^{-1}} \in \Omega_{j_1}$ and $\kappa_x \in \Omega_{j_2}$, we see that $\kappa_{cxc^{-1}}-\kappa_x \in J'I$ if and only if $j_1 = j_2$. In other words, there must exist $z \in C$ and $q \in Q$ satisfying $cxc^{-1} = q^{-1}zxq$. Since this is equivalent to $z = qcxc^{-1}q^{-1}x^{-1} = [qc,x]$, we see $IJ' \subseteq J'I$ if and only if for all $x \in P-C$ and $c \in C$ there is $q \in Q$ such that $[x,qc] \in C$. This condition can be taken for all $x \in P$ since it is trivially satisfied for $x \in C$. This is the same condition that is necessary and sufficient for $J'I \subseteq IJ'$. So the proposition is established.
\end{proof}

\begin{Cor}If $Q \leq P$ then $J'I = IJ'$ whenever $P$ has nilpotency class 2, $C_P(Q) \unlhd P$, or $Q$ contains its centralizer.\end{Cor}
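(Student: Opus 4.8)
The plan is to check, in each of the three listed situations, the combinatorial criterion supplied by Proposition 3.1: that for every $x \in P$ and every $c \in C = C_P(Q)$ there is some $q \in Q$ with $[x,qc] \in C$. In each case I expect to do this by an explicit choice of $q$, so the argument will be short; the content lies only in picking the right $q$ and checking that the resulting commutator is in $C$.

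Suppose first that $P$ has nilpotency class $2$. Then I would take $q = 1$, so that $[x,qc] = [x,c] \in [P,P] \leq ZP$; since central elements commute with $Q$ we have $ZP \leq C_P(Q) = C$, and hence the criterion holds. Next suppose $C_P(Q) \unlhd P$. Again take $q = 1$, and write $[x,c] = (xcx^{-1})c^{-1}$; normality of $C$ gives $xcx^{-1} \in C$, while $c^{-1} \in C$, so $[x,c] \in C$, as required.

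Finally suppose $Q$ contains its centralizer, which means precisely $C = C_P(Q) \leq Q$. Then $c \in C \leq Q$, so $c^{-1} \in Q$, and I would take $q = c^{-1}$; this gives $qc = 1$ and hence $[x,qc] = [x,1] = 1 \in C$. (Equivalently, since $q \mapsto qc$ is then a bijection of $Q$, it suffices to find $q' \in Q$ with $[x,q'] \in C$, and $q' = 1$ works.) This settles all three cases, so $J'I = IJ'$ by Proposition 3.1.

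I do not foresee any genuine obstacle: the corollary is a direct unwinding of Proposition 3.1. The only two points that call for a moment's attention are recalling, in the first case, that $ZP \leq C_P(Q)$ for every subgroup $Q$ (so the commutator $[x,c]$, being central, automatically lies in $C$), and observing, in the third case, that the hypothesis "$Q$ contains its centralizer" is exactly $C_P(Q) \leq Q$, which is what makes $c^{-1}$ available as a legitimate choice of $q$.
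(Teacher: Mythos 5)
Your proof is correct and follows exactly the same route as the paper: in all three cases you verify the criterion of Proposition 3.1 by the same explicit choices of $q$ (namely $q=1$ in the first two cases and $q=c^{-1}$ in the third), with the observations that $[P,P]\leq ZP\leq C$ for class $2$ and that $xcx^{-1}\in C$ when $C\unlhd P$.
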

\begin{proof}If $P$ has class 2 then $[x,qc] \in ZP \leq C$; if $C \unlhd P$ we can take $q = 1$; and if $C \leq Q$ we can take $q = c^{-1}$.
\end{proof}

For convenience, we refer to condition (*) as the requirement that $J'I = IJ'$, and we refer to condition (**) as the requirement that $I^2 \subseteq J'I$. Condition (*) appears to be mild as it is almost always satisfied. For example, of the 28,075 pairs $(P,Q)$ with $|P| = 2^6$, MAGMA computed that there are only 68 for which $J'I \not= IJ'$. Condition (**) on the other hand seems less natural; of the 28,075 pairs $(P,Q)$ with $|P| = 2^6$, there are 7,347 for which $I^2 \not\subseteq J'I$. Condition (**) is also more obscure in how it is reflected in the group-theoretic structure of $P$ and $Q$, and we are only able to offer a partial result for when (**) holds. For this, we need to generalize a result from ~\cite{Towers} concerning the structure constants of $I$ in terms of the basis $\Omega$.\\

\begin{lemma}Suppose $Q \leq P$ and $\kappa_x,\kappa_y,\kappa_z \in \Omega$. Then $\kappa_z$ appears in $\kappa_x\kappa_y$ with nonzero coefficient $\mu_{xyz}$ only when $z \sim_Q q^{-1}xqy$ for some $q \in Q$, in which case $\mu_{xyz} = |q^{-1}S_{x^{-1}}^{\dagger}q \cap S_y^{\dagger}|$. Moreover, if $[P,Q,Q] = 1$ then $\mu_{xyz} = |[Q,x^{-1}] \cap [Q,y]|$, $S_w = [Q,w]$ for $\kappa_w \in \Omega$, and $[Q,P]$ is abelian.\end{lemma}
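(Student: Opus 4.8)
The plan is to compute the coefficient $\mu_{xyz}$ directly from the multiplication rule for the basis elements $\kappa_x$, paralleling the counting argument that established that $I$ is an ideal in Section 2, and then to specialize under the hypothesis $[P,Q,Q] = 1$. First I would write $\kappa_x = \sum_{\bar q_1 \in Q/C_Q(x)} {}^{q_1}x$ and $\kappa_y = \sum_{\bar q_2 \in Q/C_Q(y)} {}^{q_2}y$, so that the coefficient of a fixed group element $w \in P$ in $\kappa_x\kappa_y$ is the cardinality of $\{(\bar q_1,\bar q_2) : {}^{q_1}x \cdot {}^{q_2}y = w\}$. Since $Q$ acts on such pairs diagonally by left multiplication, sending this set of pairs for $w$ to the corresponding set for ${}^q w$, the coefficient of $\kappa_z$ in $\kappa_x\kappa_y$ is well-defined and equals the number of pairs producing any one fixed representative $z$ of the orbit ${}^Q z$. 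Writing ${}^{q_1}x\cdot{}^{q_2}y = z$, set $q = q_2$ and $q' = q_1 q_2^{-1}$; then ${}^{q_1}x = {}^{q}({}^{q'}x)$, so the equation becomes ${}^{q'}x \cdot y = {}^{q^{-1}}z$, forcing $z \sim_Q ({}^{q'}x) y$, i.e. $z \sim_Q q'^{-1}\cdot({}^{q'}x)y\cdot$... — more cleanly, reparametrizing shows $z \sim_Q q^{-1}xq\,y$ for some $q \in Q$, which is the stated support condition. Fixing such a $q$, the number of pairs mapping to that representative is counted by elements $q'$ with ${}^{q'}x\cdot y = q^{-1}xq\,y$, equivalently $({}^{q'}x)^{-1}\cdot q^{-1}xq \in$ a coset; rearranging, one is counting how $q'$ ranges so that $[q', x^{-1}]$ (an element of $S_{x^{-1}}^\dagger$) matches, up to the $q$-conjugation bookkeeping, an element of $S_y^\dagger$. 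Carrying the indices carefully gives $\mu_{xyz} = |q^{-1}S_{x^{-1}}^\dagger q \cap S_y^\dagger|$.

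For the second assertion, assume $[P,Q,Q] = 1$. The key structural consequence is that for $w \in P$ the map $q \mapsto [q,w]$ is a homomorphism $Q \to P$: indeed $[q_1q_2,w] = [q_1,w]\,{}^{q_1}[q_2,w]$ and ${}^{q_1}[q_2,w] = [q_2,w]$ because $[[q_2,w],q_1] \in [P,Q,Q] = 1$. Hence $S_w^\dagger = [Q,w]$ is a subgroup, and it is normalized by $Q$ (again since $[[Q,w],Q]=1$, it is even centralized by $Q$), so $q^{-1}S_{x^{-1}}^\dagger q = S_{x^{-1}}^\dagger = [Q,x^{-1}]$ for every $q$; this both removes the dependence of $\mu_{xyz}$ on $q$, giving $\mu_{xyz} = |[Q,x^{-1}]\cap[Q,y]|$, and shows $S_x = C \cap S_x^\dagger = C\cap[Q,x] = [Q,x]$, where the last equality holds because $[Q,x] \subseteq [P,Q] \subseteq C_P(Q) \subseteq C_P(Q)$ — more precisely, $[Q,x] \subseteq [P,Q]$ and every element of $[P,Q]$ commutes with $Q$ (as $[[P,Q],Q]=1$), so $[P,Q] \subseteq C_P(Q) = C$, whence $[Q,x] \subseteq C$ and $S_x = [Q,x]$. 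Finally, $[P,Q]$ is generated by the $[q,w]$, and since $q\mapsto[q,w]$ is a homomorphism with image in the $Q$-fixed points $C_P(Q)$, and commutators $[q,w][q',w'] = [q',w'][q,w]$ will follow from expanding and using $[P,Q,Q]=1$ together with $[P,Q,P]$... here I would instead argue that $[P,Q] \subseteq Z(C_P(Q))$ is not quite enough, so the cleanest route is: $[P,Q]$ normalized by $P$, and modulo the fact that $[P,Q,Q]=1$ makes each $[Q,w]$ central in $[P,Q]$... I would verify abelianness by checking $[[q,w],[q',w']] = 1$ via the Hall–Witt identity or by direct commutator expansion, using $[P,Q,Q]=1$ repeatedly.

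The main obstacle I expect is the careful index bookkeeping in the general formula $\mu_{xyz} = |q^{-1}S_{x^{-1}}^\dagger q \cap S_y^\dagger|$: one must track exactly which double cosets the parametrizing pairs $(\bar q_1,\bar q_2)$ fall into, confirm that the count is genuinely independent of the chosen orbit representative $z$ (using the semiregularity-type argument from Section 2), and match the raw count of pairs to an intersection of sets of commutators after the substitution $q_1 = q'q_2$. The specialization to $[P,Q,Q]=1$ is then essentially formal once one knows $q\mapsto[q,w]$ is a homomorphism with central image in $[P,Q]$; the only delicate point there is deducing that $[P,Q]$ is abelian, for which I would fall back on a direct commutator computation rather than quoting a black box, since $[P,Q,Q]=1$ alone must do all the work.
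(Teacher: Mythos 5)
Your plan is a genuinely different and more elementary route than the paper's. The paper obtains the structure-constant formula by passing through the anti-isomorphism $kP^Q \simeq \operatorname{End}_{Q\times P}(k_{\Delta Q}\uparrow^{Q\times P})^{\mathrm{op}}$, matching each $\kappa_{q^{-1}p}$ with a standard double-coset basis element $A_{(q,p)}$, and then invoking and simplifying a formula of Towers expressing $\mu_{xyz}$ as $|Q|^{-1}$ times a cardinality of an intersection of double cosets in $Q\times P$. You instead count directly the pairs $(\bar q_1,\bar q_2)\in Q/C_Q(x)\times Q/C_Q(y)$ with ${}^{q_1}x\cdot{}^{q_2}y = z$, exactly as in the Section~2 argument; this avoids the Hecke-algebra machinery altogether, and your reduction to the support condition $z \sim_Q q^{-1}xqy$ is correct (modulo the harmless slip that $q'$ should be $q_2^{-1}q_1$, not $q_1q_2^{-1}$).

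However, the heart of the lemma is the exact count, and your sketch of it does not work as written. You say the pairs are "counted by elements $q'$ with ${}^{q'}x\cdot y = q^{-1}xq\,y$" — but that equation forces ${}^{q'}x = q^{-1}xq$, so $q'$ is confined to the single coset $q^{-1}C_Q(x)$: you have frozen $q_2$ and would get a count of $1$, not $|q^{-1}S_{x^{-1}}^{\dagger}q \cap S_y^{\dagger}|$. To make the direct count go through: normalize $z = q^{-1}xqy$ (legitimate since $\kappa_x\kappa_y$ is $Q$-invariant, so the coefficient is constant on $Q$-orbits), rewrite ${}^{q_1}x\cdot{}^{q_2}y = z$ as $[q_2,y] = q^{-1}[qq_1,x^{-1}]q$, and observe that $\bar q_2 \mapsto [q_2,y]$ is a bijection $Q/C_Q(y)\to S_y^{\dagger}$ while $\bar q_1 \mapsto q^{-1}[qq_1,x^{-1}]q$ is a bijection $Q/C_Q(x)\to q^{-1}S_{x^{-1}}^{\dagger}q$, so the solution pairs biject with the intersection. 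Your $[P,Q,Q]=1$ discussion is mostly sound — $q\mapsto[q,w]$ a homomorphism, $S_w^{\dagger}=[Q,w]$ a $Q$-centralized subgroup of $C_P(Q)$, hence $S_w=[Q,w]$ and $\mu_{xyz}=|[Q,x^{-1}]\cap[Q,y]|$ — but the abelianness of $[P,Q]$ is a real theorem (the paper cites Isaacs for it), and "verify via Hall--Witt or direct commutator expansion" is a promise, not a proof; either cite the result or supply an actual argument.
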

\begin{proof}
There is an anti-isomorphism $\psi : \mbox{End}_{Q \times P}(k_{\Delta Q}\uparrow^{Q \times P}) \rightarrow kP^Q$ of $k$-algebras given by sending the endomorphism $f$ to $\xi$ where $\xi \in kP^Q$ is the unique element satisfying $f(\Delta Q) = (1,\xi)\Delta Q$. Let $\{A_{(q,p)}\}$ denote the standard basis of $\mbox{End}_{Q \times P}(k_{\Delta Q}\uparrow^{Q \times P})$ indexed by the double cosets of $\Delta Q$ in $Q \times P$. It is easy to check that $\psi(A_{(q,p)}) = \kappa_{q^{-1}p}$. For $x,y \in P$, if the basis element $\kappa_z$ appears in $\kappa_x\kappa_y$ with a nonzero coefficient, then $z = q_1xq_1^{-1}q_2yq_2^{-1}$ for some $q_1,q_2 \in Q$, and hence $z \sim_Q q^{-1}xqy$ where $q = q_1^{-1}q_2$. Moreover, the coefficient of $\kappa_z$ in $\kappa_x\kappa_y$ equals the coefficient of $A_{(q,xqy)}$ in $A_{(1,y)}A_{(1,x)}$, which by ~\cite{Towers} satisfies

\begin{equation}\label{coefficients}
\mu_{xyz} = |Q|^{-1}|\Delta Q (1,x^{-1}) \Delta Q (q,xqy) \cap \Delta Q (1,y) \Delta Q|
\end{equation}

It is understood that the division in (\ref{coefficients}) takes place in $\mathbb{Z}$ and actually does yield a rational integer. We rewrite (\ref{coefficients}) to obtain

\begin{equation}\nonumber
\begin{split}
\mu_{xyz} &= |Q|^{-1}|\Delta Q (1,x^{-1}) \Delta Q (1,x) (q,q) \cap \Delta Q (1,y) \Delta Q (1,y^{-1})| \\
&= |Q|^{-1}|\Delta Q (1 \times S_{x^{-1}}^{\dagger})(q,q) \cap \Delta Q (1 \times S_y^{\dagger})| \\
&= |Q|^{-1}|\Delta Q (1 \times q^{-1}S_{x^{-1}}^{\dagger}q) \cap \Delta Q (1 \times S_y^{\dagger})| \\
&= |Q|^{-1}|\Delta Q \times (q^{-1}S_{x^{-1}}^{\dagger}q \cap S_y^{\dagger})| \\
&= |q^{-1}S_{x^{-1}}^{\dagger}q \cap S_y^{\dagger}| \\
\end{split}
\end{equation}\nonumber

where the fourth equality occurs because $(q_1,q_1)(1,p_1) = (q_2,q_2)(1,p_2)$ precisely when $q_1 = q_2$ and $p_1 = p_2$. This establishes the first assertion. When $[P,Q,Q] = 1$ we see as in ~\cite{IsaacsGroups} that $[P,Q] = [Q,P]$ is abelian. If $q_1,q_2 \in Q$ and $w \in P$ then the fact that $Q$ centralizes $[Q,P]$ and $[Q,P]$ is abelian yields

\begin{equation}\label{PQQ}
\begin{split}
[q_1q_2,w] &= [q_1,w][q_2,w] \\
[q_1,w]^{-1} &= [q_1^{-1},w] \\
[q_2q_1q_2^{-1},w] &= [q_1,q_2wq_2^{-1}] = [q_1,w] \\
[q_1,w^{-1}] &= w^{-1}[q_1^{-1},w]w \\
\end{split}
\end{equation}

The first of these relations implies that $[Q,w] = S_w^{\dagger}$ for $\kappa_w \in \Omega$. Since $q^{-1}[Q,x]q = [Q,x]$, we obtain $\mu_{xyz} = |[Q,x^{-1}] \cap [Q,y]|$ and the proof is complete.
\end{proof}

\begin{remark} Unfortunately, the condition $[P,Q,Q] = 1$ is rather restrictive since it implies that $[Q,Q,P] = 1$ and hence $[Q,Q] \leq ZP \cap Q \leq ZQ$, so that $Q$ has nilpotency class at most 2.\end{remark}

\begin{thm}Assume $P$ has subgroup $Q$ satisfying $[P,Q,Q] =1$, and write $kP^Q = kC \ltimes I$. If $I^2 \not\subseteq J'I$ then there are $x,y \in P-C_P(Q)$ satisfying $Q = C_Q(x)C_Q(y)$ and $C_Q(xy) = C_Q(x) \cap C_Q(y)$. Moreover, $|C_P(Q)| = |Q:C_Q(xy)|$.\end{thm}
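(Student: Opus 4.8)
The plan is to first translate the hypothesis $I^2\not\subseteq J'I$ into a statement about the structure constants supplied by Lemma 3.3. By Proposition 2.1, $I^2\not\subseteq J'I$ exactly means $\kappa_x\kappa_y\notin J'I$ for some $\kappa_x,\kappa_y\in\Omega$. Writing $\Omega=\coprod_j\Omega_j$ for the decomposition into orbits under left multiplication by $C=C_P(Q)$, we have $J'I=\bigoplus_j J'(k\Omega_j)$, and since $C$ is a $p$-group each $k\Omega_j$ is a transitive permutation $kC$-module whose radical $J'(k\Omega_j)$ is precisely its augmentation kernel. Hence, writing $\kappa_x\kappa_y=\sum_z\mu_{xyz}\kappa_z$, the condition $\kappa_x\kappa_y\notin J'I$ is equivalent to $\sum_{\kappa_z\in\Omega_j}\mu_{xyz}\ne 0$ in $k$ for some $j$.

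Next I would feed in Lemma 3.3. Since $[P,Q,Q]=1$, the coefficient $\mu_{xyz}=|[Q,x^{-1}]\cap[Q,y]|=:N$ is independent of $z$ (over the $z$ with $\mu_{xyz}\ne 0$) and is a power of $p$. Thus $\sum_{\kappa_z\in\Omega_j}\mu_{xyz}=N\cdot|Z\cap\Omega_j|$ where $Z=\{\kappa_z\in\Omega:\mu_{xyz}\ne 0\}$, and this is nonzero in $k$ only if $N=1$ \emph{and} $p\nmid|Z\cap\Omega_j|$. In particular $[Q,x^{-1}]\cap[Q,y]=1$, every $\mu_{xyz}$ is $0$ or $1$, $\kappa_x\kappa_y=\sum_{\kappa_z\in Z}\kappa_z$, and the product map ${}^Qx\times{}^Qy\to P$, $(x',y')\mapsto x'y'$, is injective.

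The heart of the argument is a normalization step. Because $\kappa_{cx}=\kappa_x$ whenever $c\in S_x=[Q,x]$ (Proposition 2.1, noting $[Q,x]\le[P,Q]\le C$ under $[P,Q,Q]=1$), the set $Z$, hence $Z\cap\Omega_j$, is stable under left multiplication by the $p$-group $[Q,x]$; since $p\nmid|Z\cap\Omega_j|$ there is a fixed point $\kappa_z\in Z$, i.e.\ $[Q,x]\subseteq[Q,z]$. Writing $z=x_0y_0$ with $x_0\in{}^Qx$, $y_0\in{}^Qy$ and replacing $(x,y)$ by $(x_0,y_0)$ — legitimate since $\kappa_{x_0}=\kappa_x$ and $\kappa_{y_0}=\kappa_y$, so the product and $Z$ are unchanged — we may assume $z=xy$, so $[Q,x]\subseteq[Q,xy]$. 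Now the commutator identity $[q,xy]=[q,x]\cdot{}^x[q,y]$ together with the fact that $[P,Q]$ is abelian and normal in $P$ and the relations in (\ref{PQQ}) yields $[Q,xy]=[Q,x]\cdot{}^x[Q,y]$; and conjugating $[Q,x^{-1}]\cap[Q,y]=1$ by $x$, using $[Q,x^{-1}]={}^{x^{-1}}[Q,x]$, gives $[Q,x]\cap{}^x[Q,y]=1$. Hence $|Q:C_Q(xy)|=|[Q,xy]|=|[Q,x]|\,|[Q,y]|=|Q:C_Q(x)|\,|Q:C_Q(y)|$. Since $C_Q(x)\cap C_Q(y)\le C_Q(xy)$ always and $|Q:C_Q(x)\cap C_Q(y)|\le|Q:C_Q(x)|\,|Q:C_Q(y)|$, this equality forces both $C_Q(xy)=C_Q(x)\cap C_Q(y)$ and $Q=C_Q(x)C_Q(y)$, giving the first two assertions; along the way $|{}^Q(xy)|=|{}^Qx\times{}^Qy|$ together with ${}^Q(xy)\subseteq\{x'y'\}$ shows $\{x'y'\}={}^Q(xy)$, so in fact $Z=\{\kappa_{xy}\}$ and $\kappa_x\kappa_y=\kappa_{xy}$.

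The step I expect to be the main obstacle is the final identity $|C_P(Q)|=|Q:C_Q(xy)|$. Since $|Q:C_Q(xy)|=|[Q,xy]|=|S_{xy}|$, this amounts to showing that the left $C$-orbit of $\kappa_{xy}$ is a single point, equivalently $C_P(Q)\subseteq S_{xy}=[Q,xy]$ (which, combined with $[Q,xy]\le[P,Q]\le C_P(Q)$, gives equality throughout). The delicate point is that once the normalization has been carried out $Z=\{\kappa_{xy}\}$ is a single orbit, so the inequality $p\nmid|Z\cap\Omega_j|$ is, on its face, vacuous; extracting from the original (un-normalized) product $\kappa_a\kappa_b\notin J'I$ — before collapsing $Z$ to one element — the extra arithmetic that pins down $|C_P(Q)|$ is where the real work lies. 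Concretely, I would run the fixed-point argument on the \emph{right} $C$-action in parallel with the left one, and use the resulting pair of containments together with $\kappa_x\kappa_y=\kappa_{xy}$ and the injectivity of the product map to force the two-sided $C$-orbit of $\kappa_{xy}$ to be trivial, which is exactly the asserted numerical equality.
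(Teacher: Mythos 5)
Your derivation of the first two conclusions is correct and takes a genuinely different route from the paper. Where the paper pins down, for a fixed $z$, the set $T_z = \{c[Q,z] : cz \sim_Q q_1xq_1^{-1}y \text{ for some } q_1\}$, shows $T_z$ is parametrized by $[Q,x][Q,xy]/[Q,z]$, computes $[Q,z]=[Q,xy]$, and concludes that $p\nmid|T_z|$ iff $[Q,x]\leq[Q,xy]$, you instead observe that the $p$-group $S_x=[Q,x]$ permutes $Z\cap\Omega_j$, and a fixed point gives $\kappa_z$ with $[Q,x]\subseteq[Q,z]$ directly. That is cleaner. You then extract $C_Q(xy)=C_Q(x)\cap C_Q(y)$ and $Q=C_Q(x)C_Q(y)$ from the computation $|[Q,xy]|=|[Q,x]||[Q,y]|$; the paper instead gets this by computing the scalar $\mu$ in $\kappa_x\kappa_y=\mu\kappa_{xy}$ over $\mathbb{Z}$ and counting supports, so that $\mu=|C_Q(xy):C_Q(x)\cap C_Q(y)|$, which must be a unit and hence $1$. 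Both work; yours is more group-theoretic, theirs more arithmetic. One small thing to make explicit in your write-up: the equality $[Q,xy]=[Q,x]\cdot{}^x[Q,y]$ is not a formal consequence of $[q,xy]=[q,x]\cdot{}^x[q,y]$ (which only gives $\subseteq$); the reverse containment uses your normalization $[Q,x]\subseteq[Q,xy]$ via ${}^x[q,y]=[q,x]^{-1}[q,xy]$. You have the ingredient, but should say it.

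The third claim, $|C_P(Q)|=|Q:C_Q(xy)|$, is a genuine gap, and you correctly flag it as the hard part. However, your proposed repair does not close it. Running the fixed-point argument on the right $C$-action with the $p$-group $[Q,y^{-1}]$ (the right stabilizer of $\kappa_y$) produces the containment ${}^x[Q,y]\subseteq[Q,xy]$, which you already have from the left-hand normalization and the commutator identity; the two one-sided fixed-point arguments together only re-derive $|[Q,xy]|=|[Q,x]||[Q,y]|$ and do not bound $C_P(Q)$ from above. What is actually needed is the containment $C_P(Q)\subseteq[Q,xy]$ (the reverse of the automatic $[Q,xy]\leq[P,Q]\leq C_P(Q)$), after which the identity follows by observing that $(q,\xi)\mapsto[q,xy]\xi$ is a transitive $Q$-action on $[Q,xy]$ with point stabilizer $C_Q(xy)$. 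The paper obtains $C_P(Q)\subseteq[Q,xy]$ by asserting that $c\kappa_{xy}=\kappa_{xy}$ for every $c\in C_P(Q)$, i.e.\ that the left $C$-orbit $\Omega_{xy}$ is a singleton; you will need to supply an argument for that (it is the precise content missing from your outline), not merely that $Z\cap\Omega_j$ collapses to $\{\kappa_{xy}\}$, which by itself places no upper bound on $|\Omega_{xy}|$.
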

\begin{proof}
By Lemma 3.3, if $\kappa_x,\kappa_y,\kappa_z \in \Omega$ then $\kappa_z$ occurs in $\kappa_x\kappa_y$ with nonzero coefficient $\mu_{xyz}$ only when $z \sim_Q qxq^{-1}y$ for some $q \in Q$, in which case $\mu_{xyz} = |[Q,x^{-1}] \cap [Q,y]|$. Since $\mu_{xyz}$ is independent of $z$, we see that $\kappa_x\kappa_y = 0 \in J'I$ provided $1 < [Q,x^{-1}] \cap [Q,y]$. So assume $[Q,x^{-1}] \cap [Q,y] = 1$ and $z = qxq^{-1}y$. Using the fact that $S_z = [Q,z]$, write $\Omega_z = \{ c\kappa_z : c \in C/[Q,z] \}$ and $\Omega = \Omega_z \coprod \Omega_z'$ for $\Omega_z' = \Omega - \Omega_z$, so that $I = k\Omega_z \oplus k\Omega_z'$ as a $kC$-module. Since $\kappa_z$ was chosen arbitrarily, $\kappa_x\kappa_y \in J'I$ if and only if $\pi_{\Omega_z}(\kappa_x\kappa_y) \in J'k\Omega_z$. If we define

$$T_z = \{ c[Q,z] \in C/[Q,z] : \text{$cz \sim_Q q_1xq_1^{-1}y$ for some $q_1 \in Q$} \}$$

then $\pi_{\Omega_z}(\kappa_x\kappa_y)$ is a sum of $|T_z|$ many linearly independent elements in $\Omega_z$, each appearing with coefficient 1. Notice that $J'k\Omega_z$ consists of all elements of the form $\sum_{c \in C/[Q,z]} \lambda_c c\kappa_z$ with $\sum \lambda_c = 0$. Therefore, $\pi_{\Omega_z}(\kappa_x\kappa_y) \in J'k\Omega_z$ precisely when $|T_z| \equiv_p 0$. It is hence necessary to obtain a criterion for membership in $T_z$. Suppose $c[Q,z] \in T_z$ and write $cz = q_2(q_1xq_1^{-1}y)q_2^{-1}$. Then $cz = c[q,x]xy = q_2[q_1,x]xyq_2^{-1}$ and using (\ref{PQQ}) we rewrite this as

$$c = [q_1,x][q_2,xy][q,x]^{-1} = [q_1q^{-1},x][q_2,xy] \in [Q,x][Q,xy]$$

Reversing the argument, we see that $c[Q,z] \in T_z$ whenever $c \in [Q,x][Q,xy]$. Thus, $T_z$ consists of all $c[Q,z]$ with $c \in [Q,x][Q,xy]$, and since $[Q,x][Q,xy] \leq C$ a standard counting argument yields

$$|T_z| = \frac{|[Q,x][Q,xy]|}{|[Q,x][Q,xy] \cap [Q,z]|}$$

Thus, $p \mid |T_z|$ precisely when $[Q,x][Q,xy] \not\leq [Q,z]$. Using $(\ref{PQQ})$ and $z = qxq^{-1}y$, we obtain for $q_1 \in Q$ the equality

$$[q_1,qxq^{-1}y] = [q_1,xq^{-1}yq] = [q_1,x]{^x[q_1,q^{-1}yq]} = [q_1,x]{^x[q_1,y]} = [q_1,xy]$$

and so $[Q,z] = [Q,xy]$. So $p \mid |T_z|$ precisely when $[Q,x] \not\leq [Q,xy]$. To summarize, since $\kappa_x,\kappa_y \in \Omega$ were chosen arbitrarily, we see that $I^2 \not\subseteq J'I$ precisely when there exist $x,y \in P-C_P(Q)$ for which $[Q,x^{-1}] \cap [Q,y] = 1$ and $[Q,x] \leq [Q,xy]$.\\

Suppose $x,y \in P-C_P(Q)$ satisfy $[Q,x^{-1}] \cap [Q,y] = 1$ and $[Q,x] \leq [Q,xy]$. So for every $q_1 \in Q$ there is $q_2 \in Q$ for which $[q_1,x] = [q_2,xy] = [q_2,x]{^x[q_2,y]}$. By (\ref{PQQ}) we see $[q_1^{-1}q_2,x^{-1}] = [q_2,y]$ is an element of $[Q,x^{-1}] \cap [Q,y] = 1$. That is, $q_2 \in C_Q(y)$ and $q_1^{-1}q_2 \in C_Q(x)$, so that $q_1 \in C_Q(y)C_Q(x)$ and hence $Q = C_Q(y)C_Q(x)$. For $q \in Q$ write $q = q_1q_2$ with $q_2 \in C_Q(x)$ and $q_1 \in C_Q(y)$. Then $qxq^{-1}y = q_1xyq_1^{-1} \sim_Q xy$. This implies that $\kappa_x\kappa_y = \mu\kappa_{xy}$ for some $\mu \in k$. In fact, this argument shows that the equality $\kappa_x\kappa_y = \mu\kappa_{xy}$ holds when we work over $\mathbb{Z}$ instead of $k$, where we now have $\mu \in \mathbb{N}$. This has the advantage of demonstrating $|\mbox{Supp}(\kappa_x)||\mbox{Supp}(\kappa_y)| = \mu|\mbox{Supp}(\kappa_{xy})|$. For $w \in P$ we know $|\mbox{Supp}(\kappa_w)| = |Q:C_Q(w)|$. Since $C_Q(x) \cap C_Q(y) \leq C_Q(xy)$, we compute $\mu = |C_Q(xy):C_Q(x) \cap C_Q(y)|$. Since $\kappa_x\kappa_y \not\in J'I$, we know $\kappa_x\kappa_y \not= 0$ and thus $C_Q(xy) = C_Q(x) \cap C_Q(y)$.\\

Moreover, for every $c \in C$ we get $c\kappa_{xy} = \kappa_{xy}$, and hence $cxy = q'xyq'^{-1}$ for some $q' \in Q$. This yields $c = [q',xy]$ and hence $C_P(Q) \leq [Q,xy]$. Of course $C_P(Q) = [Q,xy]$ since $Q$ centralizes $[P,Q]$. By (\ref{PQQ}) the map $Q \times [Q,xy] \rightarrow [Q,xy]$ given by $(q,\xi) \mapsto [q,xy]\xi$ yields a transitive action of $Q$ on $[Q,xy]$ with point stabilizer $C_Q(xy)$. We conclude that $|C_P(Q)| = |Q:C_Q(xy)|$, and so the theorem is established.\\
\end{proof}

Our proof in Theorem 3.4 actually allows us to establish a strong converse.

\begin{Cor}Suppose $Q \leq P$ and there are $x,y \in P-C_P(Q)$ for which $Q = C_Q(x)C_Q(y)$ and $C_Q(xy) = C_Q(x) \cap C_Q(y)$. Then $I^2 \not\subseteq J'I$ where $kP^Q = kC \ltimes I$ and $J' = J(kC)$.\end{Cor}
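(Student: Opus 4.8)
The plan is to produce, under the stated group-theoretic hypotheses, a specific element of $I^2$ that does not lie in $J'I$; guided by running the proof of Theorem 3.4 backwards, the natural candidate is $\kappa_x\kappa_y$ for the given $x,y \in P - C_P(Q)$.

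First I would show that the single hypothesis $Q = C_Q(x)C_Q(y)$ already forces $\mathrm{Supp}(\kappa_x\kappa_y) \subseteq {}^Q(xy)$. A generic term ${}^{q_1}x \cdot {}^{q_2}y$ in the $\mathbb{Z}P$-expansion of $\kappa_x\kappa_y$ equals ${}^{q_1}(x \cdot {}^q y)$ with $q = q_1^{-1}q_2$, so it is $Q$-conjugate to $x \cdot {}^q y$; writing $q = cd$ with $c \in C_Q(x)$ and $d \in C_Q(y)$ gives ${}^q y = {}^c y$, and conjugating $x \cdot {}^c y = xcyc^{-1}$ by $c^{-1} \in Q$ returns $xy$. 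This is exactly the conjugation bookkeeping performed inside the proof of Theorem 3.4, and --- the point that matters, since this corollary drops the assumption $[P,Q,Q]=1$ --- that portion of the argument uses only the factorization of $Q$, not the commutator identities $(\ref{PQQ})$. Since $\kappa_x\kappa_y$ is a $Q$-invariant element of $\mathbb{Z}P$ whose support lies in the single orbit ${}^Q(xy)$, it is a nonnegative integer multiple $\mu\kappa_{xy}$; and $xy \notin C_P(Q)$, for otherwise $C_Q(xy) = Q$ would give $Q = C_Q(x) \cap C_Q(y) \le C_Q(x)$ and hence $x \in C_P(Q)$, contrary to hypothesis. Thus $\kappa_{xy} \in \Omega$.

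Next I would compute $\mu$. Summing coefficients on both sides of $\kappa_x\kappa_y = \mu\kappa_{xy}$ gives $|Q:C_Q(x)|\,|Q:C_Q(y)| = \mu\,|Q:C_Q(xy)|$, and combining $|C_Q(x)C_Q(y)| = |Q|$ with the hypothesis $C_Q(xy) = C_Q(x) \cap C_Q(y)$ this reduces to $\mu = |C_Q(xy) : C_Q(x) \cap C_Q(y)| = 1$ --- again exactly the computation from Theorem 3.4. Hence $\kappa_x\kappa_y = \kappa_{xy}$ already over $\mathbb{Z}$, so the same equality holds in $kP^Q$ after reduction mod $p$; in particular $\kappa_x\kappa_y \neq 0$.

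Finally I would observe that a single basis element of $\Omega$ never lies in $J'I$. Decomposing $\Omega$ into its $C$-orbits $\Omega = \coprod_j \Omega_j$ as in the proof of Proposition 3.1 gives $I = \bigoplus_j k\Omega_j$ and $J'I = \bigoplus_j J'k\Omega_j$, where $J'k\Omega_j$ is the augmentation submodule of the permutation $kC$-module $k\Omega_j$, consisting of the elements whose coefficient sum vanishes. Since $\kappa_{xy}$ is supported in one orbit block, with coefficient sum $1$ there, it lies outside $J'I$. Therefore $\kappa_x\kappa_y = \kappa_{xy} \in I^2 \setminus J'I$, so $I^2 \not\subseteq J'I$. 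I do not anticipate a genuine obstacle: the proof is essentially that of Theorem 3.4 read in reverse, the only thing worth double-checking being that the conjugation step and the index computation in that proof really do go through on the strength of $Q = C_Q(x)C_Q(y)$ alone, with no recourse to the condition $[P,Q,Q]=1$.
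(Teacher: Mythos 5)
Your proof is correct and is essentially the extraction from the proof of Theorem 3.4 that the paper intends when it says "Our proof in Theorem 3.4 actually allows us to establish a strong converse": the conjugation bookkeeping $qxq^{-1}y = q_1(xy)q_1^{-1}$ and the count $\mu = |C_Q(xy):C_Q(x)\cap C_Q(y)| = 1$ use only the factorization $Q = C_Q(x)C_Q(y)$ and the augmentation over $\mathbb{Z}$, not the $[P,Q,Q]=1$ identities. The two small details you supplied that Theorem 3.4's proof leaves implicit --- that $xy \notin C_P(Q)$ so $\kappa_{xy} \in \Omega$, and that a single basis element of $\Omega$ has coefficient sum $1$ and so escapes the augmentation submodule $J'k\Omega_j$ --- are exactly what is needed to make the converse airtight.
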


\begin{remark}It is particularly easy to compute $J^d$ when (*) and (**) hold. For instance, since $J = J' \oplus I$ we compute $J^2 = J'^2 \oplus (J'I + IJ' + I^2) = J'^2 \oplus J'I$, and more generally $J^d = J'^d \oplus J'^{d-1}I$ for all $d \geq 1$. This is the same conclusion as reached in ~\cite{Towers} for $P$ of class 2 and $[P,Q]$ cyclic.\end{remark}

We will also need in section 5 the following fact.

\begin{lemma}Suppose $Q \leq P$ and $kP^Q = kC \ltimes I$ where $I$ has basis $\Omega$ and $\Omega = \coprod_{i=1}^e \Omega_i$. If $p$ is odd then $e \geq 2$.\end{lemma}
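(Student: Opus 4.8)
The plan is to argue by contradiction. Suppose $e = 1$, so that $C = C_P(Q)$ acts transitively on $\Omega$ by left multiplication (as in the proof of Proposition 3.1 we take $\Omega = \coprod_i \Omega_i$ to be the orbit decomposition under the left action of $C$; the argument for the right action is identical, using $\kappa_x c = \kappa_{xc}$). We may assume $I \neq 0$, i.e.\ $Q \not\leq ZP$, since otherwise $\Omega$ is empty and there is nothing to index. Fix $\kappa_x \in \Omega$ and set $p^a = |\mathrm{Supp}(\kappa_x)| = |Q : C_Q(x)|$; since $x \notin C$ we have $a \geq 1$.

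First I would observe that left multiplication by $c \in C$ carries $\kappa_x$ to $\kappa_{cx}$ with $\mathrm{Supp}(\kappa_{cx}) = c\,\mathrm{Supp}(\kappa_x)$, so every element of the single $C$-orbit $\Omega$ has support of size $p^a$. The supports $\mathrm{Supp}(\kappa_y)$ for $\kappa_y \in \Omega$ partition $P \setminus C$, so counting gives $|P| - |C| = |\Omega| \cdot p^a$. On the other hand, writing $S_x$ for the stabilizer of $\kappa_x$ in $C$ (a $p$-group, since $S_x \leq C$), transitivity yields $|\Omega| = |C : S_x| = |C|/|S_x|$. Combining these two identities and cancelling $|C|$ gives
$$|P:C| - 1 = \frac{p^a}{|S_x|}.$$

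Now I would finish with elementary arithmetic. The right-hand side is a quotient of two powers of $p$ and is a positive integer, hence it equals $p^j$ for some integer $j \geq 0$. The left-hand side is $|P:C| - 1$, and $|P:C|$ is a power of $p$ that is $>1$ because $C \neq P$; therefore $|P:C| - 1$ is a positive integer not divisible by $p$. A positive integer that is simultaneously a power of $p$ and coprime to $p$ must equal $1$, so $|P:C| - 1 = 1$. This forces $|P:C| = 2$, and in particular $p = 2$, contradicting the hypothesis that $p$ is odd. Hence $e \geq 2$.

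There is no genuine obstacle here; the only steps needing a moment's care are the remark that all members of one $C$-orbit in $\Omega$ have equal support size (immediate from $\mathrm{Supp}(\kappa_{cx}) = c\,\mathrm{Supp}(\kappa_x)$) and the number-theoretic fact that $p^m - 1$ is a power of $p$ only when $p^m = 2$. It is worth noting that the same computation shows that when $p = 2$ the conclusion $e \geq 2$ can fail only if $|P:C| = 2$.
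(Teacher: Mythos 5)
Your proof is correct and follows essentially the same route as the paper: assume $e=1$, use transitivity to see all supports in $\Omega$ have a common size, count $|P|-|C| = |\Omega|\cdot|\mathrm{Supp}(\kappa_x)|$ together with $|\Omega|=|C:S_x|$ to get $|S_x|(|P:C|-1)=|\mathrm{Supp}(\kappa_x)|$, and then observe this is impossible for odd $p$ since $|P:C|-1>1$ is coprime to $p$ while the other two factors are $p$-powers. The rearrangement into a quotient and the explicit identification of the borderline case $|P:C|=2$ when $p=2$ are cosmetic, not substantive, differences.
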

\begin{proof}
Clearly $e \geq 1$ since $Q$ is non-central. Suppose for the sake of contradiction that $e = 1$. In other words, $C$ acts transitively on $\Omega$ so that $|\Omega| = |C:S_x|$ for any fixed $\kappa_x \in \Omega$. Notice that $|\mbox{Supp}(\kappa_y)| = |\mbox{Supp}(\kappa_x)|$ for all $y \in P-C$, and so $|\Omega| = (|P|-|C|)/|\mbox{Supp}(\kappa_x)|$. This yields $|S_x|(|P:C|-1) = |\mbox{Supp}(\kappa_x)|$. But $|S_x|, |P:C|,$ and $|\mbox{Supp}(\kappa_x)|$ are all powers of $p$, and $|P:C|-1 > 1$ since $Q$ is non-central and $p$ is odd, and so we have a contradiction.
\end{proof}

\section{Extra Special $p$-Groups}

We compute the dimensions of the successive radical layers $J^d(kP^Q)/J^{d+1}(kP^Q)$ of $kP^Q$ for $P$ an extra special $p$-group and $Q$ an arbitrary subgroup of $P$. These numbers have the surprising property that they are symmetric in $d$ when $p = 2$ or $p = 3$ and $Q$ is well-chosen. This symmetry holds for group algebras of arbitrary $p$-groups by Jennings' Theorem, but does not hold for centralizer algebras of $p$-groups in general. The Loewy series of $kP^Q$ will be encapsulated in terms of a Poincar\'{e} polynomial. Recall as in ~\cite{BensonI} that if $P$ is a $p$-group with subgroups $\{ \kappa_i \}$ then the Loewy series for $kP$ has the Poincar\'{e} polynomial

\begin{equation}\label{q}
q(t) = \prod_{n = 1}^{\infty} (1 + t^n + t^{2n} + \cdots + t^{(p-1)n})^{\dim \kappa_n/\kappa_{n+1}}
\end{equation}

Similarly, the Loewy series of $k[P/R]$ has the Poincar\'{e} polynomial

\begin{equation}\label{r}
r(t) = \prod_{n = 1}^{\infty} (1 + t^n + t^{2n} + \cdots + t^{(p-1)n})^{\dim \kappa_n/(\kappa_n \cap R)\kappa_{n+1}}
\end{equation}

\begin{thm}Suppose $P$ is an extra special $p$-group and $Q \leq P$. If $C = C_P(Q)$ is not elementary abelian then $kP^Q$ has the Loewy series given by

$$p(t) = (1 + t + \cdots + t^{p-1})^{\log_p |C|-1}((|P:C|-1)t + 1 + t^2 + t^4 + t^6 + \cdots + t^{2(p-1)})$$

If $C = C_P(Q)$ is elementary abelian then $kP^Q$ has the Loewy series given by

$$p(t) = (1 + t + \cdots + t^{p-1})^{\log_p |C|-1}((|P:C|-1)t + 1 + t + t^2 + t^3 + \cdots + t^{p-1})$$
\end{thm}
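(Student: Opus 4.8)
The plan is to feed the decomposition $kP^Q = kC \ltimes I$ (writing $C = C_P(Q)$ and $J' = J(kC)$) into the Jennings and Alperin formulas $(\ref{q})$, $(\ref{r})$. Since an extra special $p$-group has nilpotency class $2$, Corollary 3.2 gives condition (*), namely $J'I = IJ'$. Next I claim $I^2 = 0$. For $\kappa_x \in \Omega$ the assignment $q \mapsto [q,x]$ is a homomorphism $Q \to ZP$ (nilpotency class $2$) whose image is a nontrivial subgroup of $ZP = [P,P]$, a group of order $p$; hence $[Q,x] = ZP$, and also $[P,Q,Q] = 1$ because $[P,Q] \leq ZP$ is central. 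By Lemma 3.3 each basis element $\kappa_z$ occurs in $\kappa_x\kappa_y$ with coefficient $\mu_{xyz} = |[Q,x^{-1}] \cap [Q,y]| = |ZP| = p = 0$ in $k$, while $\kappa_x\kappa_y$ has no $kC$-component since $I$ is an ideal (Proposition 2.1); therefore $\kappa_x\kappa_y = 0$ and $I^2 = 0$. In particular (**) holds, so by the Remark after Corollary 3.5 we get $J^d(kP^Q) = J'^d \oplus J'^{d-1}I$ for all $d \geq 1$.

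From this the Loewy-series Poincar\'{e} polynomial of $kP^Q$ is $p(t) = q(t) + (|P:C|-1)\,t\,r(t)$, where $q(t)$ is the Jennings polynomial $(\ref{q})$ of $kC$ and $r(t)$ is the Alperin polynomial $(\ref{r})$ of $k[C/ZP]$, once one knows that $I \cong k[C/ZP]^{\,|P:C|-1}$ as $kC$-modules. To see the latter, note $ZP \leq C$ and, by Lemma 3.3, $S_x^{\dagger} = [Q,x] = ZP$, so the stabilizer of $\kappa_x \in \Omega$ under left multiplication by $C$ is $S_x = C \cap ZP = ZP$; hence every left $C$-orbit in $\Omega$ is isomorphic to $C/ZP$ as a $C$-set, so each $k\Omega_j \cong k[C/ZP]$. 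A count finishes the identification: $|\mathrm{Supp}(\kappa_x)| = |Q:C_Q(x)| = |ZP| = p$ for $x \in P - C$, so $|\Omega| = (|P|-|C|)/p$, and dividing by the common orbit size $|C:ZP| = |C|/p$ gives $|P:C|-1$ orbits. The shift by $t$ in the displayed formula reflects $J^d/J^{d+1} \cong J'^d/J'^{d+1} \oplus J'^{d-1}I/J'^d I$ for $d \geq 1$ (and $J^0/J^1 \cong k$).

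It remains to evaluate $q(t)$ and $r(t)$ via the characteristic subgroups $\kappa_n$ of $C$. Here $\kappa_1 = C$ and $\kappa_2 = \Phi(C) = [C,C]\,C^p$; since $[C,C] \leq [P,P] = ZP$ and, for any extra special group, $C^p \leq ZP$, we have $\kappa_2 \in \{1, ZP\}$, and $\kappa_2 = 1$ exactly when $C$ is elementary abelian. If $C$ is elementary abelian then $\kappa_n = 1$ for $n \geq 2$, so $q(t) = (1+t+\cdots+t^{p-1})^{\log_p|C|}$ and, as $(\kappa_1 \cap ZP)\kappa_2 = ZP$, $r(t) = (1+t+\cdots+t^{p-1})^{\log_p|C|-1}$; substituting into $p(t)$ gives the second formula. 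If $C$ is not elementary abelian then $\kappa_2 = ZP$ has order $p$, and $\kappa_n = 1$ for $n \geq 3$, so the only nontrivial layers are $\kappa_1/\kappa_2$ (dimension $\log_p|C|-1$) and $\kappa_2/\kappa_3 \cong ZP$ (dimension $1$, contributing the level-$2$ factor $1 + t^2 + t^4 + \cdots + t^{2(p-1)}$); thus $q(t) = (1+t+\cdots+t^{p-1})^{\log_p|C|-1}(1 + t^2 + \cdots + t^{2(p-1)})$ and again $r(t) = (1+t+\cdots+t^{p-1})^{\log_p|C|-1}$, and substituting gives the first formula.

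The main obstacle is this last step: the Jennings analysis of $C$, and above all the claim $\kappa_n = 1$ for $n \geq 3$ in the non-elementary-abelian case. This uses that $\kappa_2 = ZP$ is central of exponent $p$ in $C$ (so the surviving commutator generators of $\kappa_3$ all vanish), together with triviality of the $p$-th-power generators $\gamma^p$, $\gamma \in \kappa_r$, with $r < 3$ and $pr \geq 3$; the latter is automatic when $p = 2$ (only $r = 2$ occurs and $(ZP)^2 = 1$), but for odd $p$ it also requires $(\kappa_1)^p = C^p = 1$, i.e. that $C$ has exponent $p$ — so this is the point at which one must pin down which extra special group and which $C$ are being allowed. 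Once the structure $\kappa_2 = ZP$, $\kappa_n = 1$ $(n \geq 3)$ is established, translating it through $(\ref{q})$ and $(\ref{r})$ and collecting terms is routine bookkeeping.
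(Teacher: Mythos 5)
Your argument follows the paper's proof very closely: the decomposition $kP^Q = kC \ltimes I$, condition (*) from Corollary 3.2, $I^2 = 0$ via Lemma 3.3, the formula $J^d(kP^Q) = J'^d \oplus J'^{d-1}I$, the identification of $I$ as $|P:C|-1$ copies of $k[C/ZP]$, and the evaluation of $q(t)$ and $r(t)$ by Jennings' Theorem and Alperin's refinement. The filled-in details (the homomorphism $q \mapsto [q,x]$, the orbit count, the observation that $\kappa_x\kappa_y$ has no $kC$-component because $I$ is an ideal) are all correct.

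However, the concern you raise at the end is not a mere caveat — it identifies a genuine gap, one the paper's own proof shares. When $p$ is odd, the assertion $\kappa_n(C) = 1$ for $n \geq 3$ requires $C^p = 1$; the paper asserts ``$\kappa_i(C) = 1$ for $i > 2$'' for all non-elementary-abelian $C$ without noting this hypothesis. If $C$ contains an element of order $p^2$ then $C^p = ZP$, so $\kappa_n(C) = ZP$ for $2 \leq n \leq p$ and $\kappa_{p+1}(C) = 1$; the factor $1 + t^2 + t^4 + \cdots + t^{2(p-1)}$ in the statement should then be $1 + t^p + t^{2p} + \cdots + t^{p(p-1)}$. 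These coincide when $p = 2$ (so the theorem and its proof are fine there) but diverge for odd $p$. A concrete counterexample: take $P = \text{Mod}_{27}$ of exponent $9$ and $Q = \lr{a}$ with $|a| = 9$. Then $C = \lr{a} \cong \mathbb{Z}_9$ is cyclic of order $9$ (hence not elementary abelian), $q(t) = 1 + t + \cdots + t^8$ has degree $8$, and $p(t)$ has degree $8$ and predicts $\ell\ell(kP^Q) = 9$, agreeing with Theorem 7.1 ($\ell\ell = p^{n-1} = 9$ since $C_P(Q) = \lr{a}$). But the theorem's displayed formula gives a polynomial of degree $\log_3|C| - 1 + 2(p-1) = 1 + 4 = 5$ in the cyclotomic factor and then overall degree $1 + 2 + 4 = $ wait, $(1+t+t^2)(2t + 1 + t^2 + t^4)$ has degree $6$, predicting $\ell\ell = 7$, which contradicts Theorem 7.1. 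So Theorem 4.1 is incorrect as stated for odd $p$; the ``not elementary abelian'' case must be subdivided according to whether $C$ has exponent $p$ or contains an element of order $p^2$.
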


\begin{proof}

As usual, write $kP^Q = kC \ltimes I$ where $I$ has basis $\Omega$. If $x \in P-C$ and $q \in Q$, then ${^qx} = [q,x]x \in P'x$, and so ${^Qx} \subseteq P'x$. Since $|{^Qx}|$ is a power of $p$ larger than 1 and $|P'| = p$, we see that $|{^Qx}| = p$, and hence $|\mbox{Supp}(\kappa_x)| = p$ for $\kappa_x \in \Omega$. Therefore $|\Omega| = (|P|-|C|)/p$. Now write $\Omega = \Omega_1 \cup \cdots \Omega_e$ as a union of orbits under the action of $C$ on $\Omega$. If $\kappa_{x_i}$ is a representative of $\Omega_i$ then the stabilizer in $C$ of $\kappa_{x_i}$ equals $C \cap [Q,x_i] = ZP$ since $P$ has class 2. Moreover, $\Omega_i \simeq k[C/ZP]$ and so $\Omega_i \simeq \Omega_1$ as $kC$-modules for all $i$. Since $e|C:ZP| = |\Omega|$, we obtain $e = |P:C|-1$.\\

Since $P$ has class 2, we know that $J'I = IJ'$ by Corollary 3.2. Furthermore, $I^2 = 0 \subseteq J'I$ by Lemma 3.3. By the remarks following Corollary 3.5, we conclude that $J^d(kP^Q) = J'^d \oplus J'^{d-1}I$. In particular, if the Loewy series for $kC$ has the Poincar\'{e} polynomial $q(t)$ and the Loewy series for $k\Omega_1$ has the Poincar\'{e} polynomial $r(t)$, then the Loewy series for $kP^Q$ has the Poincar\'{e} polynomial $p(t) = q(t)+(|P:C|-1)tr(t)$. So it suffices to compute $q(t)$ and $r(t)$. This breaks down into two cases: $C$ not elementary abelian or $C$ elementary abelian.\\

Suppose $C$ is not elementary abelian. Then $\kappa_2(C) = \Phi(C) = ZP$ and $\kappa_i(C) = 1$ for $i > 2$. Moreover, $\dim \kappa_1/\kappa_2 = \dim \kappa_1/(\kappa_1 \cap ZP)\kappa_2 = \log_p |C| - 1$, $\dim \kappa_2/\kappa_3 = 1$, and $\dim \kappa_2/(\kappa_2 \cap ZP)\kappa_3 = 0$. From (\ref{q}) and (\ref{r}) we get

$$q(t) = (1 + t + \cdots + t^{p-1})^{\log_p |C|-1}(1+t^2+\cdots+t^{2(p-1)})$$

$$r(t) = (1+t+\cdots+t^{p-1})^{\log_p |C|-1}$$

and thus $p(t)$ is as claimed. If $C$ is elementary abelian, then the situation is even simpler: $\kappa_1(C) = C$ and $\kappa_i(C) = 1$ for $i > 1$, and hence $\dim \kappa_1/\kappa_2 = \log_p |C|$ and $\dim \kappa_1/(\kappa_1 \cap ZP)\kappa_2 = \log_p |C| - 1$. So (\ref{q}) and (\ref{r}) yield

$$q(t) = (1 + t + \cdots + t^{p-1})^{\log_p |C|}$$

$$r(t) = (1+t+\cdots+t^{p-1})^{\log_p |C|-1}$$

and again $p(t)$ is as claimed.
\end{proof}

In particular, we obtain the following symmetry.

\begin{Cor}If $P$ is an extra special $p$-group and $Q \leq P$ is non-central, then $kP^Q$ has a symmetric Loewy series in precisely the following cases.

\begin{enumerate}

\item $p=2$ and $C_P(Q)$ is not elementary abelian.

\item $p=3$ and $C_P(Q)$ is elementary abelian.

\end{enumerate}

In particular, if $y \in P-ZP$ then $kP^{\lr{y}}$ has a symmetric Loewy series when: $p = 2$ and $|P| \geq 32$; $|P| = 8$ and $|y| = 4$; $|P| = 27$ and $P$ has exponent 3; or $|P| = 27$, $P$ has exponent 9, and $|y| = 3$.
\end{Cor}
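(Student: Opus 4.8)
The plan is to read off from Theorem 4.2 when the Poincar\'e polynomial $p(t)$ of the Loewy series is self-reciprocal (palindromic), and then to check that each condition in the concluding list forces the appropriate hypothesis of that theorem. Write $C = C_P(Q)$ and $\Phi(t) = 1 + t + \cdots + t^{p-1}$. In both cases of Theorem 4.2 one has $p(t) = \Phi(t)^m h(t)$ with $m = \log_p|C| - 1 \ge 0$ and $h(t)$ the explicit second factor, and all coefficients are positive so $\deg p(t) = m(p-1) + \deg h(t)$. Since $\Phi(t)$ is self-reciprocal and $\mathbb{Z}[t]$ is a domain, the identity $t^{\deg p(t)}p(1/t) = p(t)$ becomes $\Phi(t)^m\cdot t^{\deg h(t)}h(1/t) = \Phi(t)^m h(t)$, and cancelling $\Phi(t)^m$ shows $p(t)$ is self-reciprocal iff $h(t)$ is. Also, $Q$ noncentral forces $C \subsetneq P$, so $|P:C| \ge p$.

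The heart of the argument is then a coefficient comparison in $h(t)$, essentially between $t^1$ and $t^{\deg h(t) - 1}$. When $C$ is not elementary abelian, $h(t) = 1 + (|P:C|-1)t + t^2 + t^4 + \cdots + t^{2(p-1)}$ has degree $2(p-1)$: for $p = 2$ this is the palindrome $1 + (|P:C|-1)t + t^2$, while for $p \ge 3$ the coefficient of $t^{2p-3}$ vanishes (the exponent being odd and exceeding $1$) whereas the coefficient of $t$ is $|P:C| - 1 \ge 1$, so $h(t)$ is not self-reciprocal. When $C$ is elementary abelian, $h(t) = 1 + |P:C|\,t + t^2 + \cdots + t^{p-1}$ has degree $p - 1$: for $p = 2$ it is $1 + |P:C|\,t$ with $|P:C| \ge 2$, not a palindrome; for $p = 3$ it is the palindrome $1 + |P:C|\,t + t^2$; and for $p \ge 5$ the coefficient of $t^{p-2}$ is $1$ while that of $t$ is $|P:C| \ge p \ge 5$. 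Hence $p(t)$ is self-reciprocal exactly in the two stated cases, and since every pair $(P,Q)$ has $C$ either elementary abelian or not, the ``precisely'' claim follows.

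For the concluding list, fix $y \in P - ZP$; the map $x \mapsto [x,y]$ is a surjective homomorphism $P \to P' = ZP$, so $C := C_P(\lr y) = C_P(y)$ has index $p$ in $P$. If $p = 2$ and $|P| = 2^{2n+1} \ge 32$ (so $n \ge 2$), then $|C| = 2^{2n} > 2^{n+1}$, which exceeds the order of any abelian subgroup of an extra special group of order $2^{2n+1}$ (totally isotropic subspaces of the symplectic space $P/ZP$ have dimension at most $n$); thus $C$ is not elementary abelian and case 1 applies. If $|P| = 8$ and $|y| = 4$, then $\lr y \cong \mathbb{Z}/4$ lies in $C$, so $C$ is not elementary abelian and (as $p = 2$) case 1 applies. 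If $|P| = 27$ and $P$ has exponent $3$, then $C$ is a group of order $9$ of exponent dividing $3$, hence elementary abelian, and (as $p = 3$) case 2 applies. Finally, if $|P| = 27$, $P$ has exponent $9$, and $|y| = 3$, then $|C| = 9$, so $C \cong \mathbb{Z}/9$ or $(\mathbb{Z}/3)^2$; were $C$ cyclic, its unique subgroup of order $3$ would be $\lr y = \{ z^3 : z \in C\}$, but $z^3 \in ZP$ for every $z \in P$ (since $[z^3,w] = [z,w]^3 = 1$), forcing $y \in ZP$, a contradiction; hence $C$ is elementary abelian and case 2 applies.

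I do not expect a serious obstacle; the only mildly delicate points are the reduction in the first paragraph (that self-reciprocity of $\Phi(t)^m h(t)$ is equivalent to that of $h(t)$) and, in the last paragraph, excluding $C_P(y) \cong \mathbb{Z}/9$ in the exponent-$9$ case of order $27$.
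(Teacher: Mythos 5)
Your proof is correct and follows essentially the same approach as the paper: reduce symmetry of $p(t)$ to symmetry of the second factor $h(t)$ (you use cancellation in the domain $\mathbb{Z}[t]$ via the functional equation $t^{\deg p}p(1/t)=p(t)$ where the paper uses induction on degree, but this is the same reduction), then compare coefficients of $t$ and $t^{\deg h - 1}$. The only substantive difference is that you spell out all three ``small cases'' of the final list, which the paper dismisses with ``follow similarly,'' and you use the symplectic-form bound on abelian subgroups for the $|P|\ge 32$ case where the paper uses a direct order count of $C_P(x)\cap C_P(y)$; both are routine and correct.
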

\begin{proof}

First observe that if $n \in \mathbb{N}$ and $s(t)$ is a polynomial for which $(1+t+\cdots+t^n)s(t)$ is symmetric in $t$, then $s(t)$ is also symmetric in $t$. This is established by induction on $\deg s(t)$. Conversely, if $s(t)$ is symmetric in $t$ then so is $(1+t+\cdots+t^n)s(t)$. So let $p(t)$ be the Poincar\'{e} polynomial for the Loewy series $kP^Q$. Then $p(t)$ is symmetric in $t$ for $C_P(Q)$ not elementary abelian iff

$$1 + (|P:C_P(Q)|-1)t + t^2 + t^4 + t^6 + \cdots + t^{2(p-1)}$$

is symmetric in $t$. Since $|P:C_P(Q)|-1 \geq 1$, this occurs precisely when $p = 2$. Moreover, $p(t)$ is symmetric in $t$ for $C_P(Q)$ elementary abelian iff

$$1 + (|P:C_P(Q)|-1)t + t^2 + t^3 + \cdots + t^{p-1}$$

is symmetric in $t$. This occurs precisely when $p = 3$. This establishes the first assertion. Now assume that $p = 2$, $y \in P-ZP$, and $|P| \geq 32$. We need to show that $C_P(y)$ is not elementary abelian. In Theorem 4.1 we proved $|{^Py}| = 2 = |P:C_P(y)|$. So assume $C_P(y)$ is abelian, so that $P = \lr{x}C_P(y)$ and $C_P(x) \cap C_P(y) \leq ZP$ for $x \in P-C_P(y)$. Since $|P:C_P(x)| = 2$ and $P = C_P(x)C_P(y)$ we get

$$|Z(P)| \geq |C_P(x) \cap C_P(y)| = \frac{|C_P(x)||C_P(y)|}{|C_P(x)C_P(y)|} = \frac{|P|}{4} \geq 8$$

This contradiction shows that $C_P(y)$ is non-abelian, thus establishing the symmetry of the Loewy series of $kP^{\lr{y}}$. The three small cases follow similarly.
\end{proof}

\begin{remark}If $P$ is a $p$-group then Jennings' Theorem yields the corollary that the socle series and radical series of $kP$ coincide in reverse order. Unfortunately, Proposition 2.2 precludes this possibility.
\end{remark}

\section{Symmetry of Loewy Structure}

We provide a partial answer to the question raised in section 4 of precisely when $kP^Q$ has a symmetric Loewy series. More precisely, assuming that $kP^Q$ satisfies conditions (*) and (**), we provide necessary and sufficient conditions for the Loewy series of $kP^Q$ to be symmetric in terms of the group-theoretic structure of $P$ and $Q$. Most interestingly, we see that symmetry arises only when $p = 2,3$.

\begin{prop}Assume $P$ is a $p$-group with non-central subgroup $Q$, $C = C_P(Q)$, and $kP^Q$ satisfies conditions (*) and (**). Then $kP^Q$ has a symmetric Loewy series if and only if one of the following conditions holds:

\begin{enumerate}

\item[(a)] $p = 2$ and whenever $x \in P-C$ either (i) $S_x$ is an elementary abelian subgroup of $C$ of order 4 that intersects $\Phi(C)$ trivially or (ii) $S_x$ is a subgroup of $C$ of order 2 contained in $\Phi(C)$ that intersects $[C,\Phi(C)]\mho^1(\Phi(C))$ trivially.\\

\item[(b)] $p = 2$, there is $x^* \in P-C$ such that $S_{x^*} = 1$, and $S_x$ is a subgroup of $C$ of order 2 that intersects $\Phi(C)$ trivially whenever $x \in P-C$ with $\kappa_x \not= \kappa_{x^*}$.\\

\item[(c)] $p = 3$ and $S_x$ is a subgroup of $C$ of order 3 that intersects $\Phi(C)$ trivially whenever $x \in P - C$.

\end{enumerate}

\end{prop}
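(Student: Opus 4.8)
Since $kP^Q$ satisfies (*) and (**), the remark following Corollary~3.5 gives $J^d(kP^Q) = J'^d \oplus J'^{d-1}I$ for all $d \ge 1$. Writing $I = \bigoplus_{j=1}^{e} k\Omega_j$ as a sum of $kC$-modules with $k\Omega_j \simeq k[C/S_{x_j}]$ and reading off the dimensions of the radical layers, the Poincar\'e polynomial of the Loewy series of $kP^Q$ is
$$p(t) = q(t) + t\sum_{j=1}^{e} r_j(t),$$
where $q(t)$ is~(\ref{q}) applied to $C$ and $r_j(t)$ is the analogue of~(\ref{r}) for the $kC$-module $k[C/S_{x_j}]$. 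The quotient $f_j(t) := q(t)/r_j(t)$ is the polynomial $\prod_{n\ge 1}(1 + t^n + \cdots + t^{(p-1)n})^{c_{nj}}$ with $c_{nj} = \dim(\kappa_n(C)\cap S_{x_j})/(\kappa_{n+1}(C)\cap S_{x_j})$, where $\kappa_n(C)$ denotes the Jennings subgroups of $C$. Thus $f_j$ is palindromic (i.e.\ symmetric in $t$), $\log_p|S_{x_j}| = \sum_n c_{nj}$, the degree $D_j := \deg f_j = \sum_n (p-1)nc_{nj}$ vanishes exactly when $S_{x_j}=1$, and the least positive value $D_j$ can take is $p-1$, attained only when $c_{1j}=1$ and every other $c_{nj}=0$. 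Finally $q$ and $r_j = q/f_j$ are both palindromic, of degrees $D := \deg q$ and $D - D_j$ respectively.

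For the ``if'' direction, a direct check of the Jennings data shows that in each of (a), (b), (c) every orbit has $D_j = 2$, with the lone exception of the orbit of $x^*$ in (b), where $S_{x^*}=1$ and $f_j=1$: in (a) one gets $f_j=(1+t)^2$ for type-(i) orbits and $f_j=1+t^2$ for type-(ii) orbits, using $\kappa_3(C)=[C,\Phi(C)]\mho^1(\Phi(C))$ when $p=2$; in (c) one gets $f_j=1+t+t^2$; in (b) the remaining orbits give $f_j=1+t$. Hence each $tr_j(t)$ has degree $D-1$, except the $x^*$-term which has degree $D+1$. Substituting $t\mapsto 1/t$ and using that $q$ and the $r_j$ are palindromic of the indicated degrees immediately yields $t^{\deg p}p(1/t)=p(t)$; alternatively one writes $p(t)$ as a palindromic multiple of $q(t)$ divided by a palindromic polynomial of degree $2$ and invokes the elementary fact, used already in Corollary~4.2, that $(1+t+\cdots+t^n)s(t)$ is palindromic if and only if $s(t)$ is.

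For the ``only if'' direction, assume $p(t)$ is palindromic and set $m=\min_j D_j$. Comparing $\deg q=D$ with $\deg(tr_j)=D+1-D_j$ determines $\deg p$ and gives three cases. In each, expand $p(t)=t^{\deg p}p(1/t)$ using $q(1/t)=t^{-D}q(t)$ and $r_j(1/t)=t^{D_j-D}r_j(t)$; the identity collapses to a polynomial relation among the $r_j$ (each of which has constant term $1$) whose constant term and coefficient of $t$ can be read off. If $m\ge 2$, the relation is $\sum_j(t^{D_j-2}-1)r_j(t)=0$, whose constant term is minus the number of $j$ with $D_j\ge 3$; hence every $D_j=2$. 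If $m=1$, the relation has constant term equal to the positive number of $j$ with $D_j=1$, so this case is impossible. If $m=0$, the relation reads $(1-a_0)(1-t)q(t)=\sum_{j:\,D_j\ge 2}(t^{D_j}-t)r_j(t)$, where $a_0$ is the number of orbits with $S_{x_j}=1$; the constant term forces $a_0=1$, and then the coefficient of $t$ forces that no orbit has $D_j\ge 2$, so the remaining orbits all have $D_j=1$.

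It remains to decode the numerics. Because the smallest positive $D_j$ equals $p-1$, the possibility ``$D_j=2$ for all $j$'' requires $p\in\{2,3\}$; for $p=3$ it forces $|S_{x_j}|=3$ with $S_{x_j}\cap\Phi(C)=1$, which is (c), and for $p=2$ it forces each $S_{x_j}$ to be either elementary abelian of order $4$ meeting $\Phi(C)$ trivially or of order $2$, inside $\Phi(C)$, and meeting $\kappa_3(C)=[C,\Phi(C)]\mho^1(\Phi(C))$ trivially, which is (a). Similarly ``$D_j=1$'' forces $p=2$, $|S_{x_j}|=2$, $S_{x_j}\cap\Phi(C)=1$; together with $a_0=1$ this gives (b), where if $e=1$ the condition on the other orbits is vacuous and $p=2$ follows instead from Lemma~3.7 since $Q$ is non-central. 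The main obstacle is precisely this decoding together with the bookkeeping behind it: one must confirm that the palindromy identity, after $t\mapsto 1/t$, really does collapse so that exactly its two lowest coefficients pin down the multiset $\{D_j\}$ — keeping track of the different degrees of $q$ and the $r_j$ and of which terms can contribute in low degree — and then translate each admissible value of $D_j$ into the corresponding position of $S_x$ in the Jennings filtration of $C$.
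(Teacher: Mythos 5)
Your argument is correct and follows essentially the same route as the paper's: write $p(t)=q(t)+t\sum_j r_j(t)$ using (*) and (**), apply Jennings' theorem to $q$ and each $r_j$, and pin down the degree gaps $D_j=\deg q-\deg r_j$ by comparing the lowest coefficients in the palindromy identity (the paper tracks the counts $e_j=\#\{i:D_i=j\}$ and compares coefficients directly rather than rewriting via $p(t)=t^{\deg p}p(1/t)$, but the bookkeeping is equivalent; you also spell out the ``if'' direction, which the paper only asserts). One slip to fix: the overview sentence in your ``if'' direction claims that in case (b) every non-$x^*$ orbit has $D_j=2$ with $\deg(tr_j)=D-1$, whereas your own computation $f_j=1+t$ gives $D_j=1$ and $\deg(tr_j)=D$; the palindromy check for (b) still closes with the corrected degrees, since then $p(t)=(1+t)q(t)+t\sum_{j\neq x^*}r_j(t)=t^{D+1}p(1/t)$.
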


\begin{proof}

Assume that $kP^Q$ has a symmetric Loewy series, and let $\Omega_i = \coprod_1^e \Omega_i$ be an orbit decomposition of $\Omega$ under the left action of $C$. Also let $S_i$ denote the stabilizer in $C$ of some $\kappa_{x_i} \in \Omega_i$. Since (*) and (**) hold we get $J^d(kP^Q) = J'^d \oplus \bigoplus J'^{d-1}k\Omega_i$ where $J' = J(kC)$. So if $q(t)$ and $r_i(t)$ are the Poincar\'{e} polynomials for the radical series of $kP^Q$ and $k\Omega_i$, respectively, then the radical series for $kP^Q$ has the Poincar\'{e} polynomial $p(t) = q(t)+t\sum_{i=1}^e r_i(t)$. By Jennings' Theorem, $q$ and $r_i$ are symmetric in $t$, and of course $\deg r_i \leq \deg q$. Observe that the constant and leading coefficients of $q$ and $r_i$ equal 1. For $1 \leq j \leq \deg q$ let $e_j$ denote the number of indices $i$ for which $\deg r_i = \deg q - j$. There are two cases to consider: $e_0 = 0$ and $e_0 > 0$.\\

Assume that $e_0 = 0$, so that $p(t) = 1 + (q'(0)+e)t + \cdots + (1+e_1)t^{\deg q}$. Since $p$ is symmetric in $t$ we conclude that $e_1 = 0$, and hence $t^{\deg q-1}$ appears in $p(t)$ with coefficient $q'(0)+e_2$. This implies that $e_2 = e$, and hence $\deg r_i = \deg q - 2$ for all $i$. From (\ref{r}) see that $p = 2,3$. If $p = 3$ then

$$\dim \kappa_n/(\kappa_n \cap S_i)\kappa_{n+1} = \begin{cases}
\dim \kappa_1/S_i\kappa_2-1 &\text{if $n = 1$}\\
\dim \kappa_n/\kappa_{n+1} &\text{if $n > 1$}
\end{cases}$$

In particular $\kappa_n \cap S_i \leq \kappa_{n+1}$ for $n \geq 2$, and so $\kappa_2 \cap S_i \leq \kappa_n \cap S_i = 1$ for $n$ sufficiently large. Moreover $|S_i| = |S_i\kappa_2 / \kappa_2| = 3$. Since $\kappa_2 = \Phi(C)$, this establishes the necessity of the condition given for $p = 3$. On the other hand, if $p = 2$ then

$$\dim \kappa_n/(\kappa_n \cap S_i)\kappa_{n+1} = \begin{cases}
\dim \kappa_1/S_i\kappa_2-2 &\text{if $n = 1$}\\
\dim \kappa_n/\kappa_{n+1} &\text{if $n > 1$}
\end{cases}$$

or else

$$\dim \kappa_n/(\kappa_n \cap S_i)\kappa_{n+1} = \begin{cases}
\dim \kappa_n/\kappa_{n+1} &\text{if $n \not= 2$}\\
\dim \kappa_2/\kappa_3-1 &\text{if $n = 2$}
\end{cases}$$

In the first case we get $S_i \cap \Phi(C) = 1$ and $S_i$ elementary abelian of order 4; in the second case we get $S_i \leq \Phi(C)$, $S_i$ intersects $\kappa_3(C) = [C,\Phi(C)]\mho^1(\Phi(C))$ trivially, and $|S_i| = 2$.\\

Assume now that $e_0 > 1$. Since the leading coefficient of $p$ equals $e_0$, we obtain $e_0 = 1$. That is, there is a unique $i^*$ for which $\deg r_{i^*} = \deg q$. Hence $p(t) = 1 + (q'(0)+e)t + \cdots + (1+q'(0)+e_1)t^{\deg q} + t^{\deg q+1}$ and so $e_1 = e-1$. By Lemma 3.6, either $p = 2$ or $e_1 > 0$. If $e_1 > 0$ then $\deg r_i = \deg q -1$ for some $i$, and so we conclude from (\ref{r}) that $p = 2$. Therefore $p = 2$, and arguing as we did above, we see that $S_{i^*} = 1$, and $S_i$ intersects $\Phi(C)$ trivially for $i \not= i^*$ with $|S_i| = 2$.\\

We have established the necessity of the conditions under the assumption of symmetry, and in effect, we have proven that they are sufficient to guarantee symmetry, as well.

\end{proof}

\section{Loewy Length; General Considerations}

Recall that we denote by $\ell\ell(kP^Q)$ the Loewy length of $kP^Q$, which is the smallest $d \geq 1$ for which $J^d(kP^Q) = 0$. For instance, if $P$ is an extra special $p$-group, then in the statement of Theorem 4.1, $\ell\ell(kP^Q) = \deg p(t) + 1$. In particular, for extra special $p$-groups, $\ell\ell(kP^Q)$ is determined by $C_P(Q)$. This behavior will also be apparent in the next section for another distinguished class of $p$-groups. As a prelude to some of those computations, and to give an idea of how this behavior might hold for more general groups, we offer the following proposition.

\begin{thm}If $P$ has abelian subgroup $Q$ and $C_P(Q) \unlhd P$, then $\ell\ell(kP^Q) = \ell\ell(kC_P(Q))$.\end{thm}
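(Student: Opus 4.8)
The plan is to exploit the decomposition $kP^Q=kC\ltimes I$ (with $C=C_P(Q)$, $J'=J(kC)$) by showing it is as rigid as possible under these hypotheses, so that $J^d(kP^Q)=J'^d\oplus J'^{d-1}I$ for all $d$, and then to bound the Loewy length of $I$ as a left $kC$-module. If $Q\le ZP$ there is nothing to prove, since then $kP^Q=kP=kC$; so I assume $Q$ is non-central, whence $I\neq0$.

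First I would record the elementary consequences of the hypotheses. Since $Q$ is abelian, $Q\le C_P(Q)=C$, so $Q\le Z(C)$; since $C\unlhd P$, also $[P,Q]\le C$, and hence $[P,Q,Q]\le[C,Q]=1$. Thus Lemma 3.3, Theorem 3.4 and the relations (\ref{PQQ}) all apply. Normality of $C$ also gives $[q,x]\in C$ for all $q\in Q,\ x\in P$, so $S_x=S_x^{\dagger}=[Q,x]$; by (\ref{PQQ}) the map $q\mapsto[q,x]$ is a surjective homomorphism onto $S_x$ with kernel $C_Q(x)$, and for $x\in P-C$ we have $C_Q(x)\neq Q$, so every stabilizer $S_x$ ($\kappa_x\in\Omega$), and likewise every right stabilizer (also $\cong Q/C_Q(x)$), is nontrivial.

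Next I would verify conditions (*) and (**). Condition (*), $J'I=IJ'$, is immediate from Corollary 3.2 since $C\unlhd P$. Condition (**), $I^2\subseteq J'I$, I would prove by contradiction: if it failed, Theorem 3.4 would furnish $x,y\in P-C$ with $|C|=|Q:C_Q(xy)|=|[Q,xy]|$; as $[Q,xy]\le C$ this forces $[Q,xy]=C$, so $C=[Q,xy]\le[Q,P]\le C$, i.e.\ $C=[Q,P]$, which is abelian by (\ref{PQQ}). Then $[Q,P]\le[C,P]\le C$ gives $[C,P]=C$; but the image of $[C,P]$ in $C/\Phi(C)$ equals $[C/\Phi(C),P]$, a proper subspace since $P$ acts on the nonzero $\mathbb F_p$-vector space $C/\Phi(C)$ through a $p$-group — a contradiction. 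This is the step I expect to be hardest: it is where ``$Q$ abelian'' and ``$C\unlhd P$'' must be combined substantively, and (**) does not hold for general $Q$.

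Finally, with (*) and (**) in hand, the Remark after Corollary 3.5 gives $J^d(kP^Q)=J'^d\oplus J'^{d-1}I$ for all $d\ge1$; hence $J^d(kP^Q)=0$ iff $J'^d=0$ and $J'^{d-1}I=0$, so $\ell\ell(kP^Q)=\max\!\bigl(\ell\ell(kC),\,1+\ell\ell({}_{kC}I)\bigr)$. Writing $I=\bigoplus_j k\Omega_j$ as a sum of transitive left $kC$-permutation modules, $k\Omega_j\cong k[C/S_{x_j}]$ with $S_{x_j}\neq1$, I would compare the Poincar\'e polynomial (\ref{r}) for the radical layers of $k[C/S]$ with (\ref{q}) for $kC$ to conclude $\ell\ell(k[C/S])<\ell\ell(kC)$ whenever $1\neq S\le C$: for any $n$ with $\kappa_n(C)\cap S\not\le\kappa_{n+1}(C)$ — such $n$ exists because $\bigcap_n\kappa_n(C)=1$ — the exponent $\dim\kappa_n/(\kappa_n\cap S)\kappa_{n+1}$ drops below $\dim\kappa_n/\kappa_{n+1}$. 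Thus $\ell\ell({}_{kC}I)=\max_j\ell\ell(k\Omega_j)\le\ell\ell(kC)-1$, so $1+\ell\ell({}_{kC}I)\le\ell\ell(kC)$, and therefore $\ell\ell(kP^Q)=\ell\ell(kC)=\ell\ell(kC_P(Q))$.
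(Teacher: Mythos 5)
Your proof is correct and follows essentially the same route as the paper: establish $[P,Q,Q]=1$, verify (*) via Corollary 3.2 and (**) via Theorem 3.4, deduce $J^d(kP^Q)=J'^d\oplus J'^{d-1}I$, and finish by noting that the nontrivial stabilizers $S_x$ force $\ell\ell(k\Omega_i)<\ell\ell(kC)$. The two small variations — deriving $[P,Q,Q]\le[C,Q]=1$ directly rather than via Hall's Three Subgroups Lemma, and ruling out (**)-failure by showing $[C,P]=C$ is impossible rather than by showing $C_Q(xy)\ge Q\cap ZP>1$ — are both valid and roughly equivalent in effort to the paper's choices.
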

\begin{proof}We may assume $Q \not\leq ZP$. Since $C \lhd P$ we have $[C,P] \leq C$ and hence $[C,P,Q] = [Q,C,P] = 1$. By Hall's Three Subgroups Lemma we obtain $[P,Q,C] = 1$ and in particular $[P,Q,Q] = 1$. Writing $kP^Q = kC \ltimes I$ and $J' = J(kC)$, we claim that $I^2 \subseteq J'I$. If this is not true, then by Theorem 3.4 there is $p \in P$ for which $|C| = |Q:C_Q(p)|$. Since $Q$ is abelian, $|Q| \leq |C|$ and thus $Q = C$ and $C_Q(p) = 1$. But $1 < C_Q(p)$ since $1 < Q \lhd P$. This contradiction establishes $I^2 \subseteq J'I$. From this and Corollary 3.2, we know $J^d(kP^Q) = J'^d \oplus \bigoplus_1^e J'k\Omega_i$ where $\Omega = \coprod_1^e \Omega_i$ is an orbit decomposition of $\Omega$. Therefore, $\ell\ell(kP^Q) = \max\{\ell\ell(kC),\ell\ell(k\Omega_i)+1\}$. Since $1 < S_x^{\dagger} \leq C$ for all $x \in P-C$, we see that $1 < S_x$, and hence $\ell\ell(k\Omega_i) < \ell\ell(kC)$ for all $i$ by (\ref{q}) and (\ref{r}). So we obtain $\ell\ell(kP^Q) = \ell\ell(kC_P(Q))$.
\end{proof}

\begin{ex}If $P$ is a metacyclic $p$-group with cyclic subgroup $R \unlhd P$ such that $P/R$ cyclic, then $\ell\ell(kP^Q) = \ell\ell(kC_P(Q))$ whenever $Q \leq R$ by Theorem 6.1. In particular, $C_P(Q)$ is metacyclic since $R \unlhd C_P(Q)$ and $C_P(Q)/R$ is cyclic, and hence $\ell\ell(kC_P(Q))$ may be computed as in ~\cite{Koshitani} and ~\cite{Motose2}.\end{ex}

In the proof to Proposition 6.1 we utilized the following useful lemma.

\begin{lemma}Suppose $P$ is a $p$-group with subgroup $R \not= 1$. Then the $kP$-module $k[P/R]$ satisfies $\ell\ell(k[P/R]) < \ell\ell(kP)$.\end{lemma}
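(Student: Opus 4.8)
The plan is to use the combinatorial description of the radical layers of $k[P/R]$ provided by Alperin's generalization of Jennings' Theorem, as recalled in the Preliminaries, together with the analogous description for $kP$ itself. Recall that the radical layers of $kP$ are encoded by the Poincar\'e polynomial $q(t)$ in (\ref{q}), built from the dimensions $\dim \kappa_n/\kappa_{n+1}$, while those of $k[P/R]$ are encoded by $r(t)$ in (\ref{r}), built from the dimensions $\dim \kappa_n/(\kappa_n \cap R)\kappa_{n+1}$. Since $\ell\ell(kP) = \deg q(t) + 1$ and $\ell\ell(k[P/R]) = \deg r(t) + 1$, it suffices to show $\deg r(t) < \deg q(t)$.

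First I would note that for each $n$ we have $\dim \kappa_n/(\kappa_n \cap R)\kappa_{n+1} \leq \dim \kappa_n/\kappa_{n+1}$, since $(\kappa_n \cap R)\kappa_{n+1} \supseteq \kappa_{n+1}$; consequently each exponent in (\ref{r}) is at most the corresponding exponent in (\ref{q}), and so term by term $\deg r(t) \leq \deg q(t)$. To get the strict inequality, I would argue that there is at least one index $n$ at which the exponent strictly drops, i.e. $(\kappa_n \cap R)\kappa_{n+1} \supsetneq \kappa_{n+1}$, equivalently $\kappa_n \cap R \not\leq \kappa_{n+1}$. Indeed, since $R \neq 1$ and $R \leq P = \kappa_1$ while $\bigcap_n \kappa_n = 1$ (the Jennings series terminates at the trivial subgroup because $P$ is a finite $p$-group), there is a largest $n$ with $R \leq \kappa_n$; for that $n$ we have $R \leq \kappa_n$ but $R \not\leq \kappa_{n+1}$, so $\kappa_n \cap R = R \not\leq \kappa_{n+1}$, giving a strict drop in the exponent at level $n$.

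The remaining point is to pass from "some exponent strictly drops" to "$\deg r(t) < \deg q(t)$." Here I would use that each factor $(1 + t^n + \cdots + t^{(p-1)n})^{a_n}$ contributes exactly $(p-1)n\,a_n$ to the degree of the product, so $\deg q(t) = \sum_n (p-1)n \dim \kappa_n/\kappa_{n+1}$ and $\deg r(t) = \sum_n (p-1)n \dim \kappa_n/(\kappa_n \cap R)\kappa_{n+1}$; since every summand of the latter is $\leq$ the corresponding summand of the former and at least one (the level-$n$ term just identified, with coefficient $(p-1)n \geq 1$) is strictly smaller, we conclude $\deg r(t) < \deg q(t)$, hence $\ell\ell(k[P/R]) < \ell\ell(kP)$.

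I do not expect any serious obstacle: the main thing to be careful about is the bookkeeping that the degree of the Poincar\'e polynomial genuinely computes the Loewy length (which follows from Alperin's theorem, as the top nonzero radical layer is precisely the one in the top degree), and the elementary observation that the Jennings subgroups $\kappa_n$ eventually become trivial so that a nontrivial $R$ must ``leave'' some layer. Both are routine, so the proof should be short.
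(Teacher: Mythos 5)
Your proof is correct and follows exactly the approach the paper implicitly intends: the paper states the lemma without a written proof, but the surrounding text (the proof of Theorem 6.1 and the remark introducing the lemma) invokes it by reference to the Poincar\'e polynomials (\ref{q}) and (\ref{r}), which is precisely your argument. The key observations—that each exponent $\dim \kappa_n/(\kappa_n \cap R)\kappa_{n+1}$ is bounded above by $\dim \kappa_n/\kappa_{n+1}$, that a nontrivial $R$ must ``leave'' the Jennings series at some largest $n$ forcing a strict drop there, and that the Loewy length is $\deg(\cdot)+1$ since the top-weight product is nonzero and Nakayama kills higher powers—are all in order.
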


One might ask what can be said in the general case, where $\ell\ell(kP^Q)$ need not equal $\ell\ell(kC_P(Q))$. As with any finite dimensional algebra, $\ell\ell(kP^Q) \leq \dim kP^Q = |C|+|\Omega|$. However, this estimate can be improved, as in the following.

\begin{prop}If $P$ has subgroup $Q$ then either $\ell\ell(kP^Q) = \ell\ell(kC_P(Q))$ or $\ell\ell(kC_P(Q)) < \ell\ell(kP^Q) \leq |\Omega|+1$.
\end{prop}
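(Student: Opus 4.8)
The statement dichotomizes according to whether or not $\ell\ell(kP^Q)=\ell\ell(kC_P(Q))$, so the only thing to prove is the two inequalities in the second alternative, namely that when $\ell\ell(kP^Q)\neq\ell\ell(kC_P(Q))$ we have $\ell\ell(kC_P(Q))<\ell\ell(kP^Q)\le|\Omega|+1$. The plan is to show (A) $\ell\ell(kP^Q)\ge\ell\ell(kC_P(Q))$ always, so that inequality is automatic once equality fails, and (B) $\ell\ell(kP^Q)\le|\Omega|+1$ always. Write $C=C_P(Q)$, $J=J(kP^Q)$, $J'=J(kC)$, and recall from Proposition 2.1 that $J=J'\oplus I$ with $I$ the nilpotent ideal spanned by $\Omega$.

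For (A): $kC$ is a quotient of $kP^Q$ via the projection $kP^Q=kC\ltimes I\twoheadrightarrow kC$ killing $I$, and this is a surjective algebra homomorphism, so it carries $J^d(kP^Q)$ onto $J^d(kC)$; hence $J^d(kP^Q)=0$ forces $J^d(kC)=0$, giving $\ell\ell(kC)\le\ell\ell(kP^Q)$. Since by hypothesis this is not an equality in the second case, it is a strict inequality, which is the left-hand bound.

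For (B): the key observation is that $I$ is itself nilpotent, and in fact $I^{|\Omega|+1}=0$ — indeed more strongly, any product of $|\Omega|+1$ elements of $I$ can be analyzed, but the cleanest route is that $I\subseteq J$ and $I$ is nilpotent, so it suffices to bound the nilpotency index of $I$ and then combine. Actually the slicker approach: set $\Lambda=kP^Q$ and note $\Lambda/I\cong kC$, so $J^d(\Lambda)\subseteq I$ once $d\ge\ell\ell(kC)$ (because the image of $J^d(\Lambda)$ in $kC$ is $J^d(kC)=0$). Then for any further multiplication we are working inside the ideal $I$: if $d_0=\ell\ell(kC)$, then $J^{d_0}(\Lambda)\subseteq I$, hence $J^{d_0+m}(\Lambda)\subseteq I^{m}$ for $m\ge 1$ wait — one must be careful, since $J^{d_0+m}(\Lambda)=J^m(\Lambda)\cdot J^{d_0}(\Lambda)$ need not land in $I^m$; rather $J^{d_0}(\Lambda)\subseteq I$ gives $J^{d_0+m}(\Lambda)\subseteq J^m(\Lambda)\,I$. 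So the right estimate is to show $J(\Lambda)^{|\Omega|}\,I=0$ equivalently $I$ has nilpotency index at most $|\Omega|$ inside $\Lambda$ in the strong sense that $J(\Lambda)I$ descends a filtration of the $kC$-module $I$; since $\dim_k I=|\Omega|$ and $I\supseteq J(\Lambda)I\supseteq J(\Lambda)^2I\supseteq\cdots$ is a strictly decreasing chain of submodules as long as it is nonzero, it must reach $0$ after at most $|\Omega|$ steps, i.e. $J(\Lambda)^{|\Omega|}I=0$. Combining with $J^{|\Omega|+1}(\Lambda)=J^{|\Omega|}(\Lambda)\,J(\Lambda)=J^{|\Omega|}(\Lambda)(J'\oplus I)$: the $J'$-part lies in $J'^{?}$... the honest finish is $J^{|\Omega|+1}(\Lambda)\subseteq J^{|\Omega|}(\Lambda)J'+J^{|\Omega|}(\Lambda)I$, the second summand is $0$, and one must also absorb the first; but if $\ell\ell(kC)\le|\Omega|+1$ this is handled, and if $\ell\ell(kC)>|\Omega|+1$ then we are in the \emph{first} alternative — this needs the input that $\ell\ell(kC)\le\ell\ell(kP^Q)$ combined with a separate argument, or more simply one observes $J^{|\Omega|+1}(\Lambda)\subseteq I$ already (since $|\Omega|+1\ge 2\ge$ the point where radical powers of $\Lambda$ enter $I$ once $\ell\ell(kC)$ is small) and then applies the strictly-descending-chain argument to $I$ directly: $I\supseteq JI\supseteq J^2I\supseteq\cdots$ terminates in $\le|\Omega|$ steps, so $J^{|\Omega|}I=0$; and since $I$ is an ideal with $J^{|\Omega|+1}(\Lambda)=J^{|\Omega|}(\Lambda)J(\Lambda)$, a careful bookkeeping (treating the $kC$ and $I$ contributions separately, exactly as in the displayed formula $J^d=J'^d\oplus\cdots$ computations used earlier) yields $J^{|\Omega|+1}(\Lambda)=0$ in the case at hand, where the hypothesis $\ell\ell(kP^Q)\neq\ell\ell(kC)$ forces $\ell\ell(kC)<\ell\ell(kP^Q)$ and in particular $\ell\ell(kC)\le|\Omega|+1$ is \emph{not} automatic — here is where the real work lies.

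\textbf{Main obstacle.} The delicate point is exactly this last combination: bounding $\ell\ell(kP^Q)$ by $|\Omega|+1$ requires controlling both how deep the radical must go to enter $I$ (governed by $\ell\ell(kC)$) and how long it then takes to kill $I$ (governed by $\dim I=|\Omega|$ via the strictly descending chain $I\supseteq JI\supseteq J^2I\supseteq\cdots$). The clean statement is that $J^{d}(\Lambda)\cap I$ strictly decreases in $d$ until it is zero, and $J^d(\Lambda)\subseteq I$ once $d\ge\ell\ell(kC)$; chaining these gives $\ell\ell(kP^Q)\le\ell\ell(kC)+(\dim I-1)$ roughly, which is \emph{not} obviously $\le|\Omega|+1$ unless $\ell\ell(kC)$ is small. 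I expect the resolution is that in the second alternative one shows directly that $J(\Lambda)I=0$ is false forces $I\cap J(\Lambda)$ to already be all of $I$ and the chain $I\supseteq JI\supseteq\cdots$ has length $\le|\Omega|$, while simultaneously $\ell\ell(kC)\le|\Omega|+1$ must hold (else we would be in the first case by a dimension count on $C$ versus the structure of $\Omega$), so the bound $|\Omega|+1$ emerges as $\max$ rather than sum. Pinning down that "max not sum" phenomenon — i.e. that the radical layers of $kP^Q$ beyond layer $1$ are, as $kC$-modules, subquotients controlled by $I$ alone once we are past the $kC$-part — is the crux, and it should follow from the block-triangular structure $J^d(\Lambda)=J'^d\oplus(\text{something}\subseteq I)$ that generalizes the clean formula $J^d=J'^d\oplus J'^{d-1}I$ valid under (*) and (**).
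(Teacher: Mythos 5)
There is a genuine gap here, and to your credit you flag it explicitly, but your proposal does not close it. The left inequality $\ell\ell(kC_P(Q))\le\ell\ell(kP^Q)$ is fine (both the quotient argument and simply $J'^d\subseteq J^d$ work). The trouble is the right inequality. Two things are missing. First, your descending chain $I\supseteq JI\supseteq J^2I\supseteq\cdots$ is the \emph{one-sided} chain $J^{m}I$, and left Nakayama does make it strictly decreasing until zero; but that chain does \emph{not} give the decomposition $J^d=J'^d\oplus(\text{something}\subseteq I)$ you correctly predict is needed — e.g.\ $J^2=J'^2\oplus(J'I+IJ'+I^2)$ while $JI=J'I+I^2$ misses $IJ'$. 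The correct object is the \emph{two-sided} ideal chain $L_1=I$, $L_{i+1}=JL_i+L_iJ$, which does satisfy $J^i=J'^i\oplus L_i$ (equivalently $L_i=J^i\cap I$). Second, and crucially, strictness of $L_1\supseteq L_2\supseteq\cdots$ is not automatic the way it is for $J^mI$: you need a \emph{double} Nakayama argument — if $L_{i+1}=L_i$, i.e.\ $L_i=JL_i+L_iJ$, then applying Nakayama on the left gives $L_i=L_iJ$, and then on the right gives $L_i=0$. This is the "max not sum" mechanism you were groping for: once $L_i$ stabilizes, it has already hit zero, so $J^i=J'^i$ from that point on and $\ell\ell(kP^Q)=\ell\ell(kC)$; otherwise the chain drops in dimension at every step from $\dim L_1=|\Omega|$, forcing $L_{|\Omega|+1}=0$, and a dimension count on $kP^Q=kC\oplus I$ together with $\dim J^{\ell\ell(kC)}/J^{\ell\ell(kP^Q)}\ge\ell\ell(kP^Q)-\ell\ell(kC)$ gives $\ell\ell(kP^Q)\le|\Omega|+1$. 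Without the two-sided $L_i$ and the left-then-right Nakayama you cannot rule out the additive bound $\ell\ell(kC)+|\Omega|-1$, which is exactly the dead end you ran into.
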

\begin{proof}
Write $J = J(kP^Q)$, $J' = J(kC)$, and inductively define ideals $L_i$ of $kP^Q$ contained in $I$ by $L_1 = I$ and $L_{i+1} = JL_i + L_iJ$. Since $J^i = (J'+I)^i$ we obtain $J^i = J'^i \oplus L_i$ for all $i \geq 1$. If $L_i = L_{i+1}$ for some $i \geq 1$, then Nakayama's Lemma applied on the left yields $L_i = L_iJ$, and Nakayama's lemma applied on the right yields $L_i = 0$. So if $L_i = L_{i+1}$ for some $1 \leq i \leq \ell\ell(kC)-1$, then $J^{\ell\ell(kC)} = 0$ and hence $\ell\ell(kP^Q) = \ell\ell(kC)$. Assuming $\ell\ell(kP^Q) \not= \ell\ell(kC)$ we obtain $L_{i+1} < L_i$ for all $1 \leq i \leq \ell\ell(kC)$, and in particular $\dim kP^Q/J^{\ell\ell(kC)} \geq |C| + \ell\ell(kC)-1$. Since $\dim kP^Q = |C|+|\Omega|$ and $\dim J^{\ell\ell(kC)}/J^{\ell\ell(kP^Q)} \geq \ell\ell(kP^Q)-\ell\ell(kC)$, we obtain $\ell\ell(kP^Q) \leq |\Omega|+1$, as required.
\end{proof}

The following example shows that $C_P(Q)$ does not in general determine $\ell\ell(kP^Q)$.

\begin{ex}Let $Q = D_{16}$ be the dihedral group with generators $a$ and $b$ of order 8 and 2, respectively. Write $\psi$ for the automorphism of $Q$ of order 2 given by $\psi(a) = a^3$ and $\psi(b) = b$, and let $P = Q \rtimes \lr{\psi}$. Since $\psi$ is an outer automorphism, $C_P(Q) = ZQ = \lr{a^4}$ and so also $C_P(P) = ZP = \lr{a^4}$ since $|a^4| = 2$. In general, if $N \leq N_P(Q)$, then $kN^Q$ is a subalgebra of $kP^Q$ with $J(kN^Q) = kN^Q \cap J(kP^Q)$, and hence $\ell\ell(kN^Q) \leq \ell\ell(kP^Q)$. This applies in particular with $N = Q$ where $\ell\ell(kQ^Q) = 4$ by Theorem 7.3. On the other hand, working with the canonical basis for $ZkP = kP^P$ one can compute $J^2(ZkP) = 0$, and hence $\ell\ell(ZkP) = 2$.
\end{ex}

\section{$p$-groups with Cyclic Subgroup of index $p$}

Let $P$ be a $p$-group of order $p^n$ that contains a cyclic subgroup of index $p$. Our aim is to compute $\ell\ell(kP^Q)$ for all $Q \leq P$. For convenience define $l = p^{n-2}$. If $P$ is cyclic, then $\ell\ell(kP^Q) = \ell\ell(kP) = p^n$ by (\ref{q}). If $P$ is abelian and non-cyclic, then $\ell\ell(kP^Q) = \ell\ell(kP) = p^{n-1}+p-1$ by ~\cite{Koshitani}. Assume then that $P$ is nonabelian, so that $n \geq 3$, and using the notation from ~\cite{Aschbacher} we get $P = \text{Mod}_{p^n}, D_{2^n}, SD_{2^n}$ for $n \geq 4$, or $Q_{2^n}$. In the first three cases, $P$ has a presentation of the form $\lr{a,b | a^{p^{n-1}} = b^p = 1,{^ba} = a^i}$ where $i = l+1,-1$ or $l-1$ respectively, and in the fourth case $P$ has the presentation $\lr{a,b | a^{p^{n-1}} = 1, b^p = a^l,{^ba} = a^{-1}}$.\\

Let's begin with the simplest case: $P = \text{Mod}_{p^n}$. With the presentation given above we see that $ZP = \lr{a^p}$, $[P,P] = \lr{a^l}$ has order $p$, $[P,P] \leq ZP$, and hence $P$ has nilpotency class 2. As in Theorem 4.1, since $[Q,P]$ is cyclic for $Q \leq P$, we obtain $J^d(kP^Q) = J'^d \oplus \bigoplus_{i=1}^e J'^{d-1}k\Omega_i$ where $J' = J(kC)$ and $\Omega = \coprod_{i=1}^e \Omega_i$ is an orbit decomposition. Therefore $\ell\ell(kP^Q) = \max\{ \ell\ell(kC_P(Q)), \ell\ell(k\Omega_i)+1 \}$. Since $1 < C_P(Q) \cap [Q,x]$ for all $\kappa_x \in \Omega$, we know $\ell\ell(k\Omega_i) < \ell\ell(kC_P(Q))$ by Lemma 6.3. Therefore $\ell\ell(kP^Q) = \ell\ell(kC_P(Q))$, and so it suffices to analyze $C_P(Q)$. Notice that either $Q \leq ZP$, $Q = \lr{a}$, $c \in Q$ for some $c \in P-\lr{a}$ and $Q \cap \lr{a} \leq ZP$, or $Q = P$. Then $C_P(Q)$ is given respectively as $C_P(Q) = P,\lr{a},\lr{a^p,c}$, or $\lr{a^p}$. Accordingly, since $C_P(Q)$ is abelian with a cyclic subgroup of index $p$, we obtain the following.

\begin{thm}Let $P = \text{Mod}_{p^n}$ with elements $a$ and $b$ of orders $p^{n-1}$ and $p$, respectively, where ${^ba} = a^{l+1}$ and $l = p^{n-2}$. If $Q \leq P$ then

$$\ell\ell(kP^Q) = \begin{cases}
p^{n-1}+p-1 &\text{if $C_P(Q) = P$}\\
p^{n-1} &\text{if $C_P(Q) = \lr{a}$}\\
p^{n-2}+p-1 &\text{if $C_P(Q) = \lr{a^p,c}$ for some $c \in P-\lr{a}$}\\
p^{n-2} &\text{if $C_P(Q) = \lr{a^p}$}
\end{cases}$$

\end{thm}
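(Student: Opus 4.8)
The reduction is essentially already carried out in the paragraph preceding the statement. Since $P=\mathrm{Mod}_{p^n}$ has nilpotency class $2$ and $[P,Q]$ is cyclic for every $Q\le P$, conditions (*) and (**) hold, $J^d(kP^Q)=J'^d\oplus\bigoplus_i J'^{d-1}k\Omega_i$, and Lemma 6.3 gives $\ell\ell(k\Omega_i)<\ell\ell(kC_P(Q))$, so $\ell\ell(kP^Q)=\ell\ell(kC_P(Q))$; moreover $C_P(Q)$ is $P$, $\lr{a}$, some $\lr{a^p,c}$ with $c\in P-\lr{a}$, or $ZP=\lr{a^p}$, according to whether $Q\le ZP$, $Q=\lr{a}$, $Q\not\le\lr{a}$ with $Q\cap\lr{a}\le ZP$, or $Q=P$. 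So the theorem comes down to the four group-algebra computations $\ell\ell(kG)$ for these $G$, and this is how I would organise the proof.

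Two of them are immediate: $\lr{a}\cong C_{p^{n-1}}$ and $\lr{a^p}\cong C_{p^{n-2}}$ are cyclic, so $kG\cong k[t]/(t^{|G|})$ and $\ell\ell(kG)=|G|$, giving $p^{n-1}$ and $p^{n-2}$. For $G=\lr{a^p,c}$: since $a^p\in ZP$ this is abelian, of order $p^{n-1}$, with cyclic subgroup $\lr{a^p}\cong C_{p^{n-2}}$ of index $p$; granting that it is $C_{p^{n-2}}\times C_p$, equation $(\ref{q})$ gives Poincar\'{e} polynomial $(1+t+\cdots+t^{p-1})(1+t+\cdots+t^{p^{n-2}-1})$ and hence $\ell\ell(kG)=p^{n-2}+p-1$. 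The substantive case is $G=P=\mathrm{Mod}_{p^n}$, where I would compute the Jennings subgroups $\kappa_i$ outright: $\kappa_2=\Phi(P)=\lr{a^p}=ZP$, and because $\kappa_i\le ZP$ for $i\ge2$ every commutator entering the recursion for $\kappa_n$ (for $n\ge3$) is trivial, so $\kappa_n$ is built purely from $p$-th powers, and an induction using $\mho^1(\lr{a^{p^j}})=\lr{a^{p^{j+1}}}$ yields $\kappa_i=\lr{a^{p^{\lceil\log_p i\rceil}}}$ for all $i\ge2$. Thus $\dim\kappa_1/\kappa_2=2$, $\dim\kappa_{p^j}/\kappa_{p^j+1}=1$ for $1\le j\le n-2$, and all other layers vanish; plugging this into $(\ref{q})$ and telescoping $\prod_{j=1}^{n-2}(1+t^{p^j}+\cdots+t^{(p-1)p^j})=(1+t+\cdots+t^{p^{n-1}-1})/(1+t+\cdots+t^{p-1})$ gives $q(t)=(1+t+\cdots+t^{p-1})(1+t+\cdots+t^{p^{n-1}-1})$, which has degree $p^{n-1}+p-2$, so $\ell\ell(kP)=p^{n-1}+p-1$.

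The step I expect to be the real obstacle is the third case, and in particular the claim that $\lr{a^p,c}\cong C_{p^{n-2}}\times C_p$. Writing $c=a^sb^t$ with $t\not\equiv0\bmod p$, the standard class-$2$ power identity gives $c^p\in a^{sp}\lr{a^{p^{n-2}}}$; hence $\lr{a^p,c}$ is in fact cyclic of order $p^{n-1}$ precisely when $p\nmid s$ (for instance $\lr{a^p,ab}=\lr{ab}$), and is $C_{p^{n-2}}\times C_p$ only when $p\mid s$. So the proof must show that the subgroups genuinely occurring as $C_P(Q)$ in this case are the noncyclic ones — equivalently that a representative $c\in Q-\lr{a}$ can be chosen with $p\mid s$, i.e. of order $p$ — or else split the case into its cyclic subcase (which gives $\ell\ell=p^{n-1}$, as for $\lr{a}$) and its noncyclic subcase. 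Everything else is routine manipulation with $(\ref{q})$.
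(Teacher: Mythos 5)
Your reduction to $\ell\ell(kP^Q)=\ell\ell(kC_P(Q))$ and the subsequent case analysis follow the paper's own route exactly; the only real difference is that you carry out the group-algebra Loewy lengths explicitly via Jennings' formula (\ref{q}) rather than citing ~\cite{Koshitani}, and your Jennings computation for $\mathrm{Mod}_{p^n}$ (with $\kappa_i=\langle a^{p^{\lceil\log_p i\rceil}}\rangle$ for $i\ge 2$ and the telescoping of the resulting product) is correct.

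More importantly, the obstacle you flag in the third case is a genuine error in the theorem as stated, not merely an unexamined step. The subgroup $\langle a^p,c\rangle$ with $c\in P-\langle a\rangle$ really can be cyclic, and such subgroups do occur as $C_P(Q)$. Concretely, take $Q=\langle ab\rangle$: the class-$2$ power formula gives $(ab)^p = a^p\,[b,a]^{\binom{p}{2}} = a^{\,p+l\binom{p}{2}}$, and one checks that this has order $p^{n-2}$ for every prime $p$ (for odd $p$ because $p\mid\binom{p}{2}$ kills $a^{l\binom{p}{2}}$, for $p=2$ and $n\ge4$ because $2+2^{n-2}$ is exactly divisible by $2$). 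Hence $\langle ab\rangle$ is cyclic of order $p^{n-1}$, $\langle ab\rangle\ne\langle a\rangle$, and a short computation shows $C_P(ab)=\langle ab\rangle=\langle a^p,ab\rangle$. So $Q=\langle ab\rangle$ falls squarely into the paper's third case, yet $\ell\ell(kP^Q)=\ell\ell(k\langle ab\rangle)=p^{n-1}$, not $p^{n-2}+p-1$. (This is also consistent with the existence of an automorphism of $P$ fixing $b$ and sending $a\mapsto ab$, which forces $\ell\ell(kP^{\langle ab\rangle})=\ell\ell(kP^{\langle a\rangle})=p^{n-1}$.) The version of your third-case fix that works is therefore the second alternative you mention: one must split on the isomorphism type of $C_P(Q)$, i.e.\ state the second and third cases as ``$C_P(Q)$ cyclic of index $p$'' (giving $p^{n-1}$) and ``$C_P(Q)$ noncyclic of index $p$, i.e.\ $C_P(Q)\cong C_{p^{n-2}}\times C_p$'' (giving $p^{n-2}+p-1$); the statement in the paper, which identifies these cases with $C_P(Q)=\langle a\rangle$ versus $C_P(Q)=\langle a^p,c\rangle$ for some $c\notin\langle a\rangle$, does not draw the line in the right place.
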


Alternatively, all cases in Theorem 7.1 except $Q = P$ are subsumed under Theorem 6.1. Now assume that $P = D_{2^n}$, $SP_{2^n}$ with $n \geq 4$, or $Q_{2^n}$, with its respective presentation as given above. Also write $z = a^l$ with $l = 2^{n-2}$ so that $ZP = \lr{z}$ and $|z| = 2$. Choose $s = 0,1$ so that ${^cq} = q^{sl-1}$ for all $c \in P-\lr{a}$ and $q \in \lr{a}$. If $Q \leq P$ then either $Q \leq ZP$, $ZP < Q \leq \lr{a}$, $c \in Q$ for some $c \in P-\lr{a}$ and $Q \cap \lr{a} \leq ZP$, or $c \in Q$ for some $c \in P-\lr{a}$ and $ZP < Q \cap \lr{a}$. Since $C_P(c) = \lr{c,z}$ for all $c \in P-\lr{a}$ and $C_P(a^i) = \lr{a}$ for all $a^i \in \lr{a}-ZP$, we see that $C_P(Q)$ is given respectively as $C_P(Q) = P, \lr{a}, \lr{c,z}$, or $\lr{z}$. As Theorem 7.3 will show, $\ell\ell(kP^Q)$ is determined by $C_P(Q)$. Before proving this theorem, it will be helpful to analyze the subalgebra $\Lambda_a$ of $k\lr{a}$ fixed under the conjugation action of any $c \in P-\lr{a}$.\\

\begin{lemma}Assume $\lr{a}$ is a cyclic group of order $2^{n-1} \geq 4$ and $s = 0,1$. Further, if $s = 1$ then assume $2^{n-1} \geq 8$. Define $l = 2^{n-2}$, $z = a^l$, and $\eta_i = a^i+a^{-i}z^{si} \in k\lr{a}$ for $i \in \mathbb{Z}$. Also let $k$ be a field of characteristic 2. If $\psi$ is the $k$-algebra isomorphism induced by $\psi(a) = a^{sl-1}$ and $\Lambda_a$ is the subalgebra fixed under $\psi$, then $\Lambda_a$ is spanned as a vector space by $\Delta = \{1,z,\eta_1,\eta_2,\ldots,\eta_{2l-1}\}$. Moreover, $J^l(\Lambda_a) = 0$ and $J^{l-1}(\Lambda_a)$ contains $\zeta = a+a^3+a^5+\cdots+a^{2l-1}$.\end{lemma}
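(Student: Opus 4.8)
The plan is to carry out everything inside the ambient local algebra $k\lr{a}$. Since $\lr{a}$ is cyclic of order $2^{n-1}=2l$ and $\mathrm{char}\,k=2$, we have $k\lr{a}\cong k[x]/(x^{2l})$ with $x=a-1$, so $J^m(k\lr{a})=(x^m)$ and $J^{2l}(k\lr{a})=0$. The three assertions then become: a spanning statement for $\Lambda_a$; the vanishing $J^l(\Lambda_a)=0$; and the membership $\zeta\in J^{l-1}(\Lambda_a)$, which I will obtain from the identity $\zeta=\eta_1^{\,l-1}$.

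For the spanning statement I would first record the elementary facts about the power map $\bar\psi\colon a\mapsto a^{sl-1}$ on $\lr{a}$: it is a well-defined automorphism because $\gcd(sl-1,2l)=1$ (for $s=1$ this uses that $l-1$ is odd and coprime to $l$), it satisfies $(sl-1)^2\equiv 1\pmod{2l}$, and $sl-1\not\equiv 1\pmod{2l}$ under the stated size hypothesis on $\lr{a}$, so $\bar\psi$ has order exactly $2$. A short congruence check (the only solution of $i(sl-2)\equiv 0\pmod{2l}$ in $\{0,\dots,2l-1\}$ other than $0$ is $i=l$) shows that the only basis elements of $k\lr{a}$ fixed by $\bar\psi$ are $1$ and $a^l=z$, so the remaining $2l-2$ basis elements split into $l-1$ two-element orbits $\{a^i,a^{i(sl-1)}\}$. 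Hence $\Lambda_a=(k\lr{a})^{\psi}$ has a $k$-basis consisting of $1$, $z$, and one orbit sum $a^i+a^{i(sl-1)}$ per orbit; since $a^{i(sl-1)}=a^{-i}(a^{l})^{si}=a^{-i}z^{si}$, each orbit sum equals $\eta_i$, so $\Lambda_a$ is spanned by $\Delta$ (and, discarding the repetitions $\eta_i=\eta_{i(sl-1)}$ and $\eta_l=0$, one sees $\dim_k\Lambda_a=l+1$).

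The technical core is the containment $J(\Lambda_a)\subseteq J^2(k\lr{a})$. Because $\varepsilon$ restricts to a surjection $\Lambda_a\to k$ with nilpotent kernel, $J(\Lambda_a)=\{v\in\Lambda_a:\varepsilon(v)=0\}$, which by the previous paragraph is spanned by $z-1$ and the $\eta_i$. Now $z-1=a^l-1=(a^{l/2}-1)^2\in J^2(k\lr{a})$ since $l$ is even and $\mathrm{char}\,k=2$; and writing $\eta_i=a^i\bigl(1+a^{i(sl-2)}\bigr)$, the exponent $sl-2$ is even in both cases, so $1+a^{i(sl-2)}=\bigl(a^{i(sl-2)/2}-1\bigr)^2\in J^2(k\lr{a})$ and hence $\eta_i\in J^2(k\lr{a})$. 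Therefore $J(\Lambda_a)\subseteq(x^2)$, and so $J^l(\Lambda_a)=J(\Lambda_a)^l\subseteq(x^{2l})=0$.

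Finally, to prove $\zeta=\eta_1^{\,l-1}$ I would compute both sides in $k[x]/(x^{2l})$. The exponent $sl-1$ reduces modulo $2l$ to $e:=2^{n-1}-1$ (if $s=0$) or $e:=2^{n-2}-1$ (if $s=1$), each of which is all ones in binary; so by Lucas' theorem $a^{sl-1}=(1+x)^{e}=1+x+\cdots+x^{e}$, whence $\eta_1=a+a^{sl-1}=x^2+x^3+\cdots+x^{e}=x^2u$ with $u$ a unit satisfying $u\equiv 1+x\pmod{x^2}$. Since $l-1$ is odd and $x^{2l}=0$, this gives $\eta_1^{\,l-1}=x^{2l-2}u^{l-1}=x^{2l-2}(1+x)=x^{2l-2}+x^{2l-1}$. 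On the other hand $\zeta=a\sum_{j=0}^{l-1}(a^2)^j=a\,(a^2-1)^{l-1}=(1+x)x^{2l-2}=x^{2l-2}+x^{2l-1}$, using the polynomial identity $\sum_{j=0}^{l-1}t^j=(t-1)^{l-1}$ (valid in characteristic $2$ when $l$ is a power of $2$) at $t=a^2$, together with $a^2-1=(a-1)^2=x^2$. Comparing, $\zeta=\eta_1^{\,l-1}\in J^{l-1}(\Lambda_a)$ since $\eta_1\in J(\Lambda_a)$. The main obstacle is the realization that one should transport the problem into $k\lr{a}=k[x]/(x^{2l})$ and prove $J(\Lambda_a)\subseteq J^2(k\lr{a})$ via the characteristic-$2$ identity $a^{2m}-1=(a^m-1)^2$; after that the Loewy bound and the identity $\zeta=\eta_1^{\,l-1}$ are brief computations, and handling the cases $s=0$ and $s=1$ uniformly is the only real bookkeeping.
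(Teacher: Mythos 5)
Your argument is correct, and it diverges from the paper's in a worthwhile way on the central step. For the spanning claim both you and the paper observe that $\psi$ permutes the group-element basis and that $\Lambda_a$ is spanned by orbit sums; that part is the same, and your extra checks on $\gcd(sl-1,2l)$ and the fixed points are sound. The genuine difference is in proving $J^l(\Lambda_a)=0$. The paper works internally in $\Lambda_a$: it establishes the recursion $\eta_i=\eta_{i-1}\eta_1+\eta_{i-2}z^s$ to show each $\eta_i$ factors as $\eta_1\theta_i$, observes that any $l$-fold product of the generators $1+z,\eta_1,\dots,\eta_{2l-1}$ collapses to $\eta_1^{l-1}\xi\theta$ with $\theta\in J(\Lambda_a)$, and then kills these using $\eta_1^l=0$ and $(1+z)\eta_1^{l-1}=0$. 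You instead embed into the ambient local ring $k\lr{a}\cong k[x]/(x^{2l})$ and prove the single containment $J(\Lambda_a)\subseteq(x^2)$ via the characteristic-$2$ identity $1+a^{2m}=(1+a^m)^2$, after which $J^l(\Lambda_a)\subseteq(x^{2l})=0$ falls out for free. Your route is shorter and requires no recursion; what it gives up is that the paper's internal computation also produces, as a byproduct, the explicit factorizations $\eta_i=\eta_1\theta_i$ and the annihilation facts that the paper's surrounding arguments (e.g.\ in Theorem 7.3) reuse, whereas your $(x^2)$-containment is a one-shot bound. For the identity $\zeta=\eta_1^{l-1}$ both proofs ultimately invoke Lucas' theorem; your reduction of both sides to $x^{2l-2}+x^{2l-1}$ in $k[x]/(x^{2l})$ is a clean and fully explicit version of what the paper leaves to the citation of \cite{Fine}.
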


\begin{proof}
Since $\psi$ permutes the basis $\{a^i : 0 \leq i < 2^{n-1}\}$ of $k\lr{a}$, it follows that $\Lambda_a$ consists of all $\sum \lambda_i a^i$ with $\lambda_{\cdot}$ constant on the orbits of $\psi$. In particular, $\Delta$ spans $\Lambda_a$ as a vector space. Since $J(\Lambda_a) = \Lambda_a \cap \ker(\varepsilon : k\lr{a} \rightarrow k)$ we see that $J(\Lambda_a)$ is spanned by $\{ 1+z,\eta_1,\ldots,\eta_{2l-1} \}$. So $J^{l-1}(\Lambda_a)$ contains $\eta_1^{l-1} = a+a^3+a^5+\cdots+a^{2l-1}$, where $\eta_1^{l-1}$ is computed by using ~\cite{Fine}. Since $l \geq 2$, $(1+z)^2 = 0$, and $\Lambda_a$ is commutative, we see that the elements of $J^l(\Lambda)$ are $k$-linear combinations of elements of the form $\eta\theta$ where $\theta \in J(\Lambda_a)$ and $\eta$ is a product of some $l-1$ many elements in $\{\eta_i\}_{i=1}^{2l-1}$. We claim that for all $1 \leq i \leq 2l-1$ there is $\theta_i \in \Lambda_a$ for which $\eta_i = \eta_1 \theta_i$. This is obvious for $\eta_1$ and $\eta_2$ since $\eta_2 = \eta_1^2$. So suppose $i > 2$ and the result is true for $i' < i$, and write $\eta_{i-2} = \eta_1\theta_{i-2}$. Then $\eta_i = \eta_{i-1}\eta_1 + \eta_{i-2}z^s = \eta_1(\eta_{i-1}+\theta_{i-2}z^s)$, thus establishing the result by induction. Since $\eta_1^l = z+z = 0$ and $(1+z)\eta_1^{l-1} = 0$, we see that $J(\Lambda_a)$ annihilates $\eta_1^{l-1}$. Therefore $J(\Lambda_a)$ annihilates $J^{l-1}(\Lambda_a)$, so that $J^l(\Lambda_a) = 0$, thus completing the proof.
\end{proof}

\begin{thm}Let $P = D_{2^n}$, $SD_{2^n}$ with $n \geq 4$, or $Q_{2^n}$. If $Q \leq P$ then

$$\ell\ell(kP^Q) = \begin{cases}
2^{n-1}+1 &\text{if $C_P(Q) = P$}\\
2^{n-1} &\text{if $C_P(Q) = \lr{a}$}\\
2^{n-2}+1 &\text{if $C_P(Q) = \lr{z,c}$ for some $c \in P-C_P(Q)$}\\
2^{n-2} &\text{if $C_P(Q) = \lr{z}$}\\
\end{cases}$$

\end{thm}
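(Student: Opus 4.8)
\emph{Overall plan.} I would run through the four possibilities for $C=C_P(Q)$ recorded just before the statement, one at a time. Throughout, $kP^Q$ is a subalgebra of $kP$ with $J(kP^Q)=kP^Q\cap\ker\varepsilon$, so that $\ell\ell(\Lambda)\le\ell\ell(kP^Q)$ for every $\varepsilon$-stable subalgebra $\Lambda\subseteq kP^Q$, and $\ell\ell(kP^{Q'})\le\ell\ell(kP^Q)$ whenever $Q\le Q'$. If $C=P$ then $Q\le ZP$ and $kP^Q=kP$; computing the Jennings series of $D_{2^n}$, $SD_{2^n}$, $Q_{2^n}$ (the same for all three, since the subgroups $\kappa_i(P)$ with $i\ge 2$ all lie in the cyclic subgroup $\lr a$ of index $p$) one finds that~(\ref{q}) has degree $2^{n-1}$, so $\ell\ell(kP^Q)=2^{n-1}+1$. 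If $C=\lr a$ then $Q\le\lr a$ is abelian and $\lr a\unlhd P$, so Theorem 6.1 gives $\ell\ell(kP^Q)=\ell\ell(k\lr a)=2^{n-1}$.

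Next suppose $C=\lr{z,c}$ with $c\in P-\lr a$. Since $z\in ZP$ acts trivially, $kP^Q=kP^{\lr c}$; and since $\lr a\unlhd P$ while $c$ acts on $k\lr a$ by the automorphism $\psi$ of Lemma 7.2 — and on the coset $\lr a c$, after the obvious identification with $\lr a$, again by $\psi$ — we get $kP^{\lr c}=\Lambda_a\oplus\Lambda_a c$ as $k$-spaces. This is a commutative local algebra in which $c$ centralizes $\Lambda_a$ (because $\psi$ restricts to the identity on $\Lambda_a$) and $c^2\in\lr z\subseteq\Lambda_a$; hence $u:=c-1\in J(kP^{\lr c})$ has $u^2=c^2+1\in\{0,\,z+1\}\subseteq J(\Lambda_a)$. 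Using the relations $(z+1)^2=0$ and $(z+1)\eta_1^{l-1}=0$ and the description of $J^d(\Lambda_a)$ from the proof of Lemma 7.2, one checks that $J^{l+1}(kP^{\lr c})=0$ while $u\,\eta_1^{l-1}\in J^l(kP^{\lr c})$ is nonzero (its $\Lambda_a c$-component is $\eta_1^{l-1}=\zeta\ne 0$). Therefore $\ell\ell(kP^Q)=\ell\ell(\Lambda_a)+1=2^{n-2}+1$, the value $\ell\ell(\Lambda_a)=l=2^{n-2}$ coming from Lemma 7.2.

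Finally suppose $C=\lr z$. Then $Q$ contains some $c\in P-\lr a$ and some $a^i$ of order $\ge 4$, and $C_P(\lr{c,a^i})=\lr z$, so $\Lambda_a\subseteq kP^Q\subseteq kP^{\lr{c,a^i}}$ (the left inclusion because $Q$ acts on $k\lr a$ through $\lr{\psi}$). Since $\Lambda_a$ is an $\varepsilon$-stable subalgebra of $kP^Q$, Lemma 7.2 already gives $\ell\ell(kP^Q)\ge\ell\ell(\Lambda_a)=2^{n-2}$. For the reverse inequality, $kP^{\lr{c,a^i}}=kP^{\lr c}\cap kP^{\lr{a^i}}=\Lambda_a\oplus Mc$, where $M=\Lambda_a\cap\mathrm{Ann}_{k\lr a}\bigl([a^i,c]-1\bigr)=\Lambda_a\cap\bigl([a^i,c]-1\bigr)^{r-1}k\lr a$ with $r=|[a^i,c]|\ge 2$; since $r-1\ge 1$ we have $M\subseteq J(\Lambda_a)$. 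Now $kP^{\lr{c,a^i}}$ is commutative with $J=J(\Lambda_a)\oplus Mc$ and $c^2\in\lr z\subseteq\Lambda_a$, so after collecting the factors of $c$ every product of $l$ elements of $J$ lies in $M^k\,J^{l-k}(\Lambda_a)\,(\Lambda_a+\Lambda_a c)$ for some $0\le k\le l$, hence in $J^l(\Lambda_a)\,(\Lambda_a+\Lambda_a c)=0$ because $M^k\subseteq J^k(\Lambda_a)$ and $\ell\ell(\Lambda_a)=l$. Thus $J^l(kP^{\lr{c,a^i}})=0$, so $\ell\ell(kP^Q)\le 2^{n-2}$ and therefore $\ell\ell(kP^Q)=2^{n-2}$.

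\emph{Main obstacle.} The first two cases are routine; the substance of the theorem lies in the last two, and I expect $C=\lr z$ to be the hard part. The difficulty is to pin down exactly the structure — as an ideal over $\Lambda_a$ — of the off-diagonal summand ($\Lambda_a c$ in the third case, $Mc$ in the fourth) and to combine it with the sharp radical-filtration data for $\Lambda_a$ supplied by Lemma 7.2 and its proof (the relations $(z+1)^2=0$, $(z+1)\zeta=0$, the behaviour of the powers $\eta_1^j$, and $\ell\ell(\Lambda_a)=l$). The bookkeeping is noticeably more delicate in the semidihedral and quaternion subcases where $c^2=z\ne 1$, since then $u=c-1$ is not square-zero and one genuinely needs $z+1$ to sit deep enough in $J(\Lambda_a)$; this is precisely where the standing hypothesis $n\ge 4$ (equivalently $l\ge 4$) is used.
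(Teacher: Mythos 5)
Cases $C_P(Q)=P$ and $C_P(Q)=\lr{a}$ match the paper's treatment. Your handling of $C_P(Q)=\lr{z}$ is correct and takes a genuinely different route: the paper sandwiches $J(\Lambda_a)\subseteq J(kP^Q)\subseteq J(\Lambda_a)\Lambda_c$ by writing each off-diagonal basis element $\kappa_{q'c}$ as $q'\tau c$ with $q'\tau\in J(\Lambda_a)$, for the ad hoc element $\tau=\sum_{g\in\mho^1(Q)}g$, and then induces the sandwich up the powers of $J$. You instead pass to the explicit two-generator subgroup $\lr{c,a^i}\leq Q$, identify $kP^{\lr{c,a^i}}=\Lambda_a\oplus Mc$ exactly with $M=\Lambda_a\cap\mathrm{Ann}_{k\lr{a}}([a^i,c]-1)\subseteq J(\Lambda_a)$, and then collect the factors of $c$ to land any $l$-fold product inside $J^l(\Lambda_a)(\Lambda_a+\Lambda_ac)=0$. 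This gives both bounds in one stroke, avoids $\tau$, and is arguably cleaner than what the paper does.

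For $C_P(Q)=\lr{z,c}$, however, there is a real gap in the subcase $c^2=z$ (which occurs for every $c\in P\setminus\lr{a}$ in $Q_{2^n}$, and for those $c$ of order $4$ in $SD_{2^n}$). Your reduction $J^d(kP^{\lr c})=\sum_{0\le j\le 3}J^{d-j}(\Lambda_a)\,\Lambda_au^j$ is right, and the two relations you cite dispose of the $j\le 2$ contributions to $J^{l+1}$; but the $j=3$ contribution is $J^{l-2}(\Lambda_a)(1+z)u$, and to kill it you need $(1+z)J^{l-2}(\Lambda_a)=0$, equivalently $(1+z)\eta_1^{l-2}=0$. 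This is strictly stronger than the cited $(1+z)\eta_1^{l-1}=0$ and is not asserted in Lemma 7.2. It does hold for $l\ge4$ (under $k\lr a\simeq k[x]/(x^{2l})$ with $a=1+x$ one has $1+z=x^l$ and $\eta_1$ of valuation exactly $2$, so $(1+z)\eta_1^{l-2}$ has valuation at least $3l-4\ge 2l$), and you correctly flag that ``$z+1$ must sit deep enough in $J(\Lambda_a)$'' is where $n\ge4$ enters — but the proposal stops short of actually proving the fact it needs. For what it's worth, the paper's own proof of this case quietly skips the same point: it asserts $J^2(\Lambda_c)=0$, which is false precisely when $c$ has order $4$, since then $\Lambda_c=k\lr c$ is four-dimensional and $(1+c)^2=1+z\ne0$. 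So your instinct that this subcase is the delicate one was right; the missing lemma $(1+z)\eta_1^{l-2}=0$ is what closes both your argument and the paper's.
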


\begin{proof}
If $C_P(Q) = P$ then $kP^Q = kP$ and hence $\ell\ell(kP^Q) = 2^{n-1}+1$ by ~\cite{Koshitani}. If $C_P(Q) = \lr{a}$ then $ZP < Q \leq \lr{a}$, and hence $\ell\ell(kP^Q) = \ell\ell(kC_P(Q))$ by Theorem 6.1, and $\ell\ell(kC_P(Q)) = 2^{n-1}$ since $C_P(Q)$ is cyclic.\\

Assume $C_P(Q) = \lr{c,z}$ for some $c \in P-\lr{a}$, so that $\lr{c} \leq Q \leq \lr{c,z}$. Then $kP^Q$ is spanned as a vector space by $\{1,c,z,cz,\eta_1,\ldots,\eta_{2l-1},\eta_1 c,\ldots,\eta_{2l-1}c\}$ in analogy with Lemma 7.2, where $\eta_i = a^i + a^{-i}z^{si}$. In particular, $kP^Q$ is commutative and $J(kP^Q)$ is spanned by $1+c,1+z,1+cz$ and all $\eta_i$ and $\eta_i c$. Define $\Lambda_c$ as the subalgebra of $kP^Q$ generated by 1 and $c$, and define $\Lambda_a$ as in Lemma 7.2. Then by inspection

$$J(kP^Q) = J(\Lambda_a)\Lambda_c + \Lambda_a J(\Lambda_c)$$

For example, $1+cz = (1+z)c + (1+c)$. Since $J^2(\Lambda_c) = 0$ and $\Lambda_a$ commutes element-wise with $\Lambda_c$, we obtain for all $d \geq 2$

$$J^d(kP^Q) = J^d(\Lambda_a)\Lambda_c + J^{d-1}(\Lambda_a) J(\Lambda_c)$$

Lemma 7.2 yields $J^{l+1}(kP^Q) = 0$ and $J^l(kP^Q) \not= 0$ since it contains the nonzero element $\zeta(1+c)$. Therefore $\ell\ell(kP^Q) = 2^{n-2}+1$.\\

Lastly, assume that $C_P(Q) = \lr{z}$, so that $c \in Q$ for some $c \in P-\lr{a}$ and $ZP < Q \cap \lr{a}$. Then $kP^Q$ is spanned by $\{1,z,\eta_1,\ldots,\eta_{2l-1}\} \cup \{ \kappa_{q'c} | q' \in \lr{a} \}$ and $J(kP^Q)$ is spanned by $1+z$ and all $\eta_i$ and $\kappa_{q'c}$. If $Q \cap \lr{a} = \lr{q}$ then define $\tau = \sum_{g \in \mho^1(Q)} g$, and observe that $p \mid \mbox{Supp}(\tau) \not= 0$ since $ZP < Q \cap \lr{a}$. Since ${^q(q'c)} = q'q^2q^{sl}c$ for $q' \in Q$, we conclude that $\kappa_{q'c} = q'q^{sl}\tau c = q'\tau c$. Notice that $cq'\tau c^{-1} = q'\tau$, so that $q'\tau \in \Lambda_a$, and hence $q'\tau \in J(\Lambda_a)$. So by inspection

$$J(\Lambda_a) \subseteq J(kP^Q) \subseteq J(\Lambda_a)\Lambda_c$$

Induction on $d \geq 1$ yields

$$J^d(\Lambda_a) \subseteq J^d(kP^Q) \subseteq J^d(\Lambda_a)\Lambda_c$$

Lemma 7.2 implies that $J^l(kP^Q) = 0$ and $0 \not= \zeta \in J^{l-1}(kP^Q)$. Therefore $\ell\ell(kP^Q) = 2^{n-2}$, thus completing the theorem.
\end{proof}

\begin{remark}Notice that if $P$ is as in Theorem 7.3 with $n \geq 4$, then $2 = \ell\ell(kZP) < \ell\ell(ZkP)$. By Proposition 6.4 we get $2^{n-2} \leq |\Omega|+1$. In fact, $P$ has $2^{n-2}+3$ many conjugacy classes so that $|\Omega| = 2^{n-2}+1$ and hence $\ell\ell(ZkP) = |\Omega|-1$. There are also a handful of cases for $n = 4$ where $Q < P$ and $\ell\ell(kP^Q) = |\Omega|-1$.
\end{remark}

As an important corollary to Theorems 7.1 and 7.3 we obtain the following.

\begin{Cor}Suppose $P$ is a noncyclic $p$-group with a cyclic subgroup of index $p$, and that $Q \leq P$. Then $\ell\ell(kP^Q) = p^{n-1}+p-1$ if and only if $Q \leq ZP$. Moreover, $\ell\ell(kP^Q) = p^{n-1}$ if and only if $C_P(Q)$ is a cyclic subgroup of index $p$ in $P$.\end{Cor}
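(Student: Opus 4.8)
The plan is to read off the Loewy length directly from Theorems 7.1 and 7.3, organizing the argument by which of the four possible forms $C_P(Q)$ takes. First I would recall the classification recorded just before Theorem 7.1 and before Theorem 7.3: for a noncyclic $P$ of order $p^n$ with a cyclic subgroup of index $p$ and nonabelian (so $n\ge 3$), the possibilities for $C_P(Q)$ are exactly $P$, a cyclic subgroup $\lr{a}$ of index $p$, a group $\lr{a^p,c}$ (respectively $\lr{z,c}$) of order $p^{n-1}$ that is abelian with a cyclic subgroup of index $p$ but is \emph{not} cyclic, or a central cyclic subgroup ($\lr{a^p}$ or $\lr{z}$) of order $p^{n-2}$; and the values of $\ell\ell(kP^Q)$ in these four cases are $p^{n-1}+p-1$, $p^{n-1}$, $p^{n-2}+p-1$, $p^{n-2}$ respectively (for $p=2$ the first two collapse to $2^{n-1}+1$ and $2^{n-1}$, matching $p^{n-1}+p-1$ and $p^{n-1}$ with $p=2$). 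I also need the abelian noncyclic case, where $\ell\ell(kP^Q)=\ell\ell(kP)=p^{n-1}+p-1$ by~\cite{Koshitani}, and here necessarily $C_P(Q)=P$, i.e.\ $Q\le ZP=P$ automatically, which is consistent.

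For the first claim, I would argue both directions. If $Q\le ZP$ then $C_P(Q)=P$, so $kP^Q=kP$ and $\ell\ell(kP^Q)=\ell\ell(kP)=p^{n-1}+p-1$ by~\cite{Koshitani} (valid since $P$ is noncyclic). Conversely, suppose $\ell\ell(kP^Q)=p^{n-1}+p-1$. In the abelian case $C_P(Q)=P$ is forced. In the nonabelian case, compare with the tables: the value $p^{n-1}+p-1$ occurs only when $C_P(Q)=P$ (for $\text{Mod}_{p^n}$ this is the first line of Theorem 7.1; for $D_{2^n},SD_{2^n},Q_{2^n}$ the value $2^{n-1}+1=p^{n-1}+p-1$ occurs only in the first line of Theorem 7.3), since the other three values $p^{n-1}$, $p^{n-2}+p-1$, $p^{n-2}$ are all strictly smaller than $p^{n-1}+p-1$ (using $n\ge 3$, $p\ge 2$). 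Hence $C_P(Q)=P$, i.e.\ $Q\le ZP$.

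For the second claim, the same table lookup applies. If $C_P(Q)$ is a cyclic subgroup of index $p$ in $P$ then $P$ is nonabelian (an abelian $P$ has $C_P(Q)=P$, which is noncyclic here), so Theorems 7.1 and 7.3 give $\ell\ell(kP^Q)=p^{n-1}$ directly. Conversely, if $\ell\ell(kP^Q)=p^{n-1}$, then $P$ cannot be abelian (that would force the value $p^{n-1}+p-1>p^{n-1}$), so we are in the nonabelian case; among the four tabulated values, $p^{n-1}$ occurs only when $C_P(Q)=\lr{a}$, which is a cyclic subgroup of index $p$. The only subtlety to check is that the four values $p^{n-1}+p-1$, $p^{n-1}$, $p^{n-2}+p-1$, $p^{n-2}$ are pairwise distinct, so that the value determines the case: this is immediate since $p^{n-1}>p^{n-1}-1\ge p^{n-2}+p-1>p^{n-2}$ when $p\ge 2$ and $n\ge 3$ (the middle inequality is $p^{n-1}-1\ge p^{n-2}+p-1$, i.e.\ $p^{n-1}-p^{n-2}\ge p$, i.e.\ $p^{n-2}(p-1)\ge p$, which holds for $n\ge 3$; and when $n=3$, $p\ge 2$ gives $p(p-1)\ge p$). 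The main obstacle, such as it is, is purely bookkeeping: making sure the $\text{Mod}_{p^n}$ case (arbitrary $p$) and the $2$-group cases are both covered and that the numerical values genuinely separate the four group-theoretic configurations; there is no new mathematical content beyond Theorems 7.1 and 7.3 and the reference~\cite{Koshitani}.
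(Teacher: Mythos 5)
Your proof is correct and follows exactly the route the paper intends: the Corollary is stated as an immediate consequence of Theorems 7.1 and 7.3 (plus the abelian case from \cite{Koshitani}), and your argument is precisely the careful table lookup, including the check that the four Loewy-length values are pairwise distinct so that the value determines the case. One tiny descriptive slip that doesn't affect the argument: in the $D_{2^n}$, $SD_{2^n}$, $Q_{2^n}$ cases the central subgroup $\lr{z}$ has order $2$, not $p^{n-2}$; what matters and what you use correctly is that the Loewy length in that case is $2^{n-2}$.
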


\begin{remark}If $\psi$ is an automorphism of the $p$-group $P$ with $\psi(Q) = R$ for $Q,R \leq P$, then $kP^Q \simeq kP^R$ as $k$-algebras. Using this observation, one can show that for $P = \text{Mod}_{p^n}$ there are only 4 centralizer algebras $kP^Q$ that arise, up to $k$-algebra isomorphism, as $Q$ ranges across the subgroups of $P$. This behavior does not hold for $D_{2^n},SD_{2^n}$, or $Q_{2^n}$, since one may show that the number of centralizer algebras grows linearly in $n$. For example, if $P = D_{2^n}$ then we may take $Q_i = \lr{a^{2^i}}$ to obtain a centralizer algebra of dimension $\frac{1}{2}|P|+2^{i+1}$. It is in light of this that Theorem 7.3 is somewhat surprising.
\end{remark}

\section{Lowest Bounds on Loewy Length}

If $P$ is a group of order $p^n$ then it is known ~\cite{Wallace} that $\ell\ell(kP) \geq n(p-1)+1$, and in particular, $\ell\ell(kP) \geq p$. This lower bound also holds for centralizer algebras.

\begin{lemma}If $P$ is a $p$-group with subgroup $Q$ then $\ell\ell(kP^Q) \geq p$.
\end{lemma}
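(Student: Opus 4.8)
The plan is to locate a copy of $k[x]/(x^p)$ sitting inside $kP^Q$ whose distinguished nilpotent generator lies in $J(kP^Q)$. First I would dispose of the degenerate case $P = 1$, where the claim is vacuous, and otherwise observe that $C = C_P(Q)$ is a nontrivial $p$-group: indeed $Z(P) \leq C_P(Q)$ and $Z(P) \neq 1$. Hence $C$ contains an element $g$ of order $p$.

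Next I would record the chain of subalgebra inclusions $k\lr{g} \subseteq kC \subseteq kP^Q$. By the remark opening Section 2 (or by Proposition 2.1, which gives $J(kP^Q) = J(kC) \oplus I$) we have $g - 1 \in \mathrm{Ker}(\varepsilon) \cap kP^Q = J(kP^Q)$. The only bookkeeping point to verify is that powers of $g - 1$ formed inside $k\lr{g}$ coincide with those formed inside $kP^Q$, so that $(g-1)^m \in J^m(kP^Q)$ for every $m$; this is immediate since $k\lr{g} \hookrightarrow kP^Q$ is an inclusion of $k$-algebras and $J^m(kP^Q)$ is the $m$th power of the ideal $J(kP^Q)$ containing $g-1$.

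Finally, the isomorphism $k\lr{g} \cong k[x]/(x^p)$ sending $x$ to $g - 1$ shows $(g-1)^{p-1} \neq 0$ while $(g-1)^p = 0$. Therefore $J^{p-1}(kP^Q) \neq 0$, i.e.\ $\ell\ell(kP^Q) \geq p$. I expect no genuine obstacle here; the one thing to stay alert to is simply that $C_P(Q)$ is never trivial (it contains the center), so that an element of order $p$ is always available. In fact the same argument yields the stronger bound $\ell\ell(kP^Q) \geq \ell\ell(kC_P(Q))$, and feeding in Wallace's inequality for the group algebra $kC_P(Q)$ recovers the displayed statement together with a logarithmic improvement.
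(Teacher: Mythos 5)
Your argument is correct and rests on the same essential observation as the paper's proof: the subalgebra inclusion $kC_P(Q) \subseteq kP^Q$ satisfies $J(kC_P(Q)) \subseteq J(kP^Q)$, so radical powers of $kC_P(Q)$ bound those of $kP^Q$ from below, and $C_P(Q)$ is a nontrivial $p$-group. The paper expresses this by invoking the decomposition $J(kP^Q) = J(kC_P(Q)) \oplus I$ and citing $\ell\ell(kC_P(Q)) \geq p$, whereas you make the last step explicit by locating a cyclic subgroup $\lr{g} \leq C_P(Q)$ of order $p$ and computing $(g-1)^{p-1} \neq 0$ directly; this is a slightly more self-contained packaging of the same idea rather than a genuinely different route.
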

\begin{proof}
Write $kP^Q = kC \ltimes I$ so that $J(kP^Q) = J(kC) \oplus I$. Since $J^d(kC) \subseteq J^d(kP^Q)$ for all $d \geq 1$, we see that $\ell\ell(kP^Q) \geq \ell\ell(kC) \geq p$.
\end{proof}

The next proposition characterizes the centralizer algebras $kP^Q$ with minimal possible Loewy length; those for which $\ell\ell(kP^Q) = p$.

\begin{prop}If $Q \leq P$ then $\ell\ell(kP^Q) = p$ if and only if we have: $C_P(Q) = ZP$ is generated by an element $z$ of order $p$, and $zx \sim_Q x$ for all $x \in P-ZP$.
\end{prop}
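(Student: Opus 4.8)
The plan is to analyze when $\ell\ell(kP^Q) = p$ using the decomposition $kP^Q = kC \ltimes I$ with $J = J(kC) \oplus I$ and $J^d = J'^d \oplus L_d$ where $L_1 = I$ and $L_{d+1} = JL_d + L_dJ$, as in Proposition 6.4. Since $\ell\ell(kP^Q) \ge \ell\ell(kC) \ge \ell\ell(k[C/1])$, Lemma 8.1 gives $\ell\ell(kP^Q) \ge p$, and the condition $\ell\ell(kP^Q) = p$ forces $\ell\ell(kC) = p$ together with $L_p = 0$ (equivalently $J^{p-1}I + \cdots = 0$ inside $J^p$). The first reduction is to pin down when $\ell\ell(kC) = p$ for a $p$-group $C$: by Jennings' Theorem (the polynomial $q(t)$ of (\ref{q})), $\ell\ell(kC)$ equals $1 + \sum_n (p-1)\dim \kappa_n(C)/\kappa_{n+1}(C)$, which equals $p$ exactly when $\sum_n \dim \kappa_n(C)/\kappa_{n+1}(C) = 1$, i.e. $|C| = p$ and $\kappa_2(C) = 1$. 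So the "$C$-part" of the condition is precisely $C = C_P(Q)$ cyclic of order $p$; and since $ZP \le C_P(Q)$ always, this forces $C_P(Q) = ZP = \langle z\rangle$ with $|z| = p$. This handles the group-theoretic clause about $C_P(Q)$.

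Next I would analyze the ideal $I$ under the assumption $C = \langle z \rangle \cong \mathbb{Z}/p$. Here $C$ acts on the basis $\Omega$ by left multiplication, and for $\kappa_x \in \Omega$ the stabilizer $S_x = C \cap S_x^\dagger$ is a subgroup of $C$, hence either trivial or all of $C$. If $S_x = 1$ for some $\kappa_x$, then the orbit $C\kappa_x$ has size $p$ and $k\Omega_i \cong k[C/1] = kC$ as a $kC$-module for that orbit, so $J'^{p-1}k\Omega_i \ne 0$; since $J'^{p-1}k\Omega_i \subseteq L_p \subseteq J^p$, we get $J^p \ne 0$, contradicting $\ell\ell(kP^Q) = p$. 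Therefore $\ell\ell(kP^Q) = p$ forces $S_x = C$ for every $\kappa_x \in \Omega$, i.e. $z\kappa_x = \kappa_x$, which by definition of the $C$-action means $zx \sim_Q x$ for all $x \in P - ZP$. This establishes the necessity of both clauses.

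For sufficiency, assume $C_P(Q) = ZP = \langle z \rangle$ with $|z| = p$ and $zx \sim_Q x$ for all $x \in P - ZP$. Then $\ell\ell(kC) = p$ by Jennings, so $J'^p = 0$. The condition $zx \sim_Q x$ says exactly that $C$ fixes every element of $\Omega$, so $C$ acts trivially on $I$; hence $J'I = (z-1)I = 0$ and likewise $Iz = I$ gives $Iz = I$ so $IJ' = 0$. Therefore $L_{d+1} = JL_d + L_dJ = (J' \oplus I)L_d + L_d(J' \oplus I)$, and since $J'I = IJ' = 0$ we get $L_{d+1} = I^2 \cdot(\text{something})$ — more carefully, $L_2 = J I + I J = J'I + I^2 + IJ' + I^2 = I^2$ (as $J'I = IJ' = 0$), and inductively $L_d \subseteq I^d$; but $I$ is a nilpotent ideal, so $L_d = 0$ for $d$ large, and in fact I should check $L_2 = I^2 \subseteq$ ... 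The cleanest finish: since $J'I = IJ' = 0$, we have $J^2 = J'^2 \oplus I^2$, $J^3 = J'^3 \oplus I^3$, and generally $J^d = J'^d \oplus I^d$. To conclude $J^p = 0$ it remains to show $I^p = 0$; in fact I claim $I^2 = 0$ here. Indeed if $\kappa_x\kappa_y \ne 0$ its support lies in $P - C$ but is also $C$-fixed (as $\kappa_x\kappa_y$ is $C$-fixed and a nonnegative combination of basis elements $\kappa_w$), and one checks via a support/degree count against $|C| = p$ and $|\mathrm{Supp}(\kappa_w)|$ being a power of $p$ whether this is possible — I expect the precise bookkeeping here to be \textbf{the main obstacle}, and it is where I would use the structure-constant Lemma 3.3 or a direct counting argument in the spirit of the proof of Proposition 2.1 to force $I^2 = 0$, after which $J^p = J'^p \oplus I^p = 0$ and $\ell\ell(kP^Q) \le p$, hence $= p$ by Lemma 8.1.
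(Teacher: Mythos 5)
Your necessity direction is essentially the paper's argument and is correct: Lemma 8.1's proof together with $\ell\ell(kP^Q) = p$ forces $|C_P(Q)| = p$, hence $C_P(Q) = ZP = \lr{z}$ with $|z| = p$; and since $J'^{p-1}I \subseteq J^p = 0$ while $J'^{p-1} \neq 0$, any free $C$-orbit on $\Omega$ would give $J'^{p-1}k\Omega_i \neq 0$, so $S_x = C$ for every $\kappa_x \in \Omega$, i.e. $zx \sim_Q x$. (A small slip: Jennings gives $\ell\ell(kC) = 1 + (p-1)\sum_n n\,\dim\kappa_n/\kappa_{n+1}$, with the factor of $n$ you omitted; this does not affect the conclusion $\ell\ell(kC) = p \Leftrightarrow |C| = p$.)

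The sufficiency direction has a genuine gap, and it is exactly where you flag ``the main obstacle'': you correctly reduce to $I^2 = 0$, but you do not prove it, and the fallback heuristic you sketch does not work. Observing that $\kappa_x\kappa_y$ is $C$-fixed and supported in $P - C$ cannot force it to vanish, because under the standing hypothesis \emph{every} $\kappa_w \in \Omega$ is $C$-fixed (since $S_w = C$) and supported in $P - C$; so $C$-fixedness plus support gives no contradiction. The paper closes this gap with Lemma 3.3: the coefficient of $\kappa_z$ in $\kappa_x\kappa_y$ is $\mu_{xyz} = |q^{-1}S_{x^{-1}}^{\dagger}q \cap S_y^{\dagger}|$ when $z \sim_Q q^{-1}xqy$. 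Under the hypothesis $S_w = ZP$ for all $w \in P - ZP$, pick any $c \in ZP$ and write $c = [q_1,w]$; then for all $q_2 \in Q$ one has $c[q_2,w] = [q_2q_1,w] \in S_w^{\dagger}$ (using that $c$ is central), so left multiplication by $ZP$ preserves $S_w^{\dagger}$, hence acts freely on $q^{-1}S_{x^{-1}}^{\dagger}q \cap S_y^{\dagger}$. Thus $p \mid \mu_{xyz}$, so $\mu_{xyz} = 0$ in $k$ and $I^2 = 0$. Without this (or an equivalent) argument, the ``if'' direction of the proposition is not established.
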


\begin{proof}
Write $kP^Q = kC \ltimes I$ and $J' = J(kC)$. Assuming $\ell\ell(kP^Q) = p$, we see by the proof to Lemma 8.1 that $|C| = p$, and hence $ZP = C$ is generated by an element $z$ of order $p$. Moreover, since $J'^{p-1} \not= 0$ and $J'^{p-1}I = 0$ we conclude that $1 < S_x$ for all $\kappa_x \in \Omega$. So $S_x = ZP$ and hence $zx \sim_Q x$ whenever $x \in P-ZP$.\\

Assume that the conditions hold, and observe that they imply $J'I = IJ'$ since $C = ZP \lhd P$. Furthermore, $I \simeq k \times \cdots \times k$ as a $kC$-module since $S_x = C$ for $x \in P-C$, and hence $J'^dI^{p-d} = 0$ for $1 \leq d \leq p$. It remains to show that $I^p = 0$, and for this it suffices to show that $I^2 = 0$. By Lemma 3.3, if $\kappa_x,\kappa_y,\kappa_z \in \Omega$ then $\kappa_z$ occurs in $\kappa_x\kappa_y$ with nonzero coefficient $\mu_{xyz}$ only when $z \sim_Q q^{-1}xqy$ for some $q \in Q$, in which case $\mu_{xyz} = |q^{-1}S_{x^{-1}}^{\dagger}q \cap S_y^{\dagger}|$. For $c \in ZP$ and $w \in P-ZP$ there is $q_1 \in Q$ for which $c = [q_1,w]$. Thus, for all $q_2 \in Q$ we obtain $c[q_2,w] = [q_2q_1,w] \in S_w^{\dagger}$. In other words, left multiplication by $ZP$ leaves $S_w^{\dagger}$ invariant. In particular, $ZP$ acts semiregularly on $q^{-1}S_{x^{-1}}^{\dagger}q \cap S_y^{\dagger}$, and so $\mu_{xyz} = 0$. Thus, $I^2 = 0$ and $\ell\ell(kP^Q) = p$.
\end{proof}

\begin{remark}Theorem 4.1 implies the existence of infinitely many non-isomorphic $p$-groups $P$ for which $\ell\ell(ZkP) = p$. This stands in stark contrast with the case of groups algebras. More precisely, since $\ell\ell(kP) \geq n(p-1)+1$ whenever $|P| = p^n$, there are only finitely many groups satisfying $\ell\ell(kP) = d$ for any fixed $d$. Observe also, that if $P$ and $Q$ are $p$-groups for which $\ell\ell(ZkP) = \ell\ell(ZkQ) = p$, then by Proposition 8.2 we have $ZP \simeq ZQ$ and $\ell\ell(k(P * Q)) = p$ where $*$ denotes the central product of $P$ and $Q$.
\end{remark}

\section{Upper Bounds on Loewy Length}

Using the computations from section 7 we can derive precise upper bounds on $\ell\ell(kP^Q)$, in analogy with work done in ~\cite{Koshitani}, ~\cite{Motose}, and ~\cite{Motose2}.

\begin{thm}Let $P$ be a $p$-group of order $p^n$ with subgroup $Q$ and $k$ a field of characteristic $p$. Then the following hold.\\

\begin{enumerate}

\item If $Q \leq R \leq P$ then $\ell\ell(kP^R) \leq \ell\ell(kP^Q)$.\\

\item If $Q$ is noncentral then $\ell\ell(kP^Q) < \ell\ell(kP)$.\\

\item Either $\ell\ell(kP^Q) < p^{n-1}$ or $\ell\ell(kP^Q) \in \{p^{n-1},p^{n-1}+p-1,p^n\}$.\\

\item $\ell\ell(kP^Q) = p^n$ if and only if $P \simeq \mathbb{Z}_{p^n}$.\\

\item $\ell\ell(kP^Q) = p^{n-1}+p-1$ if and only if $P$ is noncyclic with a cyclic subgroup of index $p$ and $Q \leq ZP$.\\

\item $\ell\ell(kP^Q) = p^{n-1}$ if and only if $P \simeq \mathbb{Z}_2 \times \mathbb{Z}_2 \times \mathbb{Z}_2$; $P$ is the extra special group of 27 with exponent 3 and $Q \leq ZP$; or $P$ is noncyclic and $C_P(Q)$ is a cyclic subgroup of index $p$ in $P$.

\end{enumerate}

\end{thm}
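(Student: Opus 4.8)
The plan is to establish (1) and (2) first and then read off (3)--(6) from (2), Theorems 7.1 and 7.3, and the classical determination of $\ell\ell(kP)$ for $p$-groups. Part (1) is immediate: as $Q\le R$, every $R$-invariant element is $Q$-invariant, so $kP^R$ is a unital subalgebra of $kP^Q$; since $J(kP^H)=kP^H\cap\ker\varepsilon$ for $H=Q,R$, we get $J(kP^R)=kP^R\cap J(kP^Q)$, hence $J^d(kP^R)\subseteq J^d(kP^Q)$ for all $d$ and $\ell\ell(kP^R)\le\ell\ell(kP^Q)$.

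For (2) I would write $kP^Q=kC\ltimes I$ with $C=C_P(Q)$ and $J'=J(kC)$, and use the decomposition $J^d(kP^Q)=J'^d\oplus L_d$ from the proof of Proposition 6.4, where $L_d\subseteq I$ for all $d$. Combined with $J^d(kP^Q)\subseteq J^d(kP)$ (valid since $kP^Q\subseteq kP$ and $J(kP^Q)=kP^Q\cap J(kP)$), I would argue by contradiction: suppose $Q$ is non-central, so $C<P$, and $\ell\ell(kP^Q)=\ell\ell(kP)=:d$. Since $kP$ is symmetric and local, $J^{d-1}(kP)=\mathrm{Soc}(kP)=k\sigma$ with $\sigma=\sum_{x\in P}x$ is one-dimensional, so $0\ne J^{d-1}(kP^Q)\subseteq k\sigma$ forces $J^{d-1}(kP^Q)=k\sigma$. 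But in $kP=kC\oplus I$ the $I$-component of $\sigma$ is $\sum_i\kappa_{x_i}$, which is nonzero because $P\setminus C\ne\emptyset$; so the $L_{d-1}$-component of $\sigma$ is nonzero, whence $0\ne L_{d-1}\subseteq k\sigma$ gives $L_{d-1}=k\sigma$ and $\sigma\in I$ — impossible, as elements of $I$ have coefficient $0$ at $1\in C$. Thus $\ell\ell(kP^Q)<\ell\ell(kP)$.

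For (3)--(6) I would invoke the known facts (see \cite{Jennings,Koshitani,Motose,Motose2}) that for $|P|=p^n$ one has $\ell\ell(kP)=p^n$ iff $P$ is cyclic; $\ell\ell(kP)=p^{n-1}+p-1$ iff $P$ is noncyclic with a cyclic subgroup of index $p$; $\ell\ell(kP)=p^{n-1}$ iff $P\simeq\mathbb{Z}_2^3$ or $P$ is extra special of order $27$ and exponent $3$; and $\ell\ell(kP)<p^{n-1}$ otherwise. If $Q\le ZP$ then $kP^Q=kP$ and everything follows at once. If $Q$ is non-central, (2) gives $\ell\ell(kP^Q)<\ell\ell(kP)$, and also $\ell\ell(kP^Q)\le\dim kP^Q=|C|+|\Omega|\le|C|+\tfrac12(|P|-|C|)<p^n$, which already yields (4). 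If moreover $\ell\ell(kP^Q)\ge p^{n-1}$, then $\ell\ell(kP)>p^{n-1}$, so $\ell\ell(kP)\in\{p^{n-1}+p-1,p^n\}$; the value $p^n$ would make $P$ cyclic and $Q$ central, so $\ell\ell(kP)=p^{n-1}+p-1$ and $P$ is noncyclic with a cyclic subgroup of index $p$ — hence abelian (excluded, as then $C=P$) or one of $\text{Mod}_{p^n},D_{2^n},SD_{2^n},Q_{2^n}$. Theorems 7.1 and 7.3 then enumerate $\ell\ell(kP^Q)$ through $C_P(Q)$, and since $C\ne P$ the only listed value that is $\ge p^{n-1}$ is $p^{n-1}$ itself, attained exactly when $C_P(Q)$ is the cyclic subgroup of index $p$ (using $p^{n-2}+p-1<p^{n-1}$ for $n\ge3$). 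This gives (3), (5) and (6), the converse directions of (5), (6) being immediate from the quoted facts and Theorems 7.1, 7.3.

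I expect the main obstacle to be part (2): the point is that the failure of $\ell\ell(kP^Q)=\ell\ell(kP)$ is forced solely by the one-dimensionality of $\mathrm{Soc}(kP)$ together with the invisibility of $I$ at the identity, so that no finer group-theoretic data on $C_P(Q)$ enters. Everything else is either a direct subalgebra computation or the bookkeeping of comparing $p^{n-1}$, $p^{n-1}+p-1$, $p^n$ with the four values produced by Theorems 7.1 and 7.3; the only genuine external ingredient is the classical list of $p$-groups whose group algebra has Loewy length at least $p^{n-1}$.
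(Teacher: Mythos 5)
Your proposal is correct and follows essentially the same route as the paper: part (1) is the identical subalgebra argument, part (2) is the same idea (exploit $J^{d-1}(kP)=\mathrm{Soc}(kP)=k\sigma$ together with the decomposition $J(kP^Q)=J(kC)\oplus I$; the paper pins down the $kC$-component $\sigma_1=\sigma$ while you instead note that $\sigma_2\ne0$ forces $\sigma\in I$, both giving the same contradiction), and parts (3)--(6) are derived from (1), (2), the classical Koshitani--Motose classification, and Theorems~7.1/7.3 exactly as the paper does. The one small deviation is in (6): for the extra special group of order $27$ and exponent $3$ the paper invokes the Poincar\'e polynomial of Theorem~4.1 to show $\ell\ell(kP^Q)\le 9$ with equality only when $C_P(Q)=P$, whereas you simply apply part (2) to exclude noncentral $Q$ directly --- a slightly cleaner route to the same conclusion.
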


\begin{proof}
If $Q \leq R$ then $kP^R \subseteq kP^Q$ and so $J(kP^R) = J(kP^Q) \cap kP^R \subseteq J(kP^Q)$, thus establishing (1). Suppose that $\ell\ell(kP^Q) = \ell\ell(kP)$, and write $d+1 = \ell\ell(kP)$. Since $J^i(kP) = \mbox{Soc}^{d+1-i}(kP)$ for all $i$, we see that $J^d(kP)$ is 1-dimensional and contains $\sigma = \sum_{x \in P} x$. Then $\sigma \in J^d(kP^Q)$ since $0 \not= J^d(kP^Q) \subseteq J^d(kP)$. Using $J(kP^Q) = J(kC_P(Q)) \oplus I$, we can write $\sigma = \sigma_1 + \sigma_2$ where $\sigma_1 \in J^d(kC_P(Q))$ and $\mbox{Supp}(\sigma_2) \subseteq P-C_P(Q)$. Since $\mbox{Supp}(\sigma_1) \subseteq C_P(Q)$, we see that $\sigma_1 = \sum_{x \in C_P(Q)} x$. But since $J(kC_P(Q)) = kC_P(Q) \cap J(kP)$ we obtain $J^d(kC_P(Q)) \subseteq J^d(kP)$ and hence $\sigma_1 = \sigma$. This implies that $C_P(Q) = P$, and thus $Q \leq ZP$, contrary to our assumption on $Q$. We have thus established (2).\\

By (1) we know that $\ell\ell(kP^Q) \leq \ell\ell(kP)$ and $\ell\ell(kP) \leq p^n$ by application of Nakayama's Lemma. Moreover, if $\ell\ell(kP^Q) = p^n$ then $\ell\ell(kP) = p^n$, and by ~\cite{Motose} this implies that $P \simeq \mathbb{Z}_{p^n}$. The converse is obvious, thus establishing (4).\\

If $\ell\ell(kP^Q) < p^n$ then $P$ is not cyclic and hence $\ell\ell(kP^Q) \leq \ell\ell(kP) \leq p^{n-1}+p-1$ by ~\cite{Koshitani}. So if $\ell\ell(kP^Q) = p^{n-1}+p-1$ then $\ell\ell(kP) = p^{n-1}+p-1$, and hence $P$ contains an element of order $p^{n-1}$. Corollary 7.4 implies that $Q \leq ZP$; thus establishing (5).\\

If $\ell\ell(kP^Q) = p^{n-1}$ then $P$ is not cyclic and $\ell\ell(kP) \geq p^{n-1}$. By ~\cite{Koshitani} and ~\cite{Motose2} either $\ell\ell(kP) = p^{n-1}$ or $\ell\ell(kP) = p^{n-1}+p-1$. If $\ell\ell(kP) = p^{n-1}$ then $P \simeq \mathbb{Z}_2 \times \mathbb{Z}_2 \times \mathbb{Z}_2$ or $P$ is the extra special group of order 27 with exponent 3. In the latter case, if the Loewy series for $kP^Q$ has Poincar\'{e} polynomial $p(t)$, then $\ell\ell(kP^Q) = \deg p(t)+1$. In particular, Theorem 4.1 implies that $\ell\ell(kP^Q) \leq 2\log_3 |C_P(Q)|+3 \leq 9$ with equality precisely when $C_P(Q) = P$ and hence $Q \leq ZP$. If $\ell\ell(kP) = p^{n-1}+p-1$ then Corollary 7.4 yields $\ell\ell(kP^Q) = p^{n-1}$ precisely when $C_P(Q)$ is cyclic of index $p$ in $P$. This establishes (6) and completes (3).
\end{proof}

\begin{remark}It is natural to ask whether (2) of Theorem 9.1 generalizes in the following way: if $P$ is a $p$-group with subgroups $Q \leq R$ and $kP^R \subsetneq kP^Q$, must it follow that $\ell\ell(kP^R) < \ell\ell(kP^Q)$? This is not the case: take $P = D_{16}, Q = \lr{a^2}$, and $R = \lr{a}$, so that $\dim kP^R = 10, \dim kP^Q = 12$, and $\ell\ell(kP^R) = 8 = \ell\ell(kP^Q)$ by Theorem 7.3 and the remarks following Corollary 7.4.
\end{remark}

\section{Representation Type of $kP^Q$}

It is known that $kP$ has only finitely many non-isomorphic indecomposable modules precisely when $P$ is cyclic. Here we establish an analogous result for $kP^Q$. As before, write $J' = J(kC_P(Q))$ and $J = J(kP^Q)$. It is necessary to assume in this section that $k$ is algebraically closed. We first need the following lemma.

\begin{lemma}
\label{structure}
If $Q$ is a non-central subgroup of the $p$-group $P$ and we write $kP^Q = kC \ltimes I$, then $I \not\subseteq J^2$.
\end{lemma}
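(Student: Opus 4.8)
The plan is to show $I \not\subseteq J^2$ by exhibiting a basis element $\kappa_x \in \Omega$ that cannot lie in $J^2 = J'^2 \oplus L_2$, where (in the notation of Proposition 6.4) $L_2 = J I + I J$. The natural strategy is to pick $x \in P - C_P(Q)$ so that $x$ is, in a suitable sense, ``as central as possible'' — concretely, so that $\kappa_x$ has as small an orbit under $C = C_P(Q)$ and as small a support as possible — and then argue that such a $\kappa_x$ is not a $k$-linear combination of products from $JI + IJ$. A convenient way to control this is to project: any element of $J'I$ has support in $P - C$, and writing $I = \bigoplus_j k\Omega_j$ as a $kC$-module, the component of $J'I$ in $k\Omega_j$ lies in $J' k\Omega_j$, the radical of the permutation module $k\Omega_j$. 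So the first reduction is: $\kappa_x \notin J^2$ provided $\kappa_x \notin J' k\Omega_{j(x)} + (\text{component of } I^2)$, where $\Omega_{j(x)}$ is the $C$-orbit of $\kappa_x$.

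First I would dispose of the contribution of $I^2$. If $\kappa_y \kappa_{y'}$ is a product of two basis elements, then by Lemma 3.3 the $\kappa_z$ appearing in it satisfy $z \sim_Q q^{-1} y q y'$; in particular $z$ lies in the coset $[P,Q]\,yy'$ (or at least in a union of $Q$-conjugates of such), so $\kappa_z$ has support contained in $[P,Q] \cdot yy'$. Thus the only way $\kappa_x$ itself can occur in some $I^2$ product is if $x \in [P,Q]\, yy'$ for suitable $y, y'$. This will let me choose $x$ with $x \notin [P,Q]^2 \cdot C$-type cosets, or more simply choose $x$ so that its image in $P/[P,Q]$ is nontrivial but not a product of two images of non-centralizing elements — but the cleanest route is probably to work modulo $[P,Q]$ entirely: pass to $\bar P = P/[P,Q]$, note $Q$ acts trivially there so every $\kappa_x$ maps to a single group element $\bar x$, and observe that $J^2(kP^Q)$ maps into $J^2(k\bar P)$ while $I$ maps onto the span of $\{\bar x : x \in P - C\}$ inside $k\bar P$.

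Then the main step is purely a statement about the group algebra $k\bar P$ of an abelian $p$-group $\bar P = P/[P,Q]$: I must find $x \in P - C$ whose image $\bar x \in \bar P$ is nonzero and lies in $J(k\bar P) \setminus J^2(k\bar P)$, i.e. $\bar x - 1 \notin J^2(k\bar P)$, equivalently $\bar x \notin \Phi(\bar P)$ — because by Jennings' theorem $J(k\bar P)/J^2(k\bar P)$ is spanned by the images of $\{g - 1 : g \in \bar P, g \notin \Phi(\bar P)\}$. So what is really needed is an element $x \in P - C_P(Q)$ with $x \notin \Phi(P)[P,Q]$. The hard part — and the one genuine obstacle — is precisely this: ruling out the degenerate possibility that $P - C_P(Q) \subseteq \Phi(P)[P,Q]$. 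I would handle it group-theoretically: $\Phi(P)[P,Q]$ is a proper subgroup of $P$ (if it equalled $P$ then $P = \Phi(P)[P,Q] = \Phi(P)$ forces $[P,Q] \not\subseteq \Phi(P)$, but $[P,Q] \subseteq [P,P] \subseteq \Phi(P)$, a contradiction, so in fact $[P,Q]\Phi(P) = \Phi(P) \subsetneq P$); and on the other hand $C_P(Q)$ is also proper since $Q$ is non-central. A $p$-group is never the union of two proper subgroups (indeed never the union of fewer than $p+1$ proper subgroups), so $P \ne C_P(Q) \cup \Phi(P)[P,Q]$, hence there exists $x \in P$ lying in neither — this $x$ is the one we want.

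Assembling: with such an $x$, $\kappa_x \in I$, and its image under $kP^Q \to k\bar P$ is $\bar x - $ (implicitly, as a basis element it maps to $\bar x$, but what matters is $\kappa_x - \varepsilon(\kappa_x)\cdot 1$... ) — more carefully, $\kappa_x \in \mathrm{Ker}\,\varepsilon$ and its image in $k\bar P$ is $|{}^Q x / C_Q(x)| \cdot \bar x$ reduced mod $p$; I'd instead track $\kappa_x$ against a single representative by noting all elements of $\mathrm{Supp}(\kappa_x)$ have the same image $\bar x$ in $\bar P$, and that this image, regarded in $J(k\bar P)/J^2(k\bar P)$ via $g \mapsto g - 1$, is nonzero because $\bar x \notin \Phi(\bar P)$. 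Since the algebra map $kP^Q \to k\bar P$ sends $J^2(kP^Q)$ into $J^2(k\bar P)$, if $\kappa_x$ were in $J^2(kP^Q)$ its image would be in $J^2(k\bar P)$, contradicting the nonvanishing just established. Therefore $\kappa_x \notin J^2(kP^Q)$, so $I \not\subseteq J^2(kP^Q)$, as claimed. I expect the write-up to be short once the ``$P$ is not a union of two proper subgroups'' observation is in place; that observation is the crux.
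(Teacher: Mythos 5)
Your final step does not go through. You propose to detect $\kappa_x \notin J^2(kP^Q)$ by pushing forward along the algebra map $\pi : kP^Q \to k\bar P$ where $\bar P = P/[P,Q]$, and you assert that the image of $\kappa_x$ is a nonzero class in $J(k\bar P)/J^2(k\bar P)$ because $\bar x \notin \Phi(\bar P)$. But this is not the image of $\kappa_x$: since $\kappa_x = \sum_{q \in Q/C_Q(x)} {}^q x$ and every ${}^q x$ has the same image $\bar x$ in $\bar P$, we get $\pi(\kappa_x) = |Q:C_Q(x)|\cdot\bar x$, and $|Q:C_Q(x)|$ is a power of $p$ exceeding $1$ precisely because $x \notin C_P(Q)$. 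So $\pi(\kappa_x) = 0$, and more generally $\pi$ annihilates all of $I$. The ``nonvanishing just established'' was a statement about $\bar x - 1$, not about $\pi(\kappa_x)$, and the two have been silently conflated. Since $\pi$ kills $I$, the map cannot distinguish elements of $I$ that lie in $J^2$ from those that do not, and your claimed contradiction evaporates. (You flagged the issue parenthetically in the draft but ``tracking $\kappa_x$ against a single representative'' is not a homomorphism and does not repair it.) The preparatory group theory — $\Phi(P)[P,Q] = \Phi(P) \subsetneq P$ and $C_P(Q) \subsetneq P$, hence $P$ is not their union, hence a suitable $x$ exists — is correct, but the existence of such $x$ is never actually put to use in a valid way.

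The paper avoids reducing to a quotient entirely and instead argues directly with Nakayama's Lemma. From $J = J' \oplus I$ one gets $J^2 = J'^2 \oplus (J'I + IJ' + I^2)$, so $I \subseteq J^2$ would force $I = (J'+I)I + IJ' = JI + IJ'$. Treating $I$ as a left $kP^Q$-module, Nakayama gives $I = IJ'$; treating $I$ as a right $kC$-module, Nakayama gives $I = 0$, contradicting $Q \not\leq ZP$. This sidesteps the need to exhibit any particular $\kappa_x$, and it sidesteps the obstruction that every algebra quotient of $kP^Q$ through which one might try to see $I$ either kills $I$ outright (as with $kP^Q \to kC$ or $kP^Q \to k\bar P$) or is no easier to analyze.
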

\begin{proof}
Since $J = J' \oplus I$ we have $J^2 = J'^2 \oplus (J'I + IJ' + I^2)$. If $I \subseteq J^2$ then $I = (J'+I)I + IJ' = JI + IJ'$. Considering $I$ as a left $kP^Q$-module, we see $I = IJ'$ by Nakayama's Lemma. Considering $I$ as a right $kC$-module, Nakayama's Lemma implies that $I = 0$. Thus $Q \leq ZP$, contrary to our assumption.
\end{proof}

\begin{thm}If $P$ is a $p$-group with subgroup $Q$ and $k$ is an algebraically closed field of characteristic $p$, then $kP^Q$ has finite representation type precisely when $P$ is cyclic.\end{thm}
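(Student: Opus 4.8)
The plan is to reduce the statement to the single invariant $\dim_k J/J^2$ together with the classical criterion, via separated quivers, for a radical-square-zero algebra to have finite representation type. The easy direction is immediate: if $P$ is cyclic then $P$ is abelian, so $Q \leq P = ZP$ and $kP^Q = kP \cong k[x]/(x^{|P|})$, which is uniserial and hence of finite representation type.

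For the converse I would prove the contrapositive: if $P$ is not cyclic, then $kP^Q$ has infinite representation type. First note that a quotient of an algebra of finite representation type is again of finite representation type, since an indecomposable module over the quotient is indecomposable over the algebra with the same endomorphism ring; so it suffices to show that $A := kP^Q/J^2$ has infinite representation type. Since $kP^Q$ is local with unique simple module $k$ (Section 2), $A$ has radical square zero, and its separated quiver consists of two vertices joined by $m := \dim_k J/J^2$ arrows. By the standard criterion (see \cite{BensonI}), $A$ is of finite representation type if and only if this quiver is a disjoint union of Dynkin diagrams; two vertices joined by $m \geq 2$ arrows form a quiver which is not such a union (for $m = 2$ it is the Kronecker quiver, already of infinite type). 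Hence it remains only to show $\dim_k J/J^2 \geq 2$ whenever $P$ is noncyclic.

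For this I would split into two cases. If $Q \leq ZP$ then $kP^Q = kP$, and by Jennings' theorem $\dim_k J/J^2 = \log_p |P/\Phi(P)| \geq 2$ because $P$ is not cyclic. If $Q$ is noncentral, write $kP^Q = kC \ltimes I$ with $C = C_P(Q)$, $J = J' \oplus I$, and $J' = J(kC)$ as in Proposition 2.1. Since $I$ is an ideal contained in the $k$-span of $\Omega$, we have $J'I, IJ', I^2 \subseteq I$ while $J'^2 \subseteq kC$, so the equality $J^2 = J'^2 \oplus (J'I + IJ' + I^2)$ respects the vector space decomposition $kP^Q = kC \oplus I$; consequently $\dim_k J/J^2 = \dim_k J'/J'^2 + \dim_k I/(I \cap J^2)$. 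Now $1 \neq ZP \leq C$ forces $J' \neq 0$, whence $\dim_k J'/J'^2 \geq 1$ by Nakayama's lemma, and Lemma \ref{structure} gives $I \not\subseteq J^2$, whence $\dim_k I/(I \cap J^2) \geq 1$. Therefore $\dim_k J/J^2 \geq 2$, as required.

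The routine ingredients — that finite representation type passes to quotients, the bimodule decomposition of $J^2$, and the two dimension estimates — all go through cleanly given Proposition 2.1 and Lemma \ref{structure}. The step I expect to be the main obstacle to set up correctly is the invocation of the separated-quiver criterion: one must confirm that it applies to the radical-square-zero algebra $A = kP^Q/J^2$ and read off its separated quiver accurately (two vertices, $m = \dim_k J/J^2$ arrows), noting that this is a disjoint union of Dynkin diagrams precisely when $m \leq 1$, since $m = 0$ gives two copies of $A_1$, $m = 1$ gives $A_2$, and $m = 2$ is already the Euclidean quiver $\widetilde{A}_1$.
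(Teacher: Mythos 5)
Your proof is correct and follows essentially the same route as the paper: reduce to showing $\dim_k J/J^2 \ge 2$ via the decomposition $kP^Q = kC \ltimes I$ and Lemma~\ref{structure}, then invoke a classical criterion for local radical-square-zero algebras. The only cosmetic differences are that you compute $\dim_k J/J^2$ directly as $\dim_k J'/J'^2 + \dim_k I/(I\cap J^2)$ (slightly cleaner than the paper's count of $\dim kP^Q - \dim J^2$), and you cite the separated-quiver criterion where the paper explicitly exhibits $k[x,y]/(x,y)^2$ as a quotient and cites Auslander; both are standard and equivalent here.
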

\begin{proof}
If $P$ is cyclic, then $kP^Q = kP$ has finite representation type. If $P$ is not cyclic and $Q \leq ZP$, then $kP^Q = kP$ has infinite representation type. So assume that $P$ is not cyclic and $Q$ is non-central. From $kP^Q = kC \ltimes I$ and the fact that $I$ is nilpotent we get

$$J'^2 \simeq J^2(kP^Q/I) = (J^2+I)/I \simeq J^2/J^2 \cap I$$

Using this result and counting dimensions yields

\begin{equation}\nonumber
\begin{split}
\dim kP^Q - \dim J^2 &= \dim kC + \dim I - \dim J'^2 - \dim J^2 \cap I \\
&= \dim kC - \dim J'^2 + \dim I - \dim J^2 \cap I \\
&\geq 2 + \dim I - \dim J^2 \cap I \\
&> 2
\end{split}
\end{equation}\nonumber

where the first inequality follows from the fact that $J'^2 \subsetneq J' \subsetneq kC_P(Q)$ since $|C_P(Q)| \ge 2$, and the second inequality follows from the assertion in Lemma 10.1 that $I \not\subseteq J^2$. Therefore, $\dim J^2 < \dim kP^Q-2$ and in particular $\dim J/J^2 \ge 2$.\\

Now $kP^Q$ is a basic algebra that is split over $k$ since $kP^Q/J \simeq k$. So we assign to $kP^Q$ its ordinary quiver $\mathcal{Q}$ as in ~\cite{Elements}. Hence $\mathcal{Q}$ is the directed graph with a single vertex $e_0$ (corresponding to the primitive idempotent $1$) with loops $\alpha : e_0 \rightarrow e_0$ indexed by a basis $\{x_{\alpha}\}$ of $J/J^2$. If $k\mathcal{Q}$ denotes the path algebra associated with $\mathcal{Q}$, then the map $\mathcal{Q} \rightarrow kP^Q$ given by $e_0 \mapsto 1$ and $\alpha \mapsto x_{\alpha}$ extends to an algebra homomorphism $k\mathcal{Q} \rightarrow kP^Q$. Moreover, this map is surjective with kernel contained in $R^2$ where $R$ is the arrow ideal of $k\mathcal{Q}$ generated by $\{\alpha\}$. Therefore $kP^Q/J^2 \simeq k\mathcal{Q}/R^2 \simeq k[X_1,\ldots,X_n]/(X_1,\ldots,X_n)^2$ where $n = \dim J/J^2 \ge 2$. In particular, the three dimensional algebra $\Lambda = k[x,y]/(x,y)^2$ is a homomorphic image of $kP^Q$. It is shown in ~\cite{Auslander} that $\Lambda$ has infinitely many non-isomorphic indecomposable modules. The same must therefore be true for $kP^Q$, thus establishing the result.
\end{proof}

Unfortunately, it is not clear how to modify this argument to settle the more subtle question of when $kP^Q$ has wild type and when it has tame type.

\section{Open Questions}

In computing $J^d(kP^Q)$ we often took for granted condition (*), and used condition (**) whenever it held. As remarked after Corollary 3.2, condition (*) appears to be quite mild. It would be nice to have a justification of this observation beyond Corollary 3.2. For instance, computations suggest that it holds whenever $|Q|^2 > |P|$ or $|C_P(Q)|^2 < |P|$. On the other hand, condition (**) does not appear to hold in most cases. Thankfully, we were able to work around this obstacle as in section 7.  Corollary 3.5 provides a criterion for detecting when (**) fails. Interestingly, of the 7,347 many pairs $(P,Q)$ with $|P| = 2^6$ where (**) fails, precisely 5,588 many of them satisfy the hypotheses of Corollary 3.5.\\

On a related note, it is natural to try to generalize the result from section 5 on the symmetry of the Loewy series of $kP^Q$ without the restriction that (**) holds. As a clue, it appears that if symmetry holds, then $\ell\ell(kP^Q)-\ell\ell(kC_P(Q)) = 0,1$. This is easy to prove if $|C_P(Q)| = 2$ for instance. At the very least, it would be interesting to verify that symmetry can only occur when $p = 2, 3$.\\

In a different direction, a more detailed analysis in Proposition 6.4 shows that if $\ell\ell(kP^Q) \not= \ell\ell(kC_P(Q))$ then $\ell\ell(kP^Q) \leq |\Omega|$. It is suggested by computational evidence that if $\ell\ell(kP^Q) \not= \ell\ell(kC_P(Q))$ then in fact $\ell\ell(kC_P(Q)) \leq |\Omega|-1$, with equality only for $p = 2$ and the examples mentioned after Theorem 7.3. Such a result would provide an alternative form of the upper bounds provided in section 9.

\section*{Acknowledgements}

This paper is part of the author's PhD thesis at the University of Chicago. The author thanks Jon Alperin for his helpful supervision. The author also thanks the authors of MAGMA ~\cite{MAGMA}. This system was quite helpful in gaining insight into Loewy structure of $kP^Q$ as well as leading to the formulation of a number of our results.

\bibliographystyle{plain}
\bibliography{AAAbibfile}

\begin{thebibliography}{10}

\bibitem{AlperinLoewy}
J.~L. Alperin.
\newblock Loewy structure of permutation modules for {$p$}-groups.
\newblock {\em Quart. J. Math. Oxford Ser. (2)}, 39(154):129--133, 1988.

\bibitem{Aschbacher}
M.~Aschbacher.
\newblock {\em Finite group theory}, volume~10 of {\em Cambridge Studies in
  Advanced Mathematics}.
\newblock Cambridge University Press, Cambridge, second edition, 2000.

\bibitem{Elements}
Ibrahim Assem, Daniel Simson, and Andrzej Skowro{\'n}ski.
\newblock {\em Elements of the representation theory of associative algebras.
  {V}ol. 1}, volume~65 of {\em London Mathematical Society Student Texts}.
\newblock Cambridge University Press, Cambridge, 2006.
\newblock Techniques of representation theory.

\bibitem{Auslander}
Maurice Auslander, Idun Reiten, and Smal{\o}Sverre O.
\newblock {\em Representation theory of {A}rtin algebras}, volume~36 of {\em
  Cambridge Studies in Advanced Mathematics}.
\newblock Cambridge University Press, Cambridge, 1997.
\newblock Corrected reprint of the 1995 original.

\bibitem{BensonI}
David~J. Benson.
\newblock {\em Representations and cohomology. {I}}, volume~30 of {\em
  Cambridge Studies in Advanced Mathematics}.
\newblock Cambridge University Press, Cambridge, second edition, 1998.
\newblock Basic representation theory of finite groups and associative
  algebras.

\bibitem{MAGMA}
Wieb Bosma, John Cannon, and Catherine Playoust.
\newblock The {M}agma algebra system. {I}. {T}he user language.
\newblock {\em J. Symbolic Comput.}, 24(3-4):235--265, 1997.
\newblock Computational algebra and number theory (London, 1993).

\bibitem{CurtisReiner}
Charles~W. Curtis and Irving Reiner.
\newblock {\em Representation theory of finite groups and associative
  algebras}.
\newblock AMS Chelsea Publishing, Providence, RI, 2006.
\newblock Reprint of the 1962 original.

\bibitem{Ellers94}
Harald Ellers.
\newblock Cliques of irreducible representations of {$p$}-solvable groups and a
  theorem analogous to {A}lperin's conjecture.
\newblock {\em Math. Z.}, 217(4):607--634, 1994.

\bibitem{Ellers95}
Harald Ellers.
\newblock The defect groups of a clique, {$p$}-solvable groups, and {A}lperin's
  conjecture.
\newblock {\em J. Reine Angew. Math.}, 468:1--48, 1995.

\bibitem{Ellers00}
Harald Ellers.
\newblock Searching for more general weight conjectures, using the symmetric
  group as an example.
\newblock {\em J. Algebra}, 225(2):602--629, 2000.

\bibitem{Ellers04}
Harald Ellers and John Murray.
\newblock Block theory, branching rules, and centralizer algebras.
\newblock {\em J. Algebra}, 276(1):236--258, 2004.

\bibitem{Fine}
N.~J. Fine.
\newblock Binomial coefficients modulo a prime.
\newblock {\em Amer. Math. Monthly}, 54:589--592, 1947.

\bibitem{IsaacsGroups}
I.~Martin Isaacs.
\newblock {\em Finite group theory}, volume~92 of {\em Graduate Studies in
  Mathematics}.
\newblock American Mathematical Society, Providence, RI, 2008.

\bibitem{Jennings}
S.~A. Jennings.
\newblock The structure of the group ring of a {$p$}-group over a modular
  field.
\newblock {\em Trans. Amer. Math. Soc.}, 50:175--185, 1941.

\bibitem{Koshitani}
Shigeo Koshitani.
\newblock On the nilpotency indices of the radicals of group algebras of
  {$P$}-groups which have cyclic subgroups of index {$P$}.
\newblock {\em Tsukuba J. Math.}, 1:137--148, 1977.

\bibitem{Motose2}
Kaoru Motose.
\newblock On a theorem of {S}. {K}oshitani.
\newblock {\em Math. J. Okayama Univ.}, 20(1):59--65, 1978.

\bibitem{Motose}
Kaoru Motose and Yasushi Ninomiya.
\newblock On the nilpotency index of the radical of a group algebra.
\newblock {\em Hokkaido Math. J.}, 4(2):261--264, 1975.

\bibitem{Murray02}
John Murray.
\newblock Squares in the centre of the group algebra of a symmetric group.
\newblock {\em Bull. London Math. Soc.}, 34(2):155--164, 2002.

\bibitem{Murray04}
John Murray.
\newblock Generators for the centre of the group algebra of a symmetric group.
\newblock {\em J. Algebra}, 271(2):725--748, 2004.

\bibitem{Towers}
Matthew Towers.
\newblock Endomorphism algebras of transitive permutation modules for
  {$p$}-groups.
\newblock {\em Arch. Math. (Basel)}, 92(3):215--227, 2009.

\bibitem{Wallace}
D.~A.~R. Wallace.
\newblock Lower bounds for the radical of the group algebra of a finite
  {$p$}-soluble group.
\newblock {\em Proc. Edinburgh Math. Soc. (2)}, 16:127--134, 1968/1969.

\end{thebibliography}

\end{document}